\documentclass[12pt,a4paper,reqno]{amsart}
\usepackage[activeacute,english]{babel}

\usepackage[applemac]{inputenc}

\usepackage{amsfonts}
\usepackage{amsmath,amsthm,bm}
\usepackage{amssymb}
\usepackage{graphics}
\usepackage{tikz}
\usepackage{float}

\usepackage{hyperref}
\usepackage{cleveref}
\usepackage{autonum}

\DeclareRobustCommand{\SkipTocEntry}[5]{}

\textwidth15.4cm
\evensidemargin.6cm
\oddsidemargin.6cm

\newcommand{\CC}{{\mathcal C}}

\newcommand{\C}{{\mathbb C}}
\newcommand{\N}{{\mathbb N}}
\newcommand{\R}{{\mathbb R}}

\newcommand{\sph}{{\mathbb S}}

\newcommand{\dist}{{\operatorname{dist}}}
\newcommand{\lbop}{{\Delta}}

\newtheorem{theorem}{Theorem}[section]
\newtheorem{lemma}[theorem]{Lemma}
\newtheorem{corollary}[theorem]{Corollary}
\newtheorem{proposition}[theorem]{Proposition}

\newtheorem{remark}[theorem]{Remark}

\newtheorem{definition}[theorem]{Definition}

\numberwithin{equation}{section}

\allowdisplaybreaks[1]


\def\Xint#1{\mathchoice
{\XXint\displaystyle\textstyle{#1}}%
{\XXint\textstyle\scriptstyle{#1}}%
{\XXint\scriptstyle\scriptscriptstyle{#1}}%
{\XXint\scriptscriptstyle\scriptscriptstyle{#1}}%
\!\int}
\def\XXint#1#2#3{{\setbox0=\hbox{$#1{#2#3}{\int}$ }
\vcenter{\hbox{$#2#3$ }}\kern-.6\wd0}}

\def\dashint{\Xint-}


\date{}
\keywords{Minimal energy points, Spherical cap discrepancy, Sobolev discrepancy.}

\begin{document}

\title[Discrepancy of minimal Riesz energy points]
{Discrepancy of minimal Riesz energy points}

\author[J. Marzo]{Jordi Marzo}
\address{J. Marzo \newline
Departament de Matem\`atiques i Inform\`atica,
Barcelona Graduate School of Mathematics (BGSMath),
Universitat de Barcelona,
Gran Via 585,
08007 Barcelona, Spain}
\email{jmarzo@ub.edu}

\author[A. Mas]{Albert Mas}
\address{A. Mas \newline
Departament de Matem\`atiques,
Universitat Polit\`ecnica de Catalunya,
Campus Diagonal Bes\`os, Edifici A (EEBE), Av. Eduard Maristany 16, 08019
Barcelona, Spain}
\email{albert.mas.blesa@upc.edu}

\thanks{The authors are grateful to Joaquim Ortega-Cerd\`a and Carlos Beltr\'an for enlightening discussions on the subject matter of this paper, and to Alexandre Eremenko for information about Wolff's manuscript. 
\\
The first author has been supported by grant MTM2017-83499-P by
the Ministerio de Econom\'ia y Competitividad, Gobierno de Espa\~na and by the Generalitat de
Catalunya (project 2017 SGR 358). The second author was partially supported by
MTM2017-84214 and MTM2017-83499 projects of the MCINN (Spain),
2017-SGR-358 project of the AGAUR (Catalunya), and ERC-2014-ADG project 
HADE Id.\! 669689 (European Research Council).}

\begin{abstract}
We find upper bounds for the spherical cap discrepancy of the set of minimizers of the Riesz $s$-energy
on the sphere $\sph^d.$ Our results are based in bounds for a Sobolev discrepancy 
introduced by Thomas Wolff in an unpublished manuscript where estimates for the spherical cap discrepancy of the logarithmic energy minimizers in $\sph^2$ were obtained. 
Our result improves previously known bounds 
for $0\le s<2$ and $s\neq 1$ in $\sph^2,$ where $s=0$ is Wolff's result, and for 
$d-t_0<s<d$ with $t_0\approx 2.5$ when $d\ge 3$ and $s\neq d-1.$
\end{abstract}

\maketitle

\section{Introduction and main results}

For an $N$ point set $X_N=\{x_1,\dots ,x_N\}$ in the unit sphere $\sph^d=\{x\in\R^{d+1}:\,|x|=1\}$ and $0\le s<d$ the 
Riesz $s$-energy of $X_N$ is given by
\begin{equation} \label{s_energy}
E_s(X_N)=\sum_{i\neq j}\frac{1}{ |x_i-x_j|^s},\quad\text{if }0<s<d,
\end{equation}
and
\begin{equation}	\label{log_energy}
E_0(X_N)=\sum_{i\neq j}\log \frac{1}{ |x_i-x_j|},\quad\text{if }s=0.
\end{equation}
We denote the minimal 
Riesz $s$-energy, for $0\le s<d,$ achived by an $N$ point set 
by
\begin{equation} \label{min energy discrete}
\mathcal{E}_s(N)=\inf_{X_N} E_s(X_N),
\end{equation}
where $X_N$ 
runs through the $N$ point sets $X_N\subset \sph^d.$

Problems related to these minimal energies or with the minimizers, in the spherical and in other settings, have been
extensively studied. It is well known that the continuous Riesz $s$-energy of the surface measure on the sphere
gives the leading term of the asymptotic expansion of the normalized discrete energy.
Moreover, due to the work of different authors, see \cite{BHS12,BG15} and references therein, it is known that for $d\ge 2$ and $0<s<d$ 
there exist constants $C,\,c>0$ such that
\begin{equation}		\label{knownboundsenergy}
-c N^{1+s/d}\le \mathcal{E}_s(N)-E_s(\widetilde{\sigma}) N^2 
\le -C N^{1+s/d} 
\end{equation}
for $N\ge 2,$ where 
\begin{equation}  		\label{continuousrieszenergy}
E_s(\widetilde{\sigma})=\int_{\sph^d}\! \int_{\sph^d} \frac{1}{ |x-y |^s}\, 
d\widetilde{\sigma}(x)\,d\widetilde{\sigma}(y)=2^{d-s-1}
\frac{\Gamma\left( \frac{d+1}{2} \right)\Gamma\left( \frac{d-s}{2} 
\right)}{\sqrt{\pi}\Gamma\left( d-\frac{s}{2} \right)}
 \end{equation}
and $\widetilde{\sigma}$ is the normalized surface measure in $\sph^d$ given by the relation
$\sigma=\omega_d \widetilde{\sigma}$, being $\sigma$ the surface measure in the sphere and
$\omega_d=\sigma(\sph^d)=2 \pi^{\frac{d+1}{2}}/ \Gamma\left(\frac{d+1}{2} \right).$ 
For the logarithmic case $s=0$ it is known that
\begin{equation}			\label{knownboundsenergy2}
\mathcal{E}_0(N)=E_0(\widetilde{\sigma}) N^2 -\frac{1}{d}N\log N+O(N), 
\end{equation}
where \[
E_0(\widetilde{\sigma})= \int_{\sph^d}\! \int_{\sph^d} \log\frac{1}{|x-y|} 
\,d\widetilde{\sigma}(x)\,d\widetilde{\sigma}(y)= \frac{\psi_0(d)-\psi_0(d/2)}{2}-\log 2
\]
and $\psi_0$ denotes the digamma function.

Several conjectures about the lower order terms on these asymptotic expansions and, in some particular dimensions, about the value of the constants appearing 
in the asymptotic expansion can be found in the literature, see \cite{BMOC16,Bet14,BHS12,BG15,Ser15}.

There are still many basic open questions about the distribution of the minimizers. 
Recall that $X_N=\{x_1,\dots ,x_N\}\subset \sph^d$ is separated if  
$$\min_{i\neq j}|x_i-x_j|\ge C N^{-\frac{1}{d}},$$
for some constant $C>0,$  \cite{BG15}. The minimizers are known to be separated if $0\le s<2$ for $d=2$ and $d-2 \le s <d$ for $d\ge 3,$
but 
in $0\le s <d-2$ and $d\ge 3,$ 
the best bound is $O(N^{-\frac{1}{s+2}}),$
\cite{Dah78,DM05, DragnevSaff,KSS07}.

It is classical 
that, for any $0\le s<d,$ minimizers of the Riesz $s$-energy in $\sph^d$ are 
asymptotically uniformly distributed, meaning that
$$\lim_{N\to +\infty} \frac{1}{N}\sum_{j=1}^N f(x_j)=\int_{\sph^d}f(x)\,d\widetilde{\sigma}(x),\quad\mbox{for all } f\in \mathcal{C}(\sph^d),$$
or, equivalently, that the sum of delta measures $\mu_N=\frac{1}{N}\sum_{j=1}^N \delta_{x_j} $ converges in the weak-$*$ topology to the normalized surface measure 
$\widetilde{\sigma}.$ 
It is also a well known fact that the $N$ point sets $\{X_N\}_{N}$ are asymptotically uniformly distributed if and only if the spherical cap discrepancy
converges to zero
\[
\lim_{N\to +\infty} \sup_{x\in \sph^d,r>0} \Bigl| \frac{\# (X_N\cap D_r(x))}{N}- 
\widetilde{\sigma}(D_r(x)) \Bigr|=0,
\]
where $D_r(x)=\{y\in\sph^d:|x-y|<r\}$ is a spherical cap of center $x\in\sph^d$ and (euclidean) distance $r>0.$ 
Loosely speaking, the speed of this convergence is a measure of how well distributed are the points in $\{X_N\}_N .$ 

Our objective is to provide 
upper bounds for the spherical cap discrepancy of $N$ point sets of minimizers of the Riesz $s$-energy. Previous work around this
problem focused basically in the Coulomb potential case $s=d-1.$ This case is somehow simpler because, for $d\ge 2$, $|x|^{1-d}$ is (modulo a constant) the fundamental solution for the Laplacian in $\R^{d+1}$  
$$-\lbop (|x|^{1-d})=(d-1)\omega_d \delta_0,$$
where $\delta_0$ is a Dirac delta at the origin. Points which minimize the $(d-1)$-energy are called Fekete points and, in this 
setting, the results are tipically valid for regular enough $d-$dimensional surfaces in $\R^{d+1},$ not only for $\sph^d.$
The first result is due to Kleiner \cite{Klei64} and yields that the spherical cap discrepancy of a set of Fekete points on the 
sphere is $O(N^{-\frac{1}{3d}}).$ 
Sj\"ogren \cite{Sjo73} improved this result to $O(N^{-\frac{1}{2d}}).$
In 1996 Korevaar \cite{Kor96} conjectured that the right bound was $O(N^{-\frac{1}{d}})$.
Finally, G\"otz \cite{Got00} proved Korevaar's conjecture, up to a logarithmic factor, giving the best known result $O(N^{-\frac{1}{d}}\log N)$.

For all other cases, $s\neq d-1,$ the only known results are due to Brauchart, who established the bound
\begin{equation} \label{Brauchart_bound}
 O(N^{-\frac{d-s}{d(d-s+2)}}),\;\;\mbox{if}\;\;0\le s<d,
\end{equation}
see \cite{Bra05} for $0<s<d$ and \cite{Bra08} for the logarithmic case.
Observe that, in the harmonic case $s=d-1$, Brauchart's result gives a bound of the same order as Kleiner's.

However, for the logarithmic case in $\sph^2,$ Wolff proved in an unpublished manuscript \cite{Wolff} the bound 
$O(N^{-1/3})$,
which is better than the $O(N^{-1/4})$ following from Brauchart (\ref{Brauchart_bound}). For more information about Wolff's manuscript see Remark \ref{remark_historico}. Our objective in this work is to generalize 
 Wolff's approach to other $s$-energies in $\sph^d.$ 
In this regard, our main result is an upper bound for the spherical cap discrepancy of the 
energy minimizers that improves Brauchart's result in the range $0\le s<2$ for $d=2$
(where $s=0$ is Wolff's result) and in the range 
$d-t_0<s<d,$ for $d\ge 3,$ where
$t_0=\frac{1+\sqrt{17}}{2}>2.$ 
In all these cases, when $s=d-1$ G\"otz's mentioned result is still the best one.

\begin{theorem}				\label{teo:main}
Let $0\le s<d$ and $X_N=\{ x_1,\dots ,x_N \}$ be an $N$-point set of minimizers of the Riesz $s$-energy in $\sph^d$. Then
\begin{equation}
\sup_D \Bigl| \frac{\# (X_N\cap D)}{N}- 
\widetilde{\sigma}(D) \Bigr|
\lesssim 
\chi_{[0,d-2]}(s) N^{-\frac{2}{d(d-s+1)}}
+\chi_{(d-2,d)}(s)N^{-\frac{2(d-s)}{d(d-s+4)}}
\end{equation}
with constants depending only on $d$ and $s$, where the supremum on the left hand side runs over all spherical caps $D\subset \sph^d.$ 
\end{theorem}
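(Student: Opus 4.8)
The plan is to follow Wolff's strategy, decomposing the problem into two independent parts: a reduction from the spherical cap discrepancy to a smoother (Sobolev) discrepancy, and an energy estimate controlling that Sobolev discrepancy for minimizers.

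\smallskip

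\textbf{Step 1: Reduce spherical caps to a Sobolev discrepancy.}
The characteristic function of a spherical cap is not smooth enough to plug directly into an energy quadratic form, so I would first approximate $\chi_{D}$ by a mollified version at scale $\varepsilon$. Concretely, for a cap $D=D_r(x)$ one replaces the sharp indicator by a Lipschitz (or smoother) cutoff $\varphi_{D,\varepsilon}$ that equals $1$ on $D$, vanishes outside an $\varepsilon$-neighborhood, and interpolates with gradient $O(1/\varepsilon)$. The difference $\bigl|\tfrac1N\#(X_N\cap D)-\widetilde\sigma(D)\bigr|$ is then bounded by $\bigl|\int\varphi_{D,\varepsilon}\,d(\mu_N-\widetilde\sigma)\bigr|$ plus an error of size $O(\varepsilon)$ coming from the measure of the transition annulus (and a matching error if points fall in the annulus, which is again controlled by $\widetilde\sigma$ of a slightly larger annulus plus the smoothed discrepancy at a shifted scale — so one gets a two-sided estimate). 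The quantity $\sup_{\text{test functions}}\bigl|\int\varphi\,d(\mu_N-\widetilde\sigma)\bigr|$ with $\varphi$ ranging over cap-mollifiers at scale $\varepsilon$ is what I would call the Sobolev discrepancy, and the previous excerpt attributes to Wolff the key inequality bounding the cap discrepancy by this object (plus the $O(\varepsilon)$ loss). One then optimizes over $\varepsilon$ at the very end.

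\smallskip

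\textbf{Step 2: Bound the Sobolev discrepancy by the energy of the minimizer.}
This is where the Riesz kernel enters. The signed measure $\nu_N=\mu_N-\widetilde\sigma$ has total mass zero, and I would estimate $\int\varphi\,d\nu_N$ using the bilinear form associated with the kernel $|x-y|^{-s}$: expand $\varphi$ in spherical harmonics, use the known positive-definiteness of the Riesz kernel on the sphere, and write $\int\varphi\,d\nu_N$ via a Cauchy--Schwarz step against the energy-type norm $\|\nu_N\|^2 = \iint |x-y|^{-s}\,d\nu_N(x)\,d\nu_N(y)$, with the other factor being a Sobolev-type norm of $\varphi$. The minimality of $X_N$ is used precisely to control $\|\nu_N\|^2$: comparing $E_s(X_N)$ with $E_s(\widetilde\sigma)N^2$ via \eqref{knownboundsenergy} (and \eqref{knownboundsenergy2} in the logarithmic case), together with the removal of the diagonal, yields a bound of the form $\|\nu_N\|^2 \lesssim N^{-1-s/d}$ (up to log factors), after also using separation / spacing information for the minimizers in the regimes where the diagonal contribution is delicate. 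The Sobolev norm of the cap mollifier at scale $\varepsilon$ scales like a negative power of $\varepsilon$ determined by $s$ and $d$; combining this with $\|\nu_N\|$ gives $|\int\varphi\,d\nu_N| \lesssim \varepsilon^{-\beta(s,d)} N^{-(1+s/d)/2}$ for an explicit exponent $\beta$.

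\smallskip

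\textbf{Step 3: Optimize and split into the two ranges.}
Adding the mollification error $O(\varepsilon)$ from Step 1 to the energy bound $O(\varepsilon^{-\beta}N^{-(1+s/d)/2})$ from Step 2 and choosing $\varepsilon$ to balance the two terms produces the stated exponent; the two cases $s\in[0,d-2]$ and $s\in(d-2,d)$ appear because the control on $\|\nu_N\|^2$ — in particular how the diagonal terms and the available separation estimate interact with the continuous energy asymptotics — behaves differently across $s=d-2$, leading to the two different balancing exponents $\frac{2}{d(d-s+1)}$ and $\frac{2(d-s)}{d(d-s+4)}$. I expect the main obstacle to be Step 2 in the low-regularity regime $0\le s<d-2$ with $d\ge 3$: there the minimizers are only known to be separated at scale $N^{-1/(s+2)}$ rather than the optimal $N^{-1/d}$, so extracting the diagonal from $E_s(X_N)$ and turning the energy gap into a clean bound for $\|\mu_N-\widetilde\sigma\|^2$ requires care, and this weaker spacing is exactly what degrades the final exponent compared to the range $s>d-2$. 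A secondary technical point is making the mollifier regular enough that its relevant Sobolev norm (of fractional order tied to $s$) is finite and correctly estimated, which may require a $C^\infty$ rather than merely Lipschitz cutoff and a careful spherical-harmonic computation of the transition-layer contribution.
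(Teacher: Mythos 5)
Your overall strategy is the paper's (and Wolff's): mollify the cap indicator, estimate the smoothed discrepancy by duality against an energy/Sobolev norm, control that norm by the known asymptotics of $\mathcal{E}_s(N)$, and optimize in the mollification scale. But the central move in your Step 2 would not go through. You propose to Cauchy--Schwarz against $\|\nu_N\|^2=\iint|x-y|^{-s}\,d\nu_N(x)\,d\nu_N(y)$ with $\nu_N=\mu_N-\widetilde\sigma$ and $\mu_N$ the empirical measure; that double integral is $+\infty$ because of the atomic self-interactions, and after "removing the diagonal'' the resulting expression is no longer a positive quadratic form --- it equals $\mathcal{E}_s(N)/N^2-E_s(\widetilde\sigma)$, which is \emph{negative} for minimizers by \eqref{knownboundsenergy}, so it cannot serve as the right-hand side of Cauchy--Schwarz. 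The device that makes the argument work, and which your outline omits, is to replace each $\delta_{x_j}$ by the normalized indicator of a disc of radius $\epsilon N^{-1/d}$ (Definition~\ref{defi sobolev discrep mu}); the resulting signed density has finite energy, a Stolarsky-type identity (Lemma~\ref{lemma:decomposition}, Proposition~\ref{prop_decomposition}) decomposes $E_s(\mu_{X_N,\epsilon})$ into $\frac1{N^2}\sum_{i\neq j}\dashint_{D_i}\dashint_{D_j}R_s$, $E_s(\widetilde\sigma)$, and an explicit self-energy correction of size $\epsilon^{-s}N^{-1+s/d}$, and then Corollary~\ref{corollary_bounds_on_means} --- proved by a dyadic decomposition and separation for $d-2\le s<d$ (Theorem~\ref{teo_estimate_difference}) and by superharmonicity for $0\le s<d-2$ (Lemma~\ref{lemma_mean}) --- bounds the averaged sum by $\mathcal{E}_s(N)/N^2+O(N^{-2/d}+N^{-1+s/d})$. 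Only after this chain does the energy asymptotic give the Sobolev discrepancy bound. Note also that your exponent ``$N^{-1-s/d}$'' has a sign error (it should be $N^{-1+s/d}$), and, more importantly, you would miss the $N^{-2/d}$ contribution, which is the dominant one precisely when $s\le d-2$, so your scheme would produce a strictly worse exponent in that range.

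Your account of the $s=d-2$ dichotomy is also incorrect. It is not caused by the weaker separation bound $N^{-1/(s+2)}$ in the subcritical regime: the paper handles $0\le s<d-2$ entirely by superharmonicity (Lemma~\ref{lemma_mean}), which requires no separation at all, and the Sobolev discrepancy bound is actually \emph{better} ($N^{-1/d}$) there than for $s>d-2$. The two cases in the theorem arise from (i) the crossover between the two terms $N^{-1/d}$ and $N^{-1/2+s/(2d)}$ in Theorem~\ref{teo_sobolev_discrepancy}, which swap dominance exactly at $s=d-2$, and (ii) the integer part $[(d-s)/2]$ that appears in the interpolation Lemma~\ref{lemma:interpolation} and changes the power of $\epsilon$ in \eqref{almost there} when $s$ crosses $d-2$. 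A secondary point you flag (needing a genuinely smooth cutoff and a fractional-order Sobolev estimate for it) is real; the paper handles it via the interpolation between integer-order norms in Lemma~\ref{lemma:interpolation} rather than a direct fractional computation.
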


\begin{remark}{\em 
The same bound above holds when the discrepancy is defined in terms of the so called $K$-regular sets 
instead of the spherical caps, \cite{Sjo73}. 
}\end{remark}


%

Observe that, in the harmonic case $s=d-1,$ our result gives a bound between Kleiner's and Sj\"ogren's results. Note also that
all these results, ours and G\"otz's, are far from the optimal spherical cap discrepancy established by Beck for $N$ point 
sets in $\sph^d,$ which is of order $N^{-\frac{d+1}{2d}},$ up to a logarithmic term, \cite{Bec84}. 

Theorem \ref{teo:main} gives a quantitative proof of the asymptotic equidistribution of the energy minimizers. It would 
be interesting 
to extend our approach to Green energies on manifods like the ones studied in \cite{BCCdR19}.

Following Wolff's approach, Theorem \ref{teo:main} on the spherical cap discrepancy will follow from a sharp estimate 
of a discrepancy defined in terms of Sobolev norms. We will introduce now the needed concepts.


\subsection{Spherical harmonics and Sobolev discrepancy}

Given an integer $\ell\ge 0$, let $\mathcal{H}_\ell$ be the vector space of 
the spherical harmonics of degree $\ell$, i.e. the space of eigenfunctions
    of the the Laplace-Beltrami operator $\Delta$ on $\sph^{d}$, 
\begin{equation}\label{ev laplace}
-\Delta Y=\ell (\ell +d-1)Y,\;\;\;\; Y\in \mathcal{H}_{\ell},
\end{equation}
of eigenvalue $\ell (\ell +d-1).$ The value
$h_\ell=\operatorname{dim}\mathcal{H}_\ell,$ 
is the multiplicity of the eigenvalue $\ell (\ell +d-1)$ and
it is easily seen to be $h_\ell\approx \ell^{d-1}.$

For the Hilbert space $L^2(\sph^d)$ of square integrable functions in $\sph^d$
with the inner product
\[
\langle f,g \rangle=\int_{\sph^d} f(x) g(x)\,d\sigma(x),\;\;\; f,\,g\in L^2(\sph^d),
\]
one has the decomposition $L^2(\sph^d)=\bigoplus_{\ell\ge 0} \mathcal{H}_\ell.$ Therefore, for $f\in L^2(\sph^d)$, one has the Fourier representation
\begin{equation}\label{fourier representation f}
f=\sum_{\ell,k} f_{\ell,k} Y_{\ell,k},\qquad f_{\ell,k}=\langle f,Y_{\ell,k} \rangle=\int_{\sph^d} f\, Y_{\ell,k} \,d\sigma,
\end{equation}
where $\{ Y_{\ell,k} \}_{k=1}^{h_\ell}$ is an orthonormal basis of $\mathcal{H}_\ell.$

Given $r\ge 0$ we consider the standard $L^2(\sph^d)$-based Sobolev spaces of order $r$ defined in terms of their representation on the 
Fourier side, namely, 
$$\mathbb{H}^{r}(\sph^d)=\left\{ f\in L^2(\sph^d)\;\;:\;\; 
\sum_{\ell=0}^{+\infty}\sum_{k=1}^{h_\ell} (1+\ell^2)^{r} |f_{\ell,k}|^2<+\infty \right\},$$
with the norm
$$\| f \|_{\mathbb{H}^{r}(\sph^d)}=\left( \sum_{\ell=0}^{+\infty}\sum_{k=1}^{h_\ell} (1+\ell^2)^{r} |f_{\ell,k}|^2  \right)^{1/2}.$$ 
Since $(1+\ell^2)^{r}$, $(1+\ell^r)^{2}$, and  $1+\ell^{2r}$ are comparable for all $\ell\geq0$ with constants only 
depending on $r$, in the sequel we may use at our convenience any of these expressions to 
estimate the norm $\| \cdot \|_{\mathbb{H}^{r}(\sph^d)}$. Recall that $\mathbb{H}^{r}(\sph^d)$ is 
continuously embedded in $\mathcal{C}^k(\sph^d)$ if $ r-k >d/2$. 

For any Borel measure $\mu$ in $\sph^d$,
we consider a ``dual'' Sobolev norm of $\mu$ defined by
$$\| \mu \|_{\mathbb{H}^{-r}(\sph^d)}=\sup \left\{ \int_{\sph^d} \psi\,  d\mu  \;:\;\psi\in \mathcal{C}^{\infty}(\sph^d),\;\; \| \psi \|_{\mathbb{H}^{r}(\sph^d)}=1  \right\}.$$
When the measure is of the form $\mu=h\sigma$ for some $h\in L^2(\sph^d)$, by an abuse of notation we will simply write $\| h \|_{\mathbb{H}^{-r}(\sph^d)}$.

Following \cite{LPS86,Wolff}, we define the following discrepancy with respect to functions in the Sobolev space.

\begin{definition}\label{defi sobolev discrep mu}
  Let $X_N=\{x_1,\dots , x_N \}$ be a set of $N$ points on the sphere $\sph^d.$ Given $\epsilon>0$ and $0\le s<d,$ the Sobolev 
discrepancy of $X_N$ is
\begin{equation}
D_{s,d}^\epsilon(X_N)=  \| \mu_{X_N,\epsilon} \|_{\mathbb{H}^{\frac{s-d}{2}}(\sph^d)},
\end{equation}
where 
$$\mu_{X_N,\epsilon}= \Big(\frac{1}{N} \sum_{j=1}^N \frac{\chi_{D_j}}{\sigma(D_j)}-\frac{1}{\omega_d} \Big)\sigma,$$
and $D_j=D_{\epsilon N^{-1/d}}(x_j).$
\end{definition}

\begin{remark}\label{remark_1}
{\em 
In \cite{Wolff} Wolff considered a homogenous Sobolev norm instead. 
But both in the original work of Wolff and in the present article, these norms are used to pass from the spherical cap discrepancy 
(an $L^\infty$ estimate) to the Sobolev discrepancy (a ``dual'' Sobolev estimate), and then to the asymptotics of the energy. 
One can check in the proof of Theorem \ref{teo:main} below that
the zero order term, say $\|f\|_{L^2(\sph^d)}$, can be absorbed by the dominant term and our final conclusion completely agrees with the one of Wolff.
}\end{remark}

Our following result is an estimate of the Sobolev 
discrepancy of minimizers that is sharp in the range $d-2\le s<d.$

\begin{theorem}				\label{teo_sobolev_discrepancy}
Let $0\le s<d$ and $X_N=\{ x_1,\dots ,x_N \}$ be an $N$ point set of minimizers of the Riesz $s$-energy in $\sph^d$. Then, for every $\epsilon>0$ small enough depending only on $d$ and $s$, 
$$N^{-\frac{1}{2}+\frac{s}{2d}}\lesssim D_{s,d}^\epsilon(X_N)\lesssim N^{-\frac{1}{d}}+N^{-\frac{1}{2}+\frac{s}{2d}}$$
with constants depending only $d$, $s$, and $\epsilon$.
\end{theorem}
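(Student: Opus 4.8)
The plan is to prove the two inequalities in Theorem \ref{teo_sobolev_discrepancy} separately, linking the Sobolev discrepancy $D_{s,d}^\epsilon(X_N)$ to the Riesz $s$-energy asymptotics \eqref{knownboundsenergy}--\eqref{knownboundsenergy2} through the Fourier/spherical-harmonic description of the Riesz kernel. The key algebraic fact is that the Riesz kernel $|x-y|^{-s}$ (and $\log\frac1{|x-y|}$ for $s=0$) has a spherical-harmonic expansion whose Gegenbauer coefficients $c_\ell^{(s)}$ satisfy $c_\ell^{(s)}\approx \ell^{s-d}$ as $\ell\to\infty$ (with $c_\ell>0$ for $0\le s<d$), so that the $\mathbb H^{(s-d)/2}$ norm is, up to constants, the natural norm in which the Riesz energy is the squared norm. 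First I would record this: for a signed measure $\nu$ with $\nu(\sph^d)=0$, one has $\iint |x-y|^{-s}\,d\nu(x)\,d\nu(y)\approx \|\nu\|_{\mathbb H^{(s-d)/2}(\sph^d)}^2$, with both constants depending only on $d,s$; the zero-th coefficient is handled by the mean-zero condition and the zero-order term is harmless by Remark \ref{remark_1}.

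For the \emph{lower bound} $D_{s,d}^\epsilon(X_N)\gtrsim N^{-1/2+s/(2d)}$, I would apply the energy expansion directly. Write $\mu_N=\frac1N\sum_j\delta_{x_j}$ and $\nu_N=\mu_N-\widetilde\sigma$. Expanding $E_s(X_N)=\sum_{i\ne j}|x_i-x_j|^{-s}$ and comparing with $N^2\iint|x-y|^{-s}\,d\mu_N\,d\mu_N$ introduces the self-energy (diagonal) terms; since $X_N$ minimizes the energy, \eqref{knownboundsenergy} gives $N^2\iint|x-y|^{-s}\,d\mu_N\,d\mu_N \le E_s(\widetilde\sigma)N^2 - CN^{1+s/d} + (\text{diagonal correction})$. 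The diagonal correction for a minimizer is controlled by the separation estimate (minimizers are separated for $d-2\le s<d$, and for $0\le s<d-2$ the $O(N^{-1/(s+2)})$ bound suffices here) so that it is of lower order, and subtracting $E_s(\widetilde\sigma)N^2$ we obtain $\|\nu_N\|_{\mathbb H^{(s-d)/2}}^2 \lesssim N^{-1}$ — which is the wrong direction, so instead I would use the \emph{lower} bound in \eqref{knownboundsenergy}, namely $\mathcal E_s(N)\ge E_s(\widetilde\sigma)N^2 - cN^{1+s/d}$, to get $\iint|x-y|^{-s}\,d\nu_N\,d\nu_N \gtrsim N^{-1+s/d}$, hence $\|\nu_N\|_{\mathbb H^{(s-d)/2}}\gtrsim N^{-1/2+s/(2d)}$. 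Then $D_{s,d}^\epsilon(X_N)$ differs from $\|\nu_N\|_{\mathbb H^{(s-d)/2}}$ only by the smoothing $\chi_{D_j}/\sigma(D_j)$ versus $\delta_{x_j}$, which on the Fourier side multiplies the $\ell$-th block by a factor $1+O((\epsilon\ell N^{-1/d})^2)$ uniformly; since the relevant frequencies contributing to the norm are $\ell\lesssim N^{1/d}$, this smoothing changes the norm by at most a constant factor once $\epsilon$ is small, giving the stated lower bound.

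For the \emph{upper bound} $D_{s,d}^\epsilon(X_N)\lesssim N^{-1/d}+N^{-1/2+s/(2d)}$ I would split $\mu_{X_N,\epsilon}=\nu_N + (\mu_{X_N,\epsilon}-\nu_N)$. The second, "smoothing", term is a fixed-measure error: each $\delta_{x_j}-\chi_{D_j}\sigma/\sigma(D_j)$ has $\mathbb H^{(s-d)/2}$ norm $\lesssim (\epsilon N^{-1/d})$ times a harmless power, and averaging over $j$ (these are essentially orthogonal up to bounded overlap because the caps have radius $\asymp N^{-1/d}$) yields a contribution $\lesssim N^{-1/d}$; this is where the first term on the right-hand side comes from. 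For the main term $\|\nu_N\|_{\mathbb H^{(s-d)/2}}$, I use the upper estimate in \eqref{knownboundsenergy} together with the separation of minimizers to bound the diagonal-corrected energy from above by $E_s(\widetilde\sigma)N^2 - CN^{1+s/d} + O(N^{1+s/d})$... more carefully, one shows $\iint|x-y|^{-s}\,d\nu_N\,d\nu_N = N^{-2}E_s(X_N) - E_s(\widetilde\sigma) + N^{-2}(\text{self-energy})$, and since the self-energy of a separated $N$-point configuration is $\asymp N^{1+s/d}$ (for $0\le s<d$, from separation plus the obvious upper packing bound) while $N^{-2}\mathcal E_s(N)-E_s(\widetilde\sigma)\asymp -N^{-1+s/d}$, the cancellation leaves $\|\nu_N\|_{\mathbb H^{(s-d)/2}}^2\lesssim N^{-1+s/d}$, i.e. $\lesssim N^{-1/2+s/(2d)}$. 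Combining the two pieces gives the claimed upper bound.

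The main obstacle I anticipate is the careful handling of the diagonal/self-energy terms: passing from the discrete energy $E_s(X_N)=\sum_{i\ne j}$ to the continuous bilinear form $\iint d\mu_N\,d\mu_N$ requires excising the diagonal, and for $0\le s<d-2$ (where sharp separation is not available, only $O(N^{-1/(s+2)})$) one must verify that the resulting self-energy term is still of the right order $O(N^{1+s/d})$ — or at least small enough not to spoil the bound — which is exactly the reason the final spherical-cap estimate in Theorem \ref{teo:main} is weaker in that range. A secondary technical point is making the Fourier-multiplier comparison between $\delta_{x_j}$ and $\chi_{D_j}\sigma/\sigma(D_j)$ quantitative and uniform in $\ell$; this rests on standard estimates for the spherical-harmonic coefficients of a spherical cap indicator (its Fourier coefficients decay like those of a smoothed bump at scale $N^{-1/d}$) and on choosing $\epsilon$ small enough that the high-frequency tail $\ell\gtrsim N^{1/d}$ contributes negligibly to both $\|\nu_N\|_{\mathbb H^{(s-d)/2}}$ and $D_{s,d}^\epsilon(X_N)$.
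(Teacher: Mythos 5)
Your high-level intuition is right: the Sobolev norm $\|\cdot\|_{\mathbb{H}^{(s-d)/2}}$ is equivalent to the Riesz energy (this is Lemma \ref{lemma_equivalence}), and the lower and upper bounds on $D_{s,d}^\epsilon(X_N)$ should come respectively from the lower and upper bounds in \eqref{knownboundsenergy}--\eqref{knownboundsenergy2} on $\mathcal{E}_s(N)$. However, there is a genuine gap in the route you take, and it is fatal to the proposed proof.

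You propose to first estimate $\|\nu_N\|_{\mathbb{H}^{(s-d)/2}}$ for $\nu_N=\frac1N\sum_j\delta_{x_j}-\widetilde\sigma$, and then to treat the replacement of each $\delta_{x_j}$ by $\chi_{D_j}/\sigma(D_j)$ as a multiplicative perturbation on the Fourier side that changes the norm by a constant factor. But $\delta_{x_j}\notin\mathbb{H}^{-r}(\sph^d)$ unless $r>d/2$, and here $r=(d-s)/2\le d/2$ for all $0\le s<d$; so $\|\nu_N\|_{\mathbb{H}^{(s-d)/2}}=+\infty$. Equivalently, the ``diagonal correction'' you plan to absorb is $N\cdot R_s(x_j,x_j)$, which is infinite for every $s\ge 0$: separation controls $|x_i-x_j|$ for $i\ne j$ but gives no information about the diagonal, and the claim that ``the relevant frequencies contributing to the norm are $\ell\lesssim N^{1/d}$'' is precisely what fails --- the high-frequency tail is what makes $\|\nu_N\|$ diverge. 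Consequently the chain $\mathcal{E}_s(N)\to\|\nu_N\|\to D_{s,d}^\epsilon$ does not close; the smoothing cannot be appended at the end, it must be built in from the start. That is why the paper defines $D_{s,d}^\epsilon$ directly in terms of the cap-smoothed measure $\mu_{X_N,\epsilon}$, expresses $E_s(\mu_{X_N,\epsilon})$ exactly via the Stolarsky-type decomposition of Lemma \ref{lemma:decomposition} and Proposition \ref{prop_decomposition}, so that the self-energy appears as the finite cap self-energy $\frac1N\dashint_D\dashint_D R_s\approx\epsilon^{-s}N^{-1+s/d}$ rather than as a Dirac self-energy.

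A second ingredient that your sketch does not supply is the comparison between $\sum_{i\neq j}\dashint_{D_j}\dashint_{D_i}R_s$ and $\mathcal{E}_s(N)$. For the lower bound this is easy: $\mathcal{E}_s(N)\le\sum_{i\neq j}\iint R_s\,d\mu_i\,d\mu_j$ follows from averaging the discrete energy over the product probability measure $\prod_i\mu_i$ and using minimality of $X_N$. For the upper bound, however, one needs the reverse-type estimate of Corollary \ref{corollary_bounds_on_means}, namely $\frac1{N^2}\sum_{i\neq j}\dashint\dashint R_s\le\frac{\mathcal{E}_s(N)}{N^2}+C\epsilon^2(N^{-2/d}+N^{-1+s/d})$. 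This is where the paper invests most of its technical effort (Lemmas \ref{lemma_mean}, \ref{lm 32}, Proposition \ref{prop lbop through laplace main estimate ds1}, Theorem \ref{teo_estimate_difference}): for $d-2\le s<d$ it uses separation plus a dyadic annulus decomposition and Gauss--Green, and for $0\le s<d-2$, where sharp separation is unknown, it instead exploits the superharmonicity of $R_s$ near the diagonal (Lemma \ref{lemma_mean}). Your proposal replaces all of this by the heuristic ``self-energy of a separated configuration is $\asymp N^{1+s/d}$,'' which is both applied to the wrong (infinite) object and, even for the smoothed caps, would not give the error term $C\epsilon^2 N^{-2/d}$ that is the actual source of the extra $N^{-1/d}$ on the right-hand side of the theorem. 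Your attribution of the non-sharpness for $s<d-2$ to the weak separation bound is therefore also misplaced: the extra $N^{-1/d}$ comes from the $Cr^2$ error in the mean-value inequality, and in the subcritical regime no separation at all is used.
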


\begin{remark}{\em 
It is well known that the linearization of the quadratic Wasserstein distance is precisely the homogenous Sobolev norm considered by Wolff (see Remark \ref{remark_1}) \cite[7.6]{Vil03}. 
Moreover, Peyre has recently shown that the quadratic Wasserstein distance is bounded above by the the homogenous Sobolev norm and therefore Wolff's result can be read in terms of 
a quadratic Wasserstein discrepancy, \cite{Pey18}.  
}\end{remark}


\subsection{Notation}

Given $d\geq2$ integer, we denote by $\lbop$ and $\nabla$ the spherical Laplacian and spherical gradient on $\sph^d\subset\R^{d+1}$, respectively. 
Given $s\geq0$ and two different points 
$x,\,y\in\R^{d+1}$, let the Riesz kernel of order $s$ acting on $(x,y)$ be defined by
\begin{equation}
R_s(x,y):=
\begin{cases}
|x-y|^{-s},&\quad\text{for $s>0$,}\\
-\log|x-y|,&\quad\text{for $s=0$.}
\end{cases}
\end{equation}
For $0\leq s< d$ and $f\in L^2(\sph^d)$, we define the spherical Riesz transform of 
$f$ by
\begin{equation}
R_s f(x):=\int_{\sph^d}  R_s(x,y) f(y)\,d\sigma(y),\qquad\text{for } x\in\sph^d.
\end{equation}
We denote the Riesz $s$-energy of a Borel measure $\mu$ in $\sph^d$ by
\begin{equation}\label{riesz energy definition measure}
E_s(\mu):=\int_{\sph^d}\!\int_{\sph^d}  R_s(x,y) \,d\mu(y)\,d\mu(x).
\end{equation}
When $\mu=f\sigma$ with $f\in L^2(\sph^d)$, we write $E_s(f)$ instead of $E_s(f\sigma)$, hence
\begin{equation}\label{riesz energy definition function}
E_s(f)=\int_{\sph^d} f(x)\,R_sf(x)\,d\sigma(x)=\int_{\sph^d}\!\int_{\sph^d}  R_s(x,y) f(x)f(y)\,d\sigma(y)\,d\sigma(x).
\end{equation}

\subsection{Structure of the article}
Section \ref{s2} contains the preliminaries. In there, we set the basic properties of the Riesz kernels and the spherical Riesz transform, namely, we show that the operator $R_s$ diagonalizes in the standard basis of spherical harmonics, we find its eigenvalues in a closed form and their asymptotic behavior, and we study the relation between the Riesz kernels and the Laplace-Beltrami operator on the sphere, giving some heuristics.

Section \ref{s ass Riesz energies} focuses on asymptotic estimates of Riesz energies on the sphere. The main result is an estimate of the continuous Riesz energy of small discs centered at the discrete minimizers in terms of the minimal energy $\mathcal{E}_s(N)$. This, together with the asymptotic expansion of the minimal energy, is a key tool to derive the estimates of the Sobolev discrepancy given in Theorem \ref{teo_sobolev_discrepancy}, which is proven in Section \ref{s4}.

Finally, in Section \ref{s5} we give the proof of Theorem \ref{teo:main}, which is a straightforward application of  Theorem \ref{teo_sobolev_discrepancy} and Proposition \ref{sobolev_SCdiscrepancy}.


\section{Spectral analysis of the Spherical Riesz transform}\label{s2}

In this section we set basic properties of the Riesz kernels and the spherical Riesz transform. On one hand, we show that the operator $R_s$ diagonalizes in the standard basis of spherical harmonics. In addition, we find its eigenvalues in a closed form using hypergeometric functions, and we analyze their asymptotic behavior. This is the purpose of Proposition \ref{spherical ham diag}, a key result that will be systematically used in the sequel. In particular, it allows us to relate in Lemma \ref{lemma_equivalence} below the Riesz energies $E_s$ to the dual Sobolev norms $\| \cdot \|_{\mathbb{H}^{(s-d)/2}(\sph^d)}$ for $0\leq s<d$.

On the other hand, we explore the relation between the Riesz kernels and the Laplace-Beltrami operator on the sphere. Essentially, we show that the kernel 
$R_{s+2}$ can be obtained by applying the Helmoltz differential operator $-\Delta+C_{d,s}$ to the kernel $R_s$, where $C_{d,s}$ is a suitable constant depending on $d$ and $s$; see Lemma \ref{laplacian_of_riesz} for the precise statement. In the particular case of $s=d-2>0$, we get that $R_{d-2}$ is a multiple of a fundamental solution of $-\Delta+(d-2)d/4$. These identities are the departing point in Section \ref{s ass Riesz energies} to get the asymptotic estimates for Riesz energies.

Even though the asymptotic behavior of the eigenvalues for the spherical Riesz transform is obtained in 
Proposition \ref{spherical ham diag} for the whole range $0\leq s<d$ by a direct argument, it is of interest to see how one 
can get it in the subcritical regime ($0< s<d-2$) from its knowledge in the critical ($s=d-2$) and supercritical ($d-2<s<d$) 
regimes by an iteration argument based on the connection between $R_{s}$ and $R_{s+2}$ mentioned before. We develop this argument 
at the end of this section. This served us to see how, for every positive integer $m$, the Sobolev 
norms $\| \cdot \|_{\mathbb{H}^{m}(\sph^d)}$ defined in terms of the spherical harmonics decomposition correspond 
to the standard Sobolev norms given by pure derivatives, and gave us an intuition for extending Wolff's arguments for $(s,d)=(0,2)$ to the whole range $0\leq s<d$. These last considerations are treated in Lemma \ref{expressions in pure derivatives}.


\subsection{Fourier multipliers}

This part is devoted to show that the spherical harmonics diagonalize the spherical Riesz transform, and to find the asymptotic behavior of the eigenvalues. With this at hand, we find a simple expression in terms of the Fourier coefficients which serves to connect the Riesz energy to a dual Sobolev norm.

For the expression of the Riesz potential and the Riesz energy of a function $f\in L^2(\sph^d)$ written in terms of spherical harmonics, we recall the following definition of a generalized hypergeometric function.

\begin{definition}
For integers $p,\,q\ge 0$ and complex values $a_i,\,b_j$,
the generalized hypergeometric function is defined by the power series
\begin{equation}	\label{eq:qFp}
\,{}_pF_q(a_1,\ldots,a_p;b_1,\ldots,b_q;z) = \sum_{n=0}^\infty 
\frac{(a_1)_n\dots(a_p)_n}{(b_1)_n\dots(b_q)_n} \, \frac {z^n} {n!},
 \end{equation}
where $(\cdot)_n$ is the rising factorial or Pochhammer symbol given by $(x)_0=1$  and
\[
(x)_n=x(x+1)\cdots(x+n-2)(x+n-1)=\frac{\Gamma(x+n)}{\Gamma(x)},\quad n\geq1,
\] 
for $x\in\C$.
\end{definition}

\begin{proposition}\label{spherical ham diag}
Let $0\leq s<d$ and let $\{ Y_{\ell,k} \}_{\ell,k}$ for $\ell=0,1,\ldots$ and $k=1,\ldots h_\ell$ be an orthonormal basis of 
spherical harmonics in $L^2(\sph^d).$ 
Given $f\in L^2(\sph^d)$, we have
\begin{equation}\label{symbol Riesz1}
R_sf(x) =\int_{\sph^d}  R_s(x,y) f(y)\,d\sigma(y)=\sum_{\ell,k}A_{\ell,s}\,f_{\ell,k}\,Y_{\ell,k}(x)
\end{equation}
for almost all $x\in\sph^d$, and
\begin{equation}\label{symbol Riesz1bis}
E_s(f)
=\int_{\sph^d}\!\int_{\sph^d}  R_s(x,y) f(x)f(y)\,d\sigma(y)\,d\sigma(x)=\sum_{\ell,k}A_{\ell,s}\,|f_{\ell,k}|^2,
\end{equation}
where $f_{\ell,k}=\int_{\sph^d} f\,Y_{\ell,k}\,d\sigma$ and 
\begin{equation}\label{symbol Riesz2}
\begin{split}
A_{\ell,s} & =\frac{2^{d-s}\Gamma\left( \frac{d-s}{2} \right)}{\Gamma\left( d-\frac{s}{2} \right)}\;
{}_3 F_2 \Big(\!\!-\ell,\ell+d-1,\frac{d-s}{2};\frac{d}{2},d-\frac{s}{2};1\Big)
\\
&
=
\frac{2^{d-s}  \Gamma\left( \frac{d-s}{2} \right) \Gamma\left( \frac{s}{2}+\ell \right)  }{\Gamma\left(\frac{s}{2} \right) \Gamma\left(d-\frac{s}{2}+\ell \right) }.
\end{split}
\end{equation}
Additionally, there exists $C>0$ only depending on $s$ and $d$ such that
\begin{equation}\label{symbol Riesz3}
\frac{C^{-1}}{1+\ell^{d-s}}\leq A_{\ell,s}\leq\frac{C}{1+\ell^{d-s}}\qquad\text{for all }\ell\geq0.
\end{equation}
In particular,
\begin{equation}\label{symbol Riesz4}\begin{split}
E_s(f)\approx\sum_{\ell,k}\frac{1}{1+\ell^{d-s}}\,|f_{\ell,k}|^2.
\end{split}
\end{equation}
\end{proposition}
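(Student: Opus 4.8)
The plan is to derive everything from the classical Funk--Hecke formula, together with a Pfaff--Saalsch\"utz summation of a terminating $\,{}_3F_2$ and Stirling's asymptotics.

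First I would establish the diagonalization \eqref{symbol Riesz1}. Since $0\le s<d$, the kernel $R_s(x,y)$ depends only on $\langle x,y\rangle$ and belongs to $L^1(\sigma\times\sigma)$, so by the Schur test (the kernel is symmetric and $\int_{\sph^d}|R_s(x,y)|\,d\sigma(y)$ is finite and independent of $x$) the operator $R_s$ is bounded on $L^2(\sph^d)$; it is therefore enough to prove \eqref{symbol Riesz1} when $f$ is a finite linear combination of spherical harmonics and then pass to the $L^2$-limit (obtaining the identity for almost every $x$). For such $f$ the Funk--Hecke theorem furnishes, for each $\ell$, a scalar $A_{\ell,s}$ independent of $k$ with $R_sY_{\ell,k}=A_{\ell,s}Y_{\ell,k}$, which is exactly \eqref{symbol Riesz1}. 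Then \eqref{symbol Riesz1bis} follows by Parseval, since $E_s(f)=\langle R_sf,f\rangle$ by \eqref{riesz energy definition function}, and \eqref{symbol Riesz4} will be \eqref{symbol Riesz1bis} combined with \eqref{symbol Riesz3}.

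Next comes the closed form \eqref{symbol Riesz2}, which is the heart of the matter. The Funk--Hecke formula expresses $A_{\ell,s}$ as an explicit constant times $\int_{-1}^1 R_s^\sharp(t)\,C_\ell^{(d-1)/2}(t)\,(1-t^2)^{(d-2)/2}\,dt$, where $C_\ell^{(d-1)/2}$ is the ultraspherical polynomial and $R_s^\sharp(t)=(2-2t)^{-s/2}$, respectively $-\tfrac12\log(2-2t)$ when $s=0$. I would substitute $t=1-2u$, expand the polynomial through its terminating hypergeometric representation $C_\ell^{(d-1)/2}(1-2u)=\tfrac{(d-1)_\ell}{\ell!}\,{}_2F_1(-\ell,\ell+d-1;\tfrac d2;u)$, and integrate the resulting series term by term against $u^{(d-s-2)/2}(1-u)^{(d-2)/2}$ via the Beta integral; this produces precisely the $\,{}_3F_2$ at $1$ appearing in \eqref{symbol Riesz2}, with the stated $\Gamma$-prefactor. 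To reach the $\Gamma$-quotient I would observe that this $\,{}_3F_2$ is terminating and Saalsch\"utzian, since $1+(-\ell)+(\ell+d-1)+\tfrac{d-s}{2}=\tfrac d2+(d-\tfrac s2)$, and apply the Pfaff--Saalsch\"utz theorem with $n=\ell$, $a=\ell+d-1$, $b=\tfrac{d-s}{2}$, $c=\tfrac d2$; using the reflection identity $(x)_\ell=(-1)^\ell(1-x-\ell)_\ell$ to cancel the spurious factors $(-1)^\ell(\tfrac d2)_\ell$, the product of Pochhammer symbols collapses to $\tfrac{\Gamma(s/2+\ell)\,\Gamma(d-s/2)}{\Gamma(s/2)\,\Gamma(d-s/2+\ell)}$, which is \eqref{symbol Riesz2}. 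The logarithmic case $s=0$ I would treat either by repeating the computation with $R_0^\sharp$, or, more quickly, by differentiating the $s>0$ identity at $s=0^+$ (using $-\log|x-y|=\tfrac{d}{ds}|x-y|^{-s}\big|_{s=0}$ and that the constant kernel has vanishing eigenvalue on $\mathcal{H}_\ell$ for $\ell\ge1$), which yields $A_{\ell,0}=\tfrac{2^{d-1}\Gamma(d/2)\,\Gamma(\ell)}{\Gamma(d+\ell)}$ for $\ell\ge1$.

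The asymptotic estimate \eqref{symbol Riesz3} then follows from \eqref{symbol Riesz2}: there $A_{\ell,s}$ equals a positive constant depending only on $d$ and $s$ times $\Gamma(\ell+\tfrac s2)/\Gamma(\ell+d-\tfrac s2)$, and Stirling gives $\Gamma(\ell+\tfrac s2)/\Gamma(\ell+d-\tfrac s2)=\ell^{-(d-s)}(1+O(1/\ell))$ as $\ell\to\infty$; since this quantity is positive and finite for every $\ell\ge0$, the sequence $(1+\ell)^{d-s}A_{\ell,s}$ is bounded above and below by positive constants (for $s=0$ one uses the formula just obtained for $\ell\ge1$ and handles $\ell=0$ separately), which is \eqref{symbol Riesz3}; then \eqref{symbol Riesz4} is immediate. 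I expect the main obstacle to be the closed-form step: verifying that the Funk--Hecke integral is \emph{exactly} the Saalsch\"utzian $\,{}_3F_2$ above, with all numerator and denominator parameters and the normalizing constant correctly matched, and that its balance condition is precisely what makes Pfaff--Saalsch\"utz applicable; the degeneration of \eqref{symbol Riesz2} at $s=0$ is a secondary point that also needs care.
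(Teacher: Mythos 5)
Your proposal is correct and follows essentially the same route as the paper: Funk--Hecke to diagonalize, express the eigenvalue via the Gegenbauer integral, identify it with a terminating Saalsch\"utzian ${}_3F_2$, collapse via Pfaff--Saalsch\"utz, and finish with Stirling. The only substantive difference is that where the paper simply cites a table of integral transforms for the ${}_3F_2$ representation of the Gegenbauer integral, you carry out the derivation from scratch (substituting $t=1-2u$, using the ${}_2F_1$ form of $C_\ell^{(d-1)/2}$, and integrating term-by-term against the Beta density), and you add a Schur-test remark to justify passing from finite linear combinations of spherical harmonics to general $f\in L^2(\sph^d)$; both additions are sound and make the argument self-contained rather than changing it. Your treatment of $s=0$ by differentiating the identity at $s=0^+$ and the resulting formula $A_{\ell,0}=2^{d-1}\Gamma(d/2)\Gamma(\ell)/\Gamma(d+\ell)$ for $\ell\ge1$ also agrees with the paper.
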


\proof
We first consider the case $s>0$. If we set $F_s(t)=(2-2t)^{-s/2}$, then $R_s(x,y)=F_s(\langle x, y \rangle).$
By Funk-Hecke formula, see \cite[page 11]{DX13}, 
\begin{equation}
\begin{split}
\int_{\sph^d} F_s(\langle x, y \rangle) Y_{\ell,k}(x)\,d\sigma(x)=
\frac{\omega_{d-1} }{ C^{\frac{d-1}{2}}_\ell(1)}\left(\int_{-1}^1 F_s(t)C^{\frac{d-1}{2}}_\ell(t)(1-t^2)^{\frac{d-2}{2}} dt \right)   Y_{\ell,k}(y),
\end{split} 
\end{equation}
and the expression for $R_sf(x)$ follows if we set
$$A_{\ell,s}=\frac{\omega_{d-1} }{2^{s/2} C^{\frac{d-1}{2}}_\ell(1)}
 \int_{-1}^1 C^{\frac{d-1}{2}}_\ell(t)(1-t )^{\frac{d-s}{2}-1}(1+t )^{\frac{d-2}{2}} dt .$$
The expression for $E_s(f)$ follows then by orthogonality, that is,
$$\int_{\sph^d}\!\int_{\sph^d}  R_s(x,y) Y_{\ell,k}(x) Y_{\ell',k'}(y) \,d\sigma(x)\,d\sigma(y)=A_{\ell,s}\delta_{(\ell,k),(\ell',k')}.$$

From \cite[page 281]{EOT54} one can get a closed expression in terms of an hypergeometric function
\begin{equation}			\label{integral:closed:form}
 \begin{split}
&  \int_{-1}^1   C^{\frac{d-1}{2}}_\ell(t)(1-t )^{\frac{d-s}{2}-1}(1+t )^{\frac{d-2}{2}} dt 
\\
&\quad
=2^{d-\frac{s}{2}-1}
\frac{\Gamma\left( \frac{d-s}{2}\right) \Gamma\left(\frac{d}{2} \right) \Gamma\left(\ell+d-1 \right)  }{\Gamma\left(\ell+1 \right) 
\Gamma\left(d-1 \right) \Gamma\left(d-\frac{s}{2} \right) }
{}_3 F_2 \Big(\!\!-\ell,\ell+d-1,\frac{d-s}{2};\frac{d}{2},d-\frac{s}{2};1\Big).
 \end{split}
\end{equation}
From Saalsh\"utz's theorem we get
 $${}_3 F_2 \Big(\!\!-\ell,\ell+d-1,\frac{d-s}{2};\frac{d}{2},d-\frac{s}{2};1\Big)=
\frac{\Gamma\left( \frac{s}{2}+\ell \right) \Gamma\left(d-\frac{s}{2} \right) }{\Gamma\left(\frac{s}{2} \right) \Gamma\left(d-\frac{s}{2}+\ell \right) },$$
which yields \eqref{symbol Riesz2}.

Finally, the asymptotic expression for the quotient of gamma functions
$$\lim_{n\to +\infty} \frac{\Gamma(n+\alpha)}{\Gamma(n)n^\alpha}=1,\quad\alpha\in \C,$$
proves (\ref{symbol Riesz3}) because, clearly, $A_{\ell,s}\neq 0.$ 

The endpoint case $s=0$ is obtained using $F_0(t)=-\frac{1}{2}\log (2-2t)$ and taking the derivative with respect to $s$ and evaluating at $s=0$
the expression (\ref{integral:closed:form}).
\qed

\begin{remark}{\em
From Proposition \ref{spherical ham diag} it follows that, at a formal level,
\begin{equation}
 \begin{split}
 \frac{1}{|x-y|^s} & =\sum_{\ell,k}A_{\ell,s}Y_{\ell,k}(x)Y_{\ell,k}(y)= 
 \sum_{\ell}A_{\ell,s}  \sum_{k}Y_{\ell,k}(x)Y_{\ell,k}(y)
 \\
 &
 =\sum_{\ell}\frac{A_{\ell,s}}{\omega_d} \frac{2\ell +d-1}{d-1}\,C_\ell^{\frac{d-1}{2}}(\langle x, y\rangle),
 \end{split}
\end{equation}
where $\langle x, y\rangle$ is the cosine of the angle between $x$ and $y$, and 
$C_\ell^{\alpha}(t)$ is the Gegenbauer polynomial orthogonal in $[-1,1]$ with respect to $(1-t^2)^{\alpha-\frac{1}{2}}$ with the normalization  
$C_\ell^\alpha (1)=\binom{2\alpha+k-1}{k}$.
But as 
$$\frac{A_{\ell,s}}{\omega_d}=\frac{2^{d-s-1} \Gamma\left( \frac{d+1}{2} \right) \Gamma\left( \frac{d-s}{2} \right) \Gamma\left( \frac{s}{2}+\ell \right)  }{\sqrt{\pi}\Gamma\left(\frac{s}{2} \right) \Gamma\left(d-\frac{s}{2}+\ell \right) },$$
we get that 
$A_{\ell,d-1}=\omega_d (d-1)/(2\ell +d-1)$ and, thus,
\begin{equation}
 \frac{1}{|x-y|^{d-1}}
 =\sum_{\ell} C_\ell^{\frac{d-1}{2}}(\langle x, y\rangle).
\end{equation}
In particular, it is well known that for the Newtonian potential and $x,\,y\in \sph^2$  one has
\begin{equation}
  \frac{1}{|x-y|}=
\sum_{\ell} P_\ell (\langle x, y\rangle),
\end{equation}
where $P_\ell (t)$ is the Legendre polynomial of degree $\ell$ normalized as $P_\ell(1)=1.$
}\end{remark}

Using Proposition \ref{spherical ham diag}, in the following result we highlight the important connection between 
$\| \cdot \|_{\mathbb{H}^{(s-d)/2}(\sph^d)}$ and the Riesz $s$-energy $E_s$ introduced in \eqref{riesz energy definition measure}. 
\begin{lemma}					\label{lemma_equivalence}
Given $0\leq s<d$, there exists a constant $C>0$ only depending on $s$ and $d$ such that, for every $h\in L^2(\sph^d)$,
\begin{equation}\label{relation sobolev riesz}
C^{-1}\| h \|^2_{\mathbb{H}^{(s-d)/2}(\sph^d)}\leq E_s(h)
\leq C\| h \|^2_{\mathbb{H}^{(s-d)/2}(\sph^d)}.
\end{equation}
\end{lemma}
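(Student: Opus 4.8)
The plan is to deduce everything directly from Proposition~\ref{spherical ham diag}. Write $h\in L^2(\sph^d)$ in its spherical harmonics expansion $h=\sum_{\ell,k}h_{\ell,k}Y_{\ell,k}$. By \eqref{symbol Riesz1bis} we have the clean identity
\[
E_s(h)=\sum_{\ell,k}A_{\ell,s}\,|h_{\ell,k}|^2,
\]
so the whole lemma reduces to comparing the multipliers $A_{\ell,s}$ with the weights $(1+\ell^2)^{(s-d)/2}$ defining $\| \cdot \|_{\mathbb{H}^{(s-d)/2}(\sph^d)}$. Recall from Proposition~\ref{spherical ham diag} that $\mu=h\sigma$ is a measure of the special form $h\sigma$, so by the convention fixed just before Definition~\ref{defi sobolev discrep mu} the notation $\|h\|_{\mathbb{H}^{(s-d)/2}(\sph^d)}$ really does mean $\|h\sigma\|_{\mathbb{H}^{(s-d)/2}(\sph^d)}$; one should first check that for $h\in L^2$ this dual norm is computed on the Fourier side as $\big(\sum_{\ell,k}(1+\ell^2)^{(s-d)/2}|h_{\ell,k}|^2\big)^{1/2}$. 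This is a standard duality computation: testing against $\psi\in\mathcal C^\infty$ gives $\int_{\sph^d}\psi\,h\,d\sigma=\sum_{\ell,k}\psi_{\ell,k}h_{\ell,k}$, and maximizing over $\|\psi\|_{\mathbb{H}^{(d-s)/2}}=1$ (Cauchy--Schwarz with the weights $(1+\ell^2)^{\pm(d-s)/2}$, density of $\mathcal C^\infty$ in the Sobolev space) yields exactly that expression, with the value being $+\infty$ precisely when the sum diverges.

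Once this identification is in place, the two-sided bound follows immediately from the asymptotics \eqref{symbol Riesz3}: there is $C>0$, depending only on $s$ and $d$, with
\[
\frac{C^{-1}}{1+\ell^{d-s}}\leq A_{\ell,s}\leq\frac{C}{1+\ell^{d-s}}\qquad\text{for all }\ell\geq0,
\]
and since $1+\ell^{d-s}$ and $(1+\ell^2)^{(d-s)/2}$ are comparable uniformly in $\ell$ (with constants depending only on $d-s$, hence on $d$ and $s$), we get $A_{\ell,s}\approx (1+\ell^2)^{(s-d)/2}$. Multiplying by $|h_{\ell,k}|^2$ and summing over $\ell,k$ gives
\[
C^{-1}\sum_{\ell,k}(1+\ell^2)^{\frac{s-d}{2}}|h_{\ell,k}|^2\ \leq\ E_s(h)\ \leq\ C\sum_{\ell,k}(1+\ell^2)^{\frac{s-d}{2}}|h_{\ell,k}|^2,
\]
which is \eqref{relation sobolev riesz} after adjusting the constant. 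Note both sides are simultaneously finite or infinite, so the statement is meaningful for all $h\in L^2$ without a priori assuming $h\in\mathbb{H}^{(s-d)/2}$.

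I do not expect a serious obstacle here; the lemma is essentially a bookkeeping consequence of the spectral picture. The one point that deserves genuine care is the justification that the dual Sobolev norm of the measure $h\sigma$ admits the stated Fourier-side formula — in particular handling the $s=0$ (logarithmic) endpoint uniformly, and making sure the supremum over smooth test functions is attained in the limit (by truncating the Fourier series of the optimal $\psi$ and invoking density of $\mathcal C^\infty(\sph^d)$ in $\mathbb{H}^{(d-s)/2}(\sph^d)$). A secondary but purely cosmetic issue is the interchangeability of the various comparable expressions for the Sobolev norm — $(1+\ell^2)^r$ versus $1+\ell^{2r}$ — which the excerpt has already licensed us to use freely. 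With those remarks dispatched, the proof is a couple of lines.
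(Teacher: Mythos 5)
Your proof is correct. It rests on the same spectral input as the paper's — the diagonalization of $R_s$ in spherical harmonics and the multiplier asymptotics \eqref{symbol Riesz3} — so the substance is the same; the difference is purely in how the upper bound $E_s(h)\lesssim\|h\|^2_{\mathbb{H}^{(s-d)/2}}$ is extracted. You establish the exact Fourier-side identification
\[
\|h\|_{\mathbb{H}^{(s-d)/2}(\sph^d)}=\Big(\sum_{\ell,k}(1+\ell^2)^{\frac{s-d}{2}}|h_{\ell,k}|^2\Big)^{1/2},
\]
which requires exhibiting a near-optimal test function (truncations of $\psi_{\ell,k}\propto(1+\ell^2)^{-(d-s)/2}h_{\ell,k}$) and invoking density of $\CC^\infty(\sph^d)$ in $\mathbb{H}^{(d-s)/2}(\sph^d)$. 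The paper instead only uses the easy (Cauchy--Schwarz) direction of duality and gets the upper bound by pairing $h$ against $R_sh$ itself: it writes $E_s(h)=\int h\,R_sh\,d\sigma\le\|h\|_{\mathbb{H}^{(s-d)/2}}\|R_sh\|_{\mathbb{H}^{(d-s)/2}}$, computes $\|R_sh\|_{\mathbb{H}^{(d-s)/2}}\approx E_s(h)^{1/2}$, and divides through by $E_s(h)^{1/2}$, thereby avoiding the explicit construction of a near-optimal $\psi$. These are two faces of the same fact — the test function implicit in the paper's device is essentially your $\psi$ — but the paper's version is slightly more economical, while yours makes the dual Sobolev norm fully explicit and sidesteps the ``it suffices to treat $h$ smooth'' reduction the paper invokes. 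One small remark: for $h\in L^2$ and $s<d$ both sides of \eqref{relation sobolev riesz} are automatically finite, so your cautionary note about simultaneous infiniteness is vacuous here, though harmless.
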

\begin{proof}
By a limiting argument, it suffices to prove the lemma when $h$ is smooth. On one hand, in \eqref{symbol Riesz4} we showed that if $h=\sum_{\ell,k} h_{\ell,k} Y_{\ell,k}$ then 
\begin{equation}
E_s(h)\approx\sum_{\ell,k}\frac{1}{1+\ell^{d-s}}\,|h_{\ell,k}|^2.
\end{equation}
Writing every $\psi\in \mathcal{C}^{\infty}(\sph^d)$ as 
$\psi=\sum_{\ell,k} \psi_{\ell,k} Y_{\ell,k}$, by Cauchy-Schwarz inequality we have
\begin{equation}
\begin{split}
\Big|\int_{\sph^d} \psi h\,d\sigma\Big|^2
&=\Big|\sum_{\ell,k}h_{\ell,k}\psi_{\ell,k}\Big|^2
\leq\Big(\sum_{\ell,k}\frac{|h_{\ell,k}|^2}{1+\ell^{d-s}}\Big)
\Big(\sum_{\ell,k}({1+\ell^{d-s}})|\psi_{\ell,k}|^2\Big)\\
&\approx E_s(h)\| \psi \|^2_{\mathbb{H}^{(d-s)/2}(\sph^d)},
\end{split}
\end{equation}
which yields the first inequality in \eqref{relation sobolev riesz} by taking the supremum on $\psi\in \mathcal{C}^{\infty}(\sph^d)$.

Let us now prove the second inequality. Since $h$ is smooth by assumption, it is not hard to see that indeed $R_sh$ too. 
Thanks to \eqref{symbol Riesz1} and \eqref{symbol Riesz3} we have that
$R_sh=\sum_{\ell,k}A_{\ell,s}\,h_{\ell,k}\,Y_{\ell,k}$ with 
$A_{\ell,s}\approx(1+\ell^{d-s})^{-1}$. Hence, we can estimate
\begin{equation}
\begin{split}
E_s(h)&=\int_{\sph^d} h\, R_sh\,d\sigma
\leq\| h \|_{\mathbb{H}^{(s-d)/2}(\sph^d)}\| R_sh \|_{\mathbb{H}^{(d-s)/2}(\sph^d)}\\
&\approx\| h \|_{\mathbb{H}^{(s-d)/2}(\sph^d)}\Big( \sum_{\ell,k} (1+\ell^{d-s}) 
|A_{\ell,s}\,h_{\ell,k}|^2  \Big)^{1/2}
\approx\| h \|_{\mathbb{H}^{(s-d)/2}(\sph^d)}E_s(h)^{1/2},
\end{split}
\end{equation} 
and the second inequality in \eqref{relation sobolev riesz} follows.
\end{proof}


\subsection{The Laplace-Beltrami operator on the Riesz kernel}
In this section we give useful identities which connect the Riesz kernels of different indexes through the Laplace-Beltrami operator. All of them are collected in the following lemma, which will be systematically used in Section \ref{s ass Riesz energies} to get the asymptotic estimates for Riesz energies. 

\begin{lemma}								\label{laplacian_of_riesz}
Let $x_0\in\sph^{d}$. Then for $d\ge 2$ and $s>0,$ as a function of $x\in\sph^{d}$,
\begin{equation}\label{lbop through laplace eq1}
\Big(\!\!-\lbop + \frac{1}{4}\,s(2d-2-s)\Big)R_s(x,x_0)=s(d-2-s)R_{s+2}(x,x_0),
\quad \text{for all $x\neq x_0$.}
\end{equation}
Furthermore, in the special case of $d>2$ and $s=d-2$ we have
\begin{equation}\label{lbop through laplace eq2}
\Big(\!\!-\lbop + \frac{1}{4}\,(d-2)d\Big)R_{d-2}(\cdot,x_0)
=C_d (d-2)\delta_{x_0},
\end{equation}
in the sense of distributions, 
where $\delta_{x_0}$ denotes the Dirac measure at $x_0$ and 
$C_d=2{\pi^{d/2}}/{\Gamma\left(d/2\right)}.$ Indeed,
for every  open set $\Omega\subset\sph^d$ with smooth boundary and such that $x_0\in\Omega$, and every
$f\in \mathcal{C}^2(\Omega)\cap \mathcal{C}^1(\overline{\Omega})$, we have
\begin{equation}\label{lbop through laplace eq3}
\begin{split}
C_d (d-2)f(x_0)&=
\int_{\Omega}R_{d-2}(x,x_0)
\,\Big(\!\!-\lbop + \frac{1}{4}\,(d-2)d\Big)f(x)\,d\sigma(x)\\
&\quad+\int_{\partial\Omega}\!\! 
\big(R_{d-2}(x,x_0)\,\nabla f(x)-f(x)\,\nabla R_{d-2}(x,x_0)\big)
\!\cdot\! \nu(x)\,d\sigma'(x),
\end{split}
\end{equation}
where $\sigma'$ denotes the $(d-1)-$dimensional Hausdorff measure.

In the logarithmic case $s=0$ and $d>2$, we have
\begin{equation}			\label{log laplace eq1}			
-\Delta R_0 (x,x_0)=(d-2) R_2 (x,x_0)-\frac{d-1}{2},
\quad \text{for all $x\neq x_0,$}
\end{equation}
and, when $s=0$ and $d=2$,
\begin{equation}				\label{log laplace eq2}
-\Delta R_0(\cdot,x_0)=2\pi \delta_{x_0}-\frac{1}{2}
\end{equation}
in the sense of distributions. That is, for every open set $\Omega\subset\sph^2$ with smooth boundary and such that $x_0\in\Omega$, and every
$f\in \mathcal{C}^2(\Omega)\cap \mathcal{C}^1(\overline{\Omega})$, we have

\begin{equation}				\label{log laplace eq3}
\begin{split}
2\pi f(x_0)&=
-\int_{\Omega}R_{0}(x,x_0)
\Delta f(x)\,d\sigma(x) +\frac{1}{2}\int_\Omega f(x)\, d\sigma(x)   \\
&\quad+\int_{\partial\Omega}\!\! 
\big(R_{0}(x,x_0)\,\nabla f(x)-f(x)\,\nabla R_{0}(x,x_0)\big)
\!\cdot\! \nu(x)\,d\sigma'(x).
\end{split}
\end{equation}
\end{lemma}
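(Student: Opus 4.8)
The plan is to establish all the identities by reducing them to a single ODE/PDE computation for $R_s(\cdot,x_0)$ as a radial function of the geodesic distance $\theta=\theta(x,x_0)$ on $\sph^d$, and then handle the distributional corrections via Green's second identity. First I would write $t=\langle x,x_0\rangle=\cos\theta$, so that $|x-x_0|^2=2-2t=4\sin^2(\theta/2)$ and hence $R_s(x,x_0)=(2-2t)^{-s/2}=(4\sin^2(\theta/2))^{-s/2}$ for $s>0$, and $R_0(x,x_0)=-\tfrac12\log(2-2t)$. On the sphere the Laplace--Beltrami operator applied to a function depending only on $\theta$ is $\lbop g(\theta)=g''(\theta)+(d-1)\cot\theta\, g'(\theta)$. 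I would then plug $g(\theta)=(2\sin(\theta/2))^{-s}$ into this expression and simplify using $\cot\theta=\tfrac12(\cot(\theta/2)-\tan(\theta/2))$ and the identity $1/\sin^2(\theta/2)=2/(1-t)$; the point is that $g''+(d-1)\cot\theta\,g'$ comes out as a linear combination of $(2\sin(\theta/2))^{-s}$ and $(2\sin(\theta/2))^{-s-2}$, which are exactly $R_s$ and $R_{s+2}$. Matching coefficients gives \eqref{lbop through laplace eq1}. The logarithmic identity \eqref{log laplace eq1} follows the same way, either directly with $g(\theta)=-\log(2\sin(\theta/2))$ or by differentiating \eqref{lbop through laplace eq1} in $s$ at $s=0$ (using that $s(d-2-s)R_{s+2}\to (d-2)R_2$ and the $\tfrac14 s(2d-2-s)R_s$ term contributes the constant $-(d-1)/2$ in the limit after dividing appropriately); I would present whichever is cleaner, likely the direct computation.

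For the distributional statements \eqref{lbop through laplace eq2} and \eqref{log laplace eq2}, the idea is that when $s=d-2$ the coefficient $s(d-2-s)$ on the right of \eqref{lbop through laplace eq1} vanishes, so $(-\lbop+\tfrac14(d-2)d)R_{d-2}(\cdot,x_0)=0$ away from $x_0$, i.e.\ $R_{d-2}(\cdot,x_0)$ is a fundamental-solution-type object for the Helmholtz operator $-\lbop+\tfrac14(d-2)d$ on $\sph^d$. To identify the mass of the resulting distribution I would apply Green's second identity on $\Omega\setminus \overline{B_\rho(x_0)}$, where $B_\rho$ is a geodesic ball, integrate by parts twice, and let $\rho\to 0$: the interior Helmholtz term and the $\partial\Omega$ boundary term survive as written in \eqref{lbop through laplace eq3}, while the contribution of the small sphere $\partial B_\rho(x_0)$ converges to $C_d(d-2)f(x_0)$. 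The constant $C_d=2\pi^{d/2}/\Gamma(d/2)$ is precisely $\sigma'(\sph^{d-1})$, the $(d-1)$-volume of the unit $(d-1)$-sphere, and it emerges from the leading singularity $R_{d-2}(x,x_0)\sim |x-x_0|^{-(d-2)}\sim \rho^{-(d-2)}$ together with the normal derivative $\sim (d-2)\rho^{-(d-1)}$ integrated over $\partial B_\rho$ whose area is $\sim \sigma'(\sph^{d-1})\rho^{d-1}$; one checks that the $R_{d-2}\nabla f\cdot\nu$ part of that boundary integral is $O(\rho)$ and drops out. The case $d=2$, $s=0$ is identical with $R_0(x,x_0)\sim -\log|x-x_0|$, normal derivative $\sim \rho^{-1}$, and $\sigma'(\sph^1)=2\pi$, plus the extra constant $-\tfrac12$ coming from \eqref{log laplace eq1} specialized to $d=2$ (where $(d-2)R_2$ disappears), which accounts for the $+\tfrac12\int_\Omega f\,d\sigma$ term in \eqref{log laplace eq3}.

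The equations \eqref{lbop through laplace eq3} and \eqref{log laplace eq3} are then just the weak formulations of \eqref{lbop through laplace eq2} and \eqref{log laplace eq2} tested against $f$, and I would state them as a corollary of the Green's identity computation rather than reprove them; the only thing to be careful about is the regularity hypothesis $f\in\mathcal C^2(\Omega)\cap\mathcal C^1(\overline\Omega)$, which is exactly what is needed for the integration by parts on $\Omega\setminus\overline{B_\rho(x_0)}$ to be valid and for the boundary terms on $\partial\Omega$ to make sense.

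I expect the main obstacle to be purely computational rather than conceptual: carrying out the $\lbop g(\theta)=g''+(d-1)\cot\theta\, g'$ calculation for $g=(2\sin(\theta/2))^{-s}$ and correctly collecting the coefficients so that the constant $\tfrac14 s(2d-2-s)$ and the factor $s(d-2-s)$ come out exactly as claimed. A convenient way to avoid trigonometric bookkeeping is to work in the variable $u=1-t=|x-x_0|^2/2\in[0,2]$, in which $R_s=(2u)^{-s/2}$, and to express the radial Laplacian as a second-order operator in $u$ with coefficients polynomial in $u$ (using that $\lbop$ acting on a function of $t=\langle x,x_0\rangle$ on $\sph^d$ equals the Gegenbauer/Jacobi differential operator $(1-t^2)\partial_t^2 - (d+1)t\,\partial_t$ up to the right normalization); then the whole identity \eqref{lbop through laplace eq1} becomes an elementary polynomial identity in $u$ after substituting the power $u^{-s/2}$. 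The degenerate verification that the right-hand coefficient vanishes at $s=d-2$ is then immediate, and the $\rho\to0$ asymptotics for the mass are standard.
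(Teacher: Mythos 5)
Your proposal is correct but takes a genuinely different route from the paper for the key pointwise identity \eqref{lbop through laplace eq1}. The paper exploits the formula $\lbop f(x)=\big(\Delta_{\R^{d+1}}(f\circ\pi)\big)(\pi(y))$ with $\pi(y)=y/|y|$: it computes the ambient Laplacian of the $0$--homogeneous extension of $R_s(\cdot,x_0)$ directly in $\R^{d+1}$ and simplifies using $1-(x\cdot x_0)^2=\big(1+(x\cdot x_0)\big)|x-x_0|^2/2$. You instead work intrinsically, writing $R_s$ as a radial function $g(\theta)=(2\sin(\theta/2))^{-s}$ of the geodesic distance and using $\lbop g=g''+(d-1)\cot\theta\,g'$; carrying out that computation does give precisely $\lbop R_s=s(s+2-d)R_{s+2}+\tfrac{s}{4}(2d-2-s)R_s$, i.e.\ \eqref{lbop through laplace eq1}, and specializing to $g=-\log(2\sin(\theta/2))$ gives \eqref{log laplace eq1}. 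The two routes are equally standard; the ambient trick avoids memorizing the radial--Laplacian formula, while your version keeps everything intrinsic and makes the $u=|x-x_0|^2/2$ reduction to a polynomial identity quite transparent. For the distributional identities \eqref{lbop through laplace eq2}--\eqref{lbop through laplace eq3} and \eqref{log laplace eq2}--\eqref{log laplace eq3}, your Green's--identity argument on $\Omega\setminus\overline{B_\rho(x_0)}$ with $\rho\to0$ is essentially the same as the paper's (which removes the cap $D_\epsilon(x_0)$ and integrates by parts), including the identification of the constant $C_d=\sigma'(\sph^{d-1})$ from the normal derivative of $R_{d-2}$ on the small sphere and the observation that the $R_{d-2}\,\nabla f\cdot\nu$ boundary contribution is $O(\rho)$. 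One small correction: in your final paragraph the operator on functions of $t=\langle x,x_0\rangle$ should be $(1-t^2)\partial_t^2-dt\,\partial_t$, not $(1-t^2)\partial_t^2-(d+1)t\,\partial_t$; this is a side remark and does not affect the validity of the main computation in $\theta$.
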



\begin{proof}
Let $\Delta_{\R^{d+1}}$ denote the standard Laplacian in $\R^{d+1}$ and 
$\pi:\R^{d+1}\setminus\{0\}\to\sph^d$ be the spherical projection given by 
$\pi(y)=y/|y|$ for $y\in\R^{d+1}\setminus\{0\}$. 
It is well known that if $f:\sph^{d}\to\R$ then the spherical Laplacian of $f$ can be computed through $\Delta_{\R^{d+1}}$ by the formula
\begin{equation}\label{lbop through laplace}
\lbop f(x)=(\Delta_{\R^{d+1}}(f\circ\pi))(\pi(y))
\end{equation}
at the points $x=\pi(y)$ where $f\circ\pi$ is twice differentiable.

We will consider the case $s>0,$ the logarithmic case $s=0$ follows easily along the same lines.
Take $x_0\in\sph^{d}$. A computation shows that, 
as a function of $y\in\R^{d+1}\setminus\{0\}$, 
\begin{equation}
\begin{split}
\Delta_{\R^{d+1}} \Big(R_s&(\pi(\cdot),x_0)\Big)(y)\\
&=s\Big\{(s+2)R_{s+4}(\pi(y),x_0)f
\Big(\frac{|x_0|^2}{|y|^2}-\frac{(y\cdot x_0)^2}{|y|^4}\Big)
-dR_{s+2}(\pi(y),x_0)\frac{(y\cdot x_0)}{|y|^3}\Big\}.
\end{split}
\end{equation}
Using the definition of $R_s$, that $x=\pi(y)$ and that 
$|x_0|=|x|=1$, we then get
\begin{equation}
\begin{split}
\Delta_{\R^{d+1}} \Big(R_s&(\pi(\cdot),x_0)\Big)\big(\pi(y)\big)\\
&=s\Big\{(s+2)R_{s+4}(x,x_0)
\Big(1-(x\cdot x_0)^2\Big)
-dR_{s+2}(x,x_0)(x\cdot x_0)\Big\}\\
&=s|x-x_0|^{-s-2}\Big\{(s+2)\frac{1-(x\cdot x_0)^2}{|x-x_0|^{2}}
-d(x\cdot x_0)\Big\}.
\end{split}
\end{equation}
Observe that 
\begin{equation}\label{lbop through laplace eq2 interm 4}
1-(x\cdot x_0)^2=\big(1+(x\cdot x_0)\big)\big(1-(x\cdot x_0)\big)
=\big(1+(x\cdot x_0)\big)\frac{|x-x_0|^2}{2}.
\end{equation}
Therefore,
\begin{equation}
\begin{split}
\Delta_{\R^{d+1}} \Big(R_s(\pi(\cdot),x_0)\Big)&\big(\pi(y)\big)
=s|x-x_0|^{-s-2}\Big\{\frac{s+2}{2}\big(1+(x\cdot x_0)\big)
-d(x\cdot x_0)\Big\}\\
&=\frac{s}{2}|x-x_0|^{-s-2}\Big\{(2d-2-s)\big(1-(x\cdot x_0)\big)+2(s+2-d)\Big\}\\
&=\frac{s}{2}|x-x_0|^{-s-2}\Big\{(2d-2-s)\frac{|x-x_0|^2}{2}+2(s+2-d)\Big\}\\
&=\frac{s}{4}(2d-2-s)R_s(x,x_0)+s(s+2-d)R_{s+2}(x,x_0)
\end{split}
\end{equation}
which, together with \eqref{lbop through laplace}, yields 
\eqref{lbop through laplace eq1}. For the logarithmic case $s=0$, the previous computations lead to 
\begin{equation}
\begin{split}
\Delta_{\R^{d+1}} \Big(R_0(\pi(\cdot),x_0)\Big)\big(\pi(y)\big)
&=\frac{1}{2}|x-x_0|^{-2}\Big\{(2d-2)\frac{|x-x_0|^2}{2}+2(2-d)\Big\}\\
&=\frac{d-1}{2}+(2-d)R_{2}(x,x_0),
\end{split}
\end{equation}
which proves \eqref{log laplace eq1}.

We now address \eqref{lbop through laplace eq2}. Given $x_0\in\Omega\subset\sph^d$ and $\epsilon>0$ set 
$\Omega_\epsilon:=\Omega\setminus D_\epsilon(x_0)$, whose boundary is the disjoint union of $\partial\Omega$ and $\partial D_\epsilon(x_0)$ if $\epsilon$ is small enough. We denote by $\nu$ the outward unit normal vector (tangent to $\sph^d$) on $\partial\Omega_\epsilon$.
An integration by parts gives
\begin{equation}\label{lbop through laplace eq2 interm}
\begin{split}
\int_{\Omega_\epsilon}&R_{d-2}(x,x_0)
\,\Big(\!\!-\lbop + \frac{1}{4}\,(d-2)d\Big)f(x)\,d\sigma(x)\\
&=\int_{\Omega_\epsilon}\!\!\nabla R_{d-2}(x,x_0)\!\cdot\!\nabla f(x)\,d\sigma(x)
-\int_{\partial\Omega_\epsilon}\!\! R_{d-2}(x,x_0)\,\nabla f(x)
\!\cdot\!\nu(x)\,d\sigma'(x)\\
&\quad+\frac{1}{4}\,(d-2)d\int_{\Omega_\epsilon}R_{d-2}(x,x_0)f(x)\,d\sigma(x)\\
&=\int_{\partial\Omega_\epsilon}\!\! \nabla R_{d-2}(x,x_0)
\!\cdot\! \nu(x)\, f(x)\,d\sigma'(x)
-\int_{\partial\Omega_\epsilon}\!\! R_{d-2}(x,x_0)\,\nabla f(x)
\!\cdot\! \nu(x)\,d\sigma'(x)\\
&\quad+\int_{\Omega_\epsilon}
\Big(\!\!-\lbop + \frac{1}{4}\,(d-2)d\Big)R_{d-2}(x,x_0)\, f(x)\,d\sigma(x).
\end{split}
\end{equation}
Since $\dist(x_0,\Omega_\epsilon)>0$, the last term on the right hand side of 
\eqref{lbop through laplace eq2 interm} vanishes by \eqref{lbop through laplace eq1}. Note also that
\begin{equation}\label{lbop through laplace eq2 interm 2}
\begin{split}
\Big|\int_{\partial D_\epsilon(x_0)}\!\! R_{d-2}(x,x_0)\,\nabla f(x)&
\!\cdot\! \nu(x)\,d\sigma'(x)\Big|\\
&\leq \|\nabla f\|_{L^\infty(\Omega)}\epsilon^{-d+2}
\sigma(\partial D_\epsilon(x_0))=O(\epsilon).
\end{split}
\end{equation}
Arguing as in \eqref{lbop through laplace}, we have that 
$\nabla R_{d-2}(x,x_0)=-(d-2)R_{d}(x,x_0)((x\cdot x_0)x-x_0).$
Moreover,
for $x\in\partial D_\epsilon(x_0)$ the outward unit normal vector with respect to 
$\Omega_\epsilon\subset\sph^d$ is 
\begin{equation}\label{lbop through laplace eq2 interm 5}
\nu(x)=\frac{x_0-x-((x_0-x)\cdot x)x}{|x_0-x\cdot x-((x_0-x)\cdot x)x|}
=\frac{x_0-(x_0\cdot x)x}{|x_0-(x_0\cdot x)x|},
\end{equation}
which leads to
$\nabla R_{d-2}(x,x_0)\cdot\nu(x)
=(d-2)R_{d}(x,x_0)|x_0-(x_0\cdot x)x|$. Observe also that,
from \eqref{lbop through laplace eq2 interm 4}, 
$|x_0-(x_0\cdot x)x|^2=1-(x_0\cdot x)^2
=\frac{1}{2}\big(1+(x_0\cdot x)\big)|x-x_0|^2.$
Therefore,
\begin{equation}\label{lbop through laplace eq2 interm 6bis}
\begin{split}
\int_{\partial D_\epsilon(x_0)}\!\! \nabla R_{d-2}(x,x_0)
\!\cdot\! \nu(x)\, &f(x)\,d\sigma'(x)\\
&=\frac{d-2}{\epsilon^{d-1}}\int_{\partial D_\epsilon(x_0)}\!\!
\Big(\frac{1+(x_0\cdot x)}{2}\Big)^{1/2}f(x)\,d\sigma'(x).
\end{split}
\end{equation}
Since the integrand is continuous near $x_0$, we deduce that
\begin{equation}\label{lbop through laplace eq2 interm 3}
\lim_{\epsilon\to0}\int_{\partial D_\epsilon(x_0)}\!\! \nabla R_{d-2}(x,x_0)
\!\cdot\! \nu(x)\, f(x)\,d\sigma'(x)=C_d (d-2)f(x_0),
\end{equation}
where 
$$C_d:=\lim_{\epsilon\to0}\epsilon^{1-d}
\sigma'(\partial D_\epsilon(x_0))=\frac{2\pi^{d/2}}{\Gamma\left( \frac{d}{2}\right)}.$$
Finally, taking the limit $\epsilon\to0$ in \eqref{lbop through laplace eq2 interm} and using 
\eqref{lbop through laplace eq2 interm 2} and 
\eqref{lbop through laplace eq2 interm 3} we get 
\eqref{lbop through laplace eq3}. The statement in \eqref{lbop through laplace eq2} is a consequence of \eqref{lbop through laplace eq3} taking $\Omega=\sph^d$, thus $\partial\Omega=\emptyset$.

Observe that in the logarithmic case $\nabla R_{0}(x,x_0)=-R_{2}(x,x_0)((x\cdot x_0)x-x_0),$ and then
formula (\ref{lbop through laplace eq2 interm 6bis}) becomes
\begin{equation}\label{lbop through laplace eq2 interm 6bis bis}
\begin{split}
\int_{\partial D_\epsilon(x_0)}\!\! \nabla R_{0}(x,x_0)
\!\cdot\! \nu(x)\, f(x)\,d\sigma'(x)
&=\frac{1}{\epsilon}\int_{\partial D_\epsilon(x_0)}\!\!
\Big(\frac{1+(x_0\cdot x)}{2}\Big)^{1/2}f(x)\,d\sigma'(x).
\end{split}
\end{equation}
Hence, the same argument yields the case $d=2.$
\end{proof}



\subsection{From the supercritical to the subcritical regime through iteration} \label{ss super to sub}
By a direct argument, in Proposition \ref{spherical ham diag} we found the asymptotic behavior of the eigenvalues for the Spherical Riesz transform for the whole range $0\leq s<d$, see \eqref{symbol Riesz3}. However, it is of interest to see how one can get it in the subcritical regime from its knowledge in the critical and supercritical regimes by an iteration argument based on \eqref{lbop through laplace eq1}. This is the purpose of this section.

We begin by showing, directly from \eqref{lbop through laplace eq2}, the asymptotics \eqref{symbol Riesz3} in the critical regime  $0<s=d-2$.
Given $f\in L^2(\sph^d)$ 
set $f_{\ell,k}=\int f\,Y_{\ell,k}\,d\sigma$, hence \eqref{symbol Riesz1} gives
$R_sf=\sum_{\ell,k}A_{\ell,s}\,f_{\ell,k}\,Y_{\ell,k}.$
Thanks to \eqref{lbop through laplace eq2} and \eqref{ev laplace} we get
\begin{equation}\label{symbol Riesz frac to all eq1}
\begin{split}
\sum_{\ell,k}f_{\ell,k}\,Y_{\ell,k}&=f=\frac{1}{C_d (d-2)}
\Big(\!\!-\lbop + \frac{1}{4}\,(d-2)d\Big)R_{d-2}f\\
&=\frac{1}{C_d (d-2)}\Big(\!\!-\lbop + \frac{1}{4}\,(d-2)d\Big)
\sum_{\ell,k}A_{\ell,d-2}\,f_{\ell,k}\,Y_{\ell,k}\\
&=\sum_{\ell,k}f_{\ell,k}\,\frac{A_{\ell,d-2}}{C_d }
\Big(\frac{\ell(\ell+d-1)}{d-2} + \frac{d}{4}\Big)Y_{\ell,k}.
\end{split}
\end{equation}
Since this holds for all $f\in L^2(\sph^d)$ we deduce that 
\begin{equation}
A_{\ell,d-2}=C_d 
\Big(\frac{\ell(\ell+d-1)}{d-2} + \frac{d}{4}\Big)^{-1}
\end{equation}
and \eqref{symbol Riesz3} follows in this case.

Assuming now that \eqref{symbol Riesz3} holds in the (super)critical regime $0<d-2\leq s<d$, let us deal with the case $s\in(0, d-2)$. Let $m\in\N$ be such that $s\in[d-2(m+1),d-2m)$, thus indeed $m$ is the unique integer such that 
$(d-s)/2-1\leq m<(d-s)/2$. Then $s+2m\in[d-2,d)$ and, by assumption, 
\begin{equation}\label{symbol Riesz frac to all eq2}
A_{\ell,s+2m}\approx\frac{1}{1+\ell^{d-s-2m}}\qquad\text{for all }\ell\geq0.
\end{equation}
Furthermore, if we set $s_j=s+2j$ for $j=0,1,2,\ldots,m$,
iterating \eqref{lbop through laplace eq1} we deduce that
\begin{equation}\label{lbop through laplace eq1 aux111}
\begin{split}
R_{s_m}(\cdot,x)&=\frac{-\lbop + s_{m-1}(\frac{d-1}{2}-\frac{s_{m-1}}{4})}
{s_{m-1}(d-2-s_{m-1})}\,R_{s_{m-1}}(\cdot,x)\\
&=\frac{-\lbop + s_{m-1}(\frac{d-1}{2}-\frac{s_{m-1}}{4})}
{s_{m-1}(d-2-s_{m-1})}\cdots
\frac{-\lbop + s_{0}(\frac{d-1}{2}-\frac{s_{0}}{4})}
{s_{0}(d-2-s_{0})}\,R_{s_{0}}(\cdot,x)
\end{split}
\end{equation}
Then, similarly to what we did in \eqref{symbol Riesz frac to all eq1}, 
from \eqref{symbol Riesz1}, \eqref{lbop through laplace eq1 aux111}, and \eqref{ev laplace} we have
\begin{equation}
\begin{split}
\sum_{\ell,k}A_{\ell,s+2m}&\,f_{\ell,k}\,Y_{\ell,k}
=R_{s_m}f\\
&=\frac{-\lbop + s_{m-1}(\frac{d-1}{2}-\frac{s_{m-1}}{4})}
{s_{m-1}(d-2-s_{m-1})}\cdots
\frac{-\lbop + s_{0}(\frac{d-1}{2}-\frac{s_{0}}{4})}
{s_{0}(d-2-s_{0})}\,R_{s_{0}}f\\
&=\frac{-\lbop + s_{m-1}(\frac{d-1}{2}-\frac{s_{m-1}}{4})}
{s_{m-1}(d-2-s_{m-1})}\cdots
\frac{-\lbop + s_{0}(\frac{d-1}{2}-\frac{s_{0}}{4})}
{s_{0}(d-2-s_{0})}
\sum_{\ell,k}A_{\ell,s}\,f_{\ell,k}\,Y_{\ell,k}\\
&=\sum_{\ell,k}A_{\ell,s}\frac{\ell(\ell+d-1) + s_{m-1}(\frac{d-1}{2}-\frac{s_{m-1}}{4})}
{s_{m-1}(d-2-s_{m-1})}\\
&\hskip100pt\cdots\frac{\ell(\ell+d-1) + s_{0}(\frac{d-1}{2}-\frac{s_{0}}{4})}
{s_{0}(d-2-s_{0})}\,f_{\ell,k}\,Y_{\ell,k}.
\end{split}
\end{equation}
This combined to \eqref{symbol Riesz frac to all eq2} leads to
\begin{equation}
\begin{split}
A_{\ell,s}&=A_{\ell,s+2m}
\frac{s_{m-1}(d-2-s_{m-1})}{\ell(\ell+d-1) + s_{m-1}(\frac{d-1}{2}-\frac{s_{m-1}}{4})}
\cdots\frac{s_{0}(d-2-s_{0})}{\ell(\ell+d-1) + s_{0}(\frac{d-1}{2}-\frac{s_{0}}{4})}\\
&\approx\frac{1}{(1+\ell^{d-s-2m})(1+\ell^{2m})}\approx\frac{1}{1+\ell^{d-s}}
\end{split}
\end{equation}
for all $\ell\geq0$, and \eqref{symbol Riesz3} follows in the subcritical regime.

\begin{remark}{\em
From the previous computations, if $d>2$ we can get the explicit expressions 
\begin{equation}
A_{\ell,d-2}=\frac{2\pi^{d/2}}{\Gamma\left(d/2\right)}
\cdot\frac{1}{\frac{\ell(\ell+d-1)}{d-2} + \frac{d}{4}}
\end{equation}
and
\begin{equation}
\begin{split}
A_{\ell,d-2k}=A_{\ell,d-2}\,
\frac{2}{\frac{\ell(\ell+d-1)}{d-4} + \frac{d+2}{4}}\cdot
\frac{4}{\frac{\ell(\ell+d-1)}{d-6} + \frac{d+4}{4}}
\cdots\frac{2(k-1)}{\frac{\ell(\ell+d-1)}{d-2k} + \frac{d+2(k-1)}{4}}
\end{split}
\end{equation}
for all $k\geq 2$ integer such that $d-2k>0$. A similar formula can be shown for $A_{\ell,0}$ if $d=4,6,8,\ldots$ by taking into account \eqref{log laplace eq1} to pass from $A_{\ell,0}$ to $A_{\ell,2}$. We omit the details.
}\end{remark}


\subsection{The connection to Sobolev spaces on the sphere}\label{ss spectral.4}

The computations in Section \ref{ss super to sub} served us to see how, for every positive integer $m$, the Sobolev norms $\| \cdot \|_{\mathbb{H}^{m}(\sph^d)}$ defined in terms of the spherical harmonics decomposition correspond to the standard Sobolev norms given by pure derivatives, giving us an intuition for extending Wolff's arguments for the case $(s,d)=(0,2)$ to the whole range $0\leq s<d$. In order to clarify this, let us first make some considerations on the Sobolev spaces $\mathbb{H}^{m}(\sph^d)$.
Of course, $\mathbb{H}^{0}(\sph^d)=L^2(\sph^d)$. Looking at the spherical harmonics, for every given $j\in\N$
an integration by parts and \eqref{ev laplace} show that
\begin{equation}\label{from space to fourier harmonics}
\begin{split}
\int_{\sph^d}  |\Delta^jY_{\ell,k}|^2\,d\sigma
=\int_{\sph^d}  Y_{\ell,k}\,\,\Delta^{2j} Y_{\ell,k}\,d\sigma
=\ell^{2j} (\ell +d-1)^{2j}\approx\ell^{4j}
\end{split}
\end{equation}
and
\begin{equation}\label{from space to fourier harmonics2}
\begin{split}
\int_{\sph^d}  |\nabla \Delta^jY_{\ell,k}|^2\,d\sigma
=-\int_{\sph^d}  Y_{\ell,k}\,\,\Delta^{2j+1} Y_{\ell,k}\,d\sigma
=\ell^{2j+1} (\ell +d-1)^{2j+1}\approx\ell^{4j+2}
\end{split}
\end{equation}
for all $\ell\geq0$, with constants only depending on $d$ and $j$. Moreover, by the orthogonality of the basis $\{ Y_{\ell,k} \}_{k=1, \ell\geq0}^{h_\ell}$, we also get
\begin{equation}\label{from space to fourier harmonics3}
\begin{split}
\int_{\sph^d}  Y_{\ell,k}\,\,\Delta^{j} Y_{\ell',k'}\,d\sigma=0
\end{split}
\end{equation}
for all $j\in\N\cup\{0\}$ whenever $(\ell,k)\neq(\ell',k')$.

Given an odd number $m\in\N$ set $i=(m-1)/2$. Then,
using \eqref{fourier representation f}, \eqref{from space to fourier harmonics}, \eqref{from space to fourier harmonics2}, and \eqref{from space to fourier harmonics3}, for every $f\in L^2(\sph^d)$ we see that
\begin{equation}
\begin{split}
\| f \|_{\mathbb{H}^{m}(\sph^d)}^2
&\approx\sum_{\ell=0}^{+\infty}\sum_{k=1}^{h_\ell} (1+\ell^{2m})|f_{\ell,k}|^2
\approx\sum_{\ell=0}^{+\infty}\sum_{k=1}^{h_\ell}\sum_{j=0}^{i} 
(\ell^{4j}+\ell^{4j+2})|f_{\ell,k}|^2\\
&\approx\sum_{j=0}^{i}\sum_{\ell=0}^{+\infty}\sum_{k=1}^{h_\ell} 
\Big(\int_{\sph^d}  Y_{\ell,k}\,\,\Delta^{2j} Y_{\ell,k}\,d\sigma
-\int_{\sph^d}  Y_{\ell,k}\,\,\Delta^{2j+1} Y_{\ell,k}\,d\sigma\Big)|f_{\ell,k}|^2\\
&=\sum_{j=0}^{i}\Big(\int_{\sph^d}\!\!  f\,\Delta^{2j}\! f\,d\sigma
-\!\int_{\sph^d} \!\! f\,\Delta^{2j+1}\! f\,d\sigma\Big)
=\sum_{j=0}^{i}\int_{\sph^d}  \big(|\Delta^{j} f|^2+|\nabla\Delta^{j} f|^2\big)\,d\sigma,
\end{split}
\end{equation}
where the comparability constants only depend on $d$ and $m$. The same argument applies in case that $m\in\N$ is even. Thus, we get the following well-known result.
\begin{lemma}\label{expressions in pure derivatives}
For $m=0,2,4,6,8,\ldots$ we have
\begin{equation}
\begin{split}
\| f \|_{\mathbb{H}^{m}(\sph^d)}^2
&\approx\|f\|^2_{L^2(\sph^d)}+\|\nabla f\|^2_{L^2(\sph^d)}
+\|\Delta f\|^2_{L^2(\sph^d)}+\|\nabla\Delta f\|^2_{L^2(\sph^d)}\\
&\quad+\ldots+\|\Delta^{\frac{m-2}{2}} f\|^2_{L^2(\sph^d)}+
\|\nabla\Delta^{\frac{m-2}{2}} f\|^2_{L^2(\sph^d)}
+\|\Delta^{\frac{m}{2}} f\|^2_{L^2(\sph^d)},
\end{split}
\end{equation}
and for $m=1,3,5,7,9,\ldots$ we have
\begin{equation}
\begin{split}
\| f \|_{\mathbb{H}^{m}(\sph^d)}^2
&\approx\|f\|^2_{L^2(\sph^d)}+\|\nabla f\|^2_{L^2(\sph^d)}
+\|\Delta f\|^2_{L^2(\sph^d)}+\|\nabla\Delta f\|^2_{L^2(\sph^d)}\\
&\quad+\ldots+\|\nabla\Delta^{\frac{m-3}{2}} f\|^2_{L^2(\sph^d)}+\|\Delta^{\frac{m-1}{2}} f\|^2_{L^2(\sph^d)}+\|\nabla\Delta^{\frac{m-1}{2}} f\|^2_{L^2(\sph^d)}.
\end{split}
\end{equation}
\end{lemma}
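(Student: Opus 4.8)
The plan is to read the comparability straight off the Fourier-side definition of $\|\cdot\|_{\mathbb{H}^m(\sph^d)}$, matching the spectral weight $1+\ell^{2m}$ against a telescoping sum of the eigenvalues of the pure differential operators $\Delta^j$ and $\nabla\Delta^j$. Since $\ell(\ell+d-1)\approx\ell^2$, for a nonnegative integer $i$ one has, with constants depending only on $d$ and $m$,
\[
1+\ell^{2m}\approx\sum_{j=0}^{i}\bigl(\ell^{4j}+\ell^{4j+2}\bigr)
\qquad\text{when }m=2i+1,
\]
and the analogous identity ending in a single top term $\ell^{4i}$ (rather than $\ell^{4i}+\ell^{4i+2}$) when $m=2i$ is even. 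This elementary algebraic step is the only place where the parity of $m$ enters, and it is exactly what forces the displayed expansion in the statement to terminate with $\|\Delta^{m/2}f\|^2_{L^2(\sph^d)}$ in the even case and with $\|\nabla\Delta^{(m-1)/2}f\|^2_{L^2(\sph^d)}$ in the odd case.

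Next I would translate each monomial $\ell^{4j}$ and $\ell^{4j+2}$ into an $L^2$ quantity involving pure derivatives of the spherical harmonics. By the eigenvalue relation \eqref{ev laplace} together with the integrations by parts carried out in \eqref{from space to fourier harmonics} and \eqref{from space to fourier harmonics2}, we have $\int_{\sph^d}|\Delta^jY_{\ell,k}|^2\,d\sigma=\ell^{2j}(\ell+d-1)^{2j}\approx\ell^{4j}$ and $\int_{\sph^d}|\nabla\Delta^jY_{\ell,k}|^2\,d\sigma=\ell^{2j+1}(\ell+d-1)^{2j+1}\approx\ell^{4j+2}$. Plugging these into the expansion above and using the Fourier representation \eqref{fourier representation f} of $f$, the quantity $\sum_{\ell,k}(1+\ell^{2m})|f_{\ell,k}|^2\approx\|f\|^2_{\mathbb{H}^m(\sph^d)}$ becomes, up to constants, a finite sum over $j\in\{0,\dots,i\}$ of $\sum_{\ell,k}|f_{\ell,k}|^2\bigl(\int_{\sph^d}|\Delta^jY_{\ell,k}|^2\,d\sigma+\int_{\sph^d}|\nabla\Delta^jY_{\ell,k}|^2\,d\sigma\bigr)$, with the understanding that in the even case the last gradient contribution is dropped.

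Finally I would collapse these termwise sums back into norms of $f$ itself. The key point is the orthogonality recorded in \eqref{from space to fourier harmonics3}: since $\int_{\sph^d}Y_{\ell,k}\,\Delta^{j}Y_{\ell',k'}\,d\sigma=0$ whenever $(\ell,k)\neq(\ell',k')$, all cross terms vanish and, for each fixed $j$, $\sum_{\ell,k}|f_{\ell,k}|^2\int_{\sph^d}|\Delta^jY_{\ell,k}|^2\,d\sigma=\int_{\sph^d}|\Delta^jf|^2\,d\sigma$, and likewise with $\nabla\Delta^j$ in place of $\Delta^j$. Summing over $j=0,\dots,i$ then reproduces exactly the right-hand sides in the statement, the even/odd dichotomy being inherited from the first step. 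To make the termwise manipulations rigorous one may first take $f$ to be a finite linear combination of spherical harmonics and then pass to the general case by density. The main (indeed essentially the only) point requiring care is the bookkeeping in the first step — checking that the telescoping identity for $1+\ell^{2m}$ is matched to the correct final summand, $\ell^{4i}$ versus $\ell^{4i+2}$ — while everything else is orthogonality and integration by parts.
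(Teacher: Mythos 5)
Your proposal is correct and follows essentially the same route as the paper: compare $1+\ell^{2m}$ to the telescoping sum of even powers $\sum_j(\ell^{4j}+\ell^{4j+2})$ (with the top term adjusted for the parity of $m$), identify each monomial with $\int|\Delta^j Y_{\ell,k}|^2\,d\sigma$ or $\int|\nabla\Delta^j Y_{\ell,k}|^2\,d\sigma$ via \eqref{ev laplace} and integration by parts, and use the orthogonality \eqref{from space to fourier harmonics3} to reassemble these into $\|\Delta^j f\|^2_{L^2}$ and $\|\nabla\Delta^j f\|^2_{L^2}$. The only cosmetic differences are that the paper writes the collapse step as $\int f\,\Delta^{2j}f\,d\sigma - \int f\,\Delta^{2j+1}f\,d\sigma$ before integrating by parts, whereas you collapse termwise directly, and you make explicit the density argument that the paper leaves implicit; neither changes the substance.
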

With the expressions of $\| \cdot\|_{\mathbb{H}^{m}(\sph^d)}$ from 
Lemma \ref{expressions in pure derivatives} at hand, one can now take a new look to \eqref{relation sobolev riesz}.


\section{Asymptotic estimates of Riesz energies} \label{s ass Riesz energies}

This section focuses on asymptotic estimates of Riesz energies on the sphere. The main result, namely Corollary \ref{corollary_bounds_on_means}, 
is an estimate of the continuous Riesz energy of small discs centered at the discrete minimizers in terms of the minimal energy $\mathcal{E}_s(N)$ defined in \eqref{min energy discrete}, plus error terms. This, together with the asymptotic expansion of the minimal energy, will be a key tool in the next section to derive the estimates of the Sobolev discrepancy given in Theorem \ref{teo_sobolev_discrepancy}.

To prove Corollary \ref{corollary_bounds_on_means}, we treat the supercritical and subcritical regimes separately. In the first one, we essentially make use of the separation of the point minimizers, a decomposition of the sphere in dyadic annuli, and Gauss-Green formula \eqref{lbop through laplace eq3}. This is carried out in Lemma \ref{lm 32}, Proposition \ref{prop lbop through laplace main estimate ds1}, and Theorem \ref{teo_estimate_difference} below. Since in the subcritical case the separation property is not known to hold, in Lemma \ref {lemma_mean} below we overcome this difficulty by making use of the fact that $R_{s}(\cdot, x_0)$ is superharmonic near
$x_0$ when $0\le s<d-2$ for $d>2$, as \eqref{lbop through laplace eq1} shows. We mention that the original argument of Wolff for the logarithmic kernel in $\sph^2$ was already based on the use of superharmonicity.


\begin{lemma}				\label{lemma_mean}
For $d>2$ and $0< s<d-2$ there exist $\delta,\,C>0$ depending only on $s$ and $d,$ such that for every $a,b\in \sph^d$,
\begin{equation}
 \dashint_{D_r(a)}\dashint_{D_r(b)} R_s(x,y) \,d\sigma(x)\,d\sigma(y)\le R_s(a,b)+C r^2,
\end{equation}
with, say, $0<r<\frac{\delta}{100}.$ 

For $d\ge 2$ there exist $C>0$ depending only on $d$ such that for every $a,b\in \sph^d$,
\begin{equation}
\dashint_{D_r(a)}\dashint_{D_r(b)}  R_0 (x,y) \,d\sigma(x) \le R_0 (a,b)+C r^2,
\end{equation}
for all $0<r\le 1.$
\end{lemma}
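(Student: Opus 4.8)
The plan is to establish the stronger single-cap estimate $\dashint_{D_r(a)}R_s(x,y)\,d\sigma(x)\le R_s(a,y)+(\text{error})$ and then iterate it in the two variables, organising everything around a coarea decomposition of the cap and the superharmonicity of $R_s(\cdot,y)$. In geodesic polar coordinates centred at $a$ one has, for integrable $f$, $\dashint_{D_r(a)}f\,d\sigma=\int_0^r w(t)\big(\dashint_{\partial D_t(a)}f\,d\sigma'\big)\,dt$, where $\partial D_t(a)=\{x\in\sph^d:|x-a|=t\}$, $\sigma'$ is the $(d-1)$-dimensional Hausdorff measure, and $w\ge0$ with $\int_0^rw=1$ (the Jacobian is harmless because $\big|\nabla|x-a|\big|=\sqrt{1-|x-a|^2/4}$ stays bounded away from $0$ for $r\le1$). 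The geometric point is that $\partial D_t(a)$ is a round $(d-1)$-sphere of $\R^{d+1}$ lying in the hyperplane $H_t=\{z:z\cdot a=1-t^2/2\}$ with centre $c_t:=(1-t^2/2)a\in H_t$, and that the restriction of $R_s(\cdot,y)$ to any affine hyperplane $H\subset\R^{d+1}$ — which in coordinates on $H$ centred at the foot of the perpendicular from $y$ is $w\mapsto(|w|^2+h^2)^{-s/2}$, or $w\mapsto-\tfrac12\log(|w|^2+h^2)$ in the logarithmic case, with $h=\dist(y,H)$ — is superharmonic on $H\cong\R^d$: a one-line Laplacian computation gives $\Delta_{\R^d}(|w|^2+h^2)^{-s/2}\le0$ exactly when $s\le d-2$, and $\Delta_{\R^d}\big(-\tfrac12\log(|w|^2+h^2)\big)\le0$ whenever $d\ge2$. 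Hence the spherical sub-mean value inequality for superharmonic functions yields $\dashint_{\partial D_t(a)}R_s(x,y)\,d\sigma'(x)\le R_s(c_t,y)$.

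The decisive identity is $|c_t-y|^2=|a-y|^2(1-\tfrac{t^2}2)+\tfrac{t^4}4\ge|a-y|^2(1-\tfrac{t^2}2)$: the curvature correction $t^4/4$ has the \emph{favourable} sign and may simply be dropped. In the logarithmic case this gives $R_0(c_t,y)\le R_0(a,y)-\tfrac12\log(1-\tfrac{t^2}2)\le R_0(a,y)+\tfrac{t^2}2$ for $t\le1$, so integrating in $t$ yields $\dashint_{D_r(a)}R_0(x,y)\,d\sigma(x)\le R_0(a,y)+\tfrac{r^2}2$ for \emph{every} $y\in\sph^d$ (in particular for $y$ inside the cap — this is exactly what the favourable sign buys us), and applying this bound twice, once in each variable, produces $\dashint_{D_r(a)}\dashint_{D_r(b)}R_0(x,y)\,d\sigma(x)d\sigma(y)\le R_0(a,b)+r^2$ for all $a,b$ and all $r\le1$, which is the logarithmic statement (with $C=1$). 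For $0<s<d-2$ the same computation, using $|c_t-y|^{-s}\le|a-y|^{-s}(1-\tfrac{t^2}2)^{-s/2}\le|a-y|^{-s}(1-\tfrac{r^2}2)^{-s/2}$, only gives the uniform but \emph{multiplicative} bound $\dashint_{D_r(a)}\dashint_{D_r(b)}R_s(x,y)\,d\sigma(x)d\sigma(y)\le(1-\tfrac{r^2}2)^{-s}R_s(a,b)\le(1+C_sr^2)R_s(a,b)$ for all $a,b$. This already yields $\le R_s(a,b)+Cr^2$ once $R_s(a,b)$ is bounded, that is, whenever $|a-b|\ge\rho_0/2$, where I set $\rho_0:=2\sqrt{(d-2-s)/(2d-2-s)}>0$.

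For $|a-b|<\rho_0/2$ I would instead exploit the Laplace–Beltrami superharmonicity: by \eqref{lbop through laplace eq1}, $-\lbop_xR_s(x,x_0)=s(d-2-s)R_{s+2}(x,x_0)-\tfrac14s(2d-2-s)R_s(x,x_0)$, which is $\ge0$ precisely for $|x-x_0|\le\rho_0$, so $R_s(\cdot,x_0)$ is superharmonic on the cap $D_{\rho_0}(x_0)$. A superharmonic function on a geodesic ball of $\sph^d$ has average over each concentric cap at most its value at the centre: symmetrise over the subgroup of rotations fixing the centre to reduce to a radial profile $\phi(t)$, $t=\dist(\cdot,x_0)$, and the equation $-\lbop\ge0$ then forces $t\mapsto(\sin t)^{d-1}\phi'(t)$ to be non-increasing and to vanish at $t=0$, hence $\phi$ is non-increasing. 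Applying this first in $y$ over $D_r(b)$ and then in $x$ over $D_r(a)$ is legitimate when $r<\rho_0/100$ and $|a-b|<\rho_0/2$ — then $D_r(b)\subset D_{\rho_0}(x)$ for every $x\in D_r(a)$, and $D_r(a)\subset D_{\rho_0}(b)$, while the interior singularity of $R_s(\cdot,y)$ when the caps overlap is harmless since $s<d$ — and gives $\dashint_{D_r(a)}\dashint_{D_r(b)}R_s(x,y)\,d\sigma(x)d\sigma(y)\le R_s(a,b)$. Taking $\delta=\rho_0$, the ranges $|a-b|<\rho_0/2$ and $|a-b|\ge\rho_0/2$ cover everything and the first statement follows.

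The main obstacle is precisely obtaining an \emph{additive} error $O(r^2)$ rather than a multiplicative one $O(r^2R_s(a,b))$: the hyperplane–superharmonicity argument only produces the multiplicative error, which blows up as $a\to b$, and this is what forces the patching of the small-$|a-b|$ regime by the error-free Laplace–Beltrami estimate, available only inside $D_{\rho_0}$. It is here that the logarithm saves the day in the $s=0$ case, converting the multiplicative factor $(1-\tfrac{t^2}2)^{-s/2}$ into the additive $-\tfrac12\log(1-\tfrac{t^2}2)\le\tfrac{t^2}2$, so that no patching is needed and the estimate is uniform in $a,b$ and valid for all $r\le1$. The remaining points — the sub-mean value property on spherical caps, the interior-singularity issue, and the elementary computations of $|c_t-y|^2$ and of the hyperplane Laplacians — are routine.
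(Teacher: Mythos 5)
Your proof is correct, and part of it takes a genuinely different route from the paper's. The paper's proof for $0<s<d-2$ splits according to whether $D_r(b)\subset D_{2\delta}(a)$ or $D_r(b)\subset\sph^d\setminus D_\delta(a)$. In the first case the \emph{spherical} superharmonicity of $R_s(\cdot,y)$ from Lemma~\ref{laplacian_of_riesz} (valid precisely for $|x-y|\le\rho_0:=2\sqrt{(d-2-s)/(2d-2-s)}$, your threshold) gives the sub-mean value with no error; in the second case $\lbop_x R_s(x,y)$ is merely bounded above, so the paper subtracts the quadratic $\frac{C_{s,d}}{d}|x-a|^2$ to restore spherical superharmonicity, paying the $Cr^2$ error. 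The paper handles $s=0$ for all $a,b$ at once by the same quadratic subtraction, using $\lbop_x R_0(\cdot,y)\le\frac{d-1}{2}$ everywhere.

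Your alternative replaces the quadratic subtraction by a sharper geometric observation: $\partial D_t(a)$ is a round $(d-1)$-sphere of \emph{Euclidean} center $c_t=(1-\tfrac{t^2}{2})a$ in the affine hyperplane $H_t=\{z:z\cdot a=1-\tfrac{t^2}{2}\}$, the restriction $R_s(\cdot,y)|_{H_t}$ is Euclidean-superharmonic on $H_t\cong\R^d$ whenever $s\le d-2$ (a direct Laplacian computation), and the identity $|c_t-y|^2=|a-y|^2(1-\tfrac{t^2}{2})+\tfrac{t^4}{4}$ carries its curvature correction with the favourable sign. In the logarithmic case this turns the would-be multiplicative factor into the additive $-\tfrac12\log(1-\tfrac{t^2}{2})\le\tfrac{t^2}{2}$ and yields the uniform bound for all $a,b$ and $r\le1$ in one stroke, which is cleaner and more elementary than the paper's subtraction trick. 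For $0<s<d-2$ the hyperplane step only gives a multiplicative error $(1+C_sr^2)R_s(a,b)$, so you must patch exactly as you did: fall back on spherical superharmonicity when $|a-b|<\rho_0/2$ (the same ingredient as the paper's first case), and observe that the multiplicative error becomes additive when $|a-b|\ge\rho_0/2$. Net effect: in the subcritical regime the two-regime structure is the same as the paper's with a different far-field mechanism, while in the logarithmic case your treatment is genuinely simpler and avoids any case split.
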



\begin{proof}
Let $a,\,b\in \sph^d$ and $\delta>0$ small enough to be chosen later on. We take $r>0$ such that, say, $0<r<\frac{\delta}{100}$ and we split the argument into two cases,
$$\;\; D_r (b)\subset D_{2\delta}(a)\;\; \mbox{or}\;\; \;\; D_r (b)\subset \sph^d\setminus D_{\delta}(a).$$ 
Observe that from Lemma \ref{laplacian_of_riesz} lemma we get 
$$\Delta_x R_s(x,y)= s\left[ \frac{(2d-s-2)}{4}|x-y|^2-(d-s-2)  \right]R_{s+2}(x,y).$$
Thus, there exist $\delta>0$ small enough depending on $s$ and $d$ such that for $|x-y|<10\delta$ 
$$\Delta_x R_s(x,y)\le 0.$$

In the first case $D_r (b)\subset D_{2\delta}(a),$
we get $D_r(a),\,D_r(b)\subset D_{2\delta}(a)$ and, therefore,
$$\Delta_x R_s(x,y)\le 0,\qquad \Delta_y R_s(x,y)\le 0,$$
for all $x\in D_r(a)$ and $y\in D_r(b).$
Then we get
$$\dashint_{D_r(b)} R_s(a,y) d\sigma(y)\le R_s(a,b)\quad\mbox{and} \quad\dashint_{D_r(a)} R_s(x,y) d\sigma(x)\le R_s(a,y),$$
for all $y\in D_r(b)$ and, therefore,
\begin{equation}    \label{case1}
\dashint_{D_r(a)}\dashint_{D_r(b)} R_s(x,y) \,d\sigma(x)\,d\sigma(y)\le R_s(a,b).
\end{equation}

In the second case $D_r (b)\subset \sph^d\setminus D_{\delta}(a),$ we take the $\delta>0$ as in the first case. Observe that for 
$x\in D_r(a)$ and $y\in D_r(b)$ we have $|x-y|>\frac{\delta}{2}$ and therefore, from the explicit expressions above,
$\Delta_x R_s(x,y)$ is bounded above by a constant $C_{s,d}>0$ depending only on $s$ and $d.$ By a computation similar to one in Lemma \ref{laplacian_of_riesz} we have
$$\Delta_x |x-x_0|^2=d(2-|x-x_0|^2)$$
for all $x_0\in \sph^d$,
and therefore if $|x-x_0|\le 1$ we have $\Delta_x |x-x_0|^2\ge d.$ Then, taking $C=C_{s,d}/d$ and $x_0=a$, we get
$$\Delta_x (R_s(x,y)-C |x-a|^2)\le 0,\quad x\in D_r(a).$$
From this superharmonicity and the corresponding mean value inequality we obtain
$$\dashint_{D_r(a)} R_s(x,y)d\sigma(x)- C \dashint_{D_r(a)} |x-a|^2 d\sigma(x)\le R_s(a,y)$$
and, thus,
\begin{equation}    \label{uno}
\dashint_{D_r(a)} R_s(x,y)d\sigma(x)\le R_s(a,y)+ C r^2 .
\end{equation}
Similarly, we get
$$\dashint_{D_r(b)} R_s(a,y)d\sigma(y)- C \dashint_{D_r(b)} |y-b|^2 d\sigma(y)\le R_s(a,b),$$
and
\begin{equation}    \label{dos}
\dashint_{D_r(b)} R_s(a,y)d\sigma(y)\le R_s(a,b)+ C r^2 .
\end{equation}
Combining (\ref{uno}) and (\ref{dos}) we finally obtain 
$$\dashint_{D_r(a)}\dashint_{D_r(b)} R_s(x,y) \,d\sigma(x)\,d\sigma(y)\le R_s(a,b)+C r^2,$$
and the result follows together with (\ref{case1}).

In the logarithmic case we argue as above and take $a,\,b\in \sph^d$ and $0<r\le 1.$ Then, in the distributional sense, for every $y \in D_r(b)$,
$$\Delta_x \Big(R_0(\cdot ,y )-\frac{d-1}{2d} |\cdot -a|^2\Big)\le 0,\quad\mbox{in}\;\;D_r(a),$$
and
$$\Delta_y \Big(R_0(a,\cdot )-\frac{d-1}{2d} |\cdot -b|^2\Big)\le 0,\quad\mbox{in}\;\;D_r(b).$$
It follows that
$$\dashint_{D_r(a)}\dashint_{D_r(b)} R_0 (x,y) \,d\sigma(x)\,d\sigma(y)\le R_0 (a,b)+\frac{d-1}{d} r^2.$$
\end{proof}



\begin{lemma}\label{lm 32}
Let $d>2$, $0< s<d$, $r_0>0$, and $x_0,\,x_1\in\sph^{d}$ be such that $|x_0-x_1|>r_0$.
Then,
\begin{equation}\label{lbop through laplace form1}
\begin{split}
R_s(x_0,x_1)
&=\frac{ds(d-2-s)}{(d-2)C_d r_0^{d}}
\int_0^{r_0}\!\int_{D_r(x_0)}\!\!r^{d-1}
\big(R_{d-2}(x,x_0)-r^{2-d}\big)R_{s+2}(x,x_1)\,d\sigma(x) \,dr\\
&\quad+\frac{d(d-s)(d-2-s)}{4(d-2)C_d r_0^{d}}
\int_0^{r_0}\!\int_{D_r(x_0)}r^{d-1}R_{d-2}(x,x_0)R_s(x,x_1)\,d\sigma(x)\,dr\\
&\quad+\frac{ds(2d-2-s)}{4(d-2)C_d r_0^{d}}
\int_0^{r_0}\!\int_{D_r(x_0)}rR_s(x,x_1)\,d\sigma(x)\,dr\\
&\quad+\frac{d}{C_d r_0^{d}}\int_{D_{r_0}(x_0)}\!
\Big(1-\frac{1}{4}|x-x_0|^2\Big)R_s(x,x_1)\,d\sigma(x),
\end{split}
\end{equation}
where $C_d=2{\pi^{d/2}}/{\Gamma\left(d/2\right)}$. 
\end{lemma}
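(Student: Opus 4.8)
The plan is to prove \eqref{lbop through laplace form1} by starting from the Gauss--Green identity \eqref{lbop through laplace eq3} in Lemma~\ref{laplacian_of_riesz} applied on the disc $\Omega=D_r(x_0)$, with the test function $f(x)=R_s(x,x_1)$. This is legitimate for each $r<r_0$ since $|x_0-x_1|>r_0$ forces $x_1\notin\overline{D_r(x_0)}$, so $R_s(\cdot,x_1)\in\mathcal{C}^2(\overline{D_r(x_0)})$ and $x_0\in D_r(x_0)$; the Dirac mass in \eqref{lbop through laplace eq2} sits precisely at $x_0$. Thus
\begin{equation*}
C_d(d-2)R_s(x_0,x_1)=\int_{D_r(x_0)}R_{d-2}(x,x_0)\Big(\!\!-\lbop+\tfrac14(d-2)d\Big)R_s(x,x_1)\,d\sigma(x)+B_r,
\end{equation*}
where $B_r$ is the boundary integral over $\partial D_r(x_0)$.

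Next I would rewrite the volume integrand using \eqref{lbop through laplace eq1}: since $-\lbop_x R_s(x,x_1)=-\tfrac14 s(2d-2-s)R_s(x,x_1)+s(d-2-s)R_{s+2}(x,x_1)$, the operator $\big(\!-\lbop+\tfrac14(d-2)d\big)$ acting on $R_s(\cdot,x_1)$ produces $s(d-2-s)R_{s+2}(x,x_1)+\tfrac14\big((d-2)d-s(2d-2-s)\big)R_s(x,x_1)$, and one checks $(d-2)d-s(2d-2-s)=(d-s)(d-2-s)+\text{(a multiple of }s\text{)}$ — more precisely I would just expand and organize the coefficient so that it splits into the $\tfrac14(d-s)(d-2-s)$ term multiplying $R_s$ plus the $\tfrac14 s(2d-2-s)$ term, matching the structure of the claimed right-hand side. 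The boundary term $B_r$ I would handle with the normal-derivative computation already done in the proof of Lemma~\ref{laplacian_of_riesz}: on $\partial D_r(x_0)$ one has $R_{d-2}(x,x_0)=r^{2-d}$ (constant), so the $R_{d-2}\nabla f\cdot\nu$ part contributes $r^{2-d}\int_{\partial D_r(x_0)}\nabla_x R_s(x,x_1)\cdot\nu\,d\sigma'$, which by the divergence theorem equals $r^{2-d}\int_{D_r(x_0)}\lbop_x R_s(x,x_1)\,d\sigma(x)$, and this I again rewrite via \eqref{lbop through laplace eq1}; the $f\nabla R_{d-2}\cdot\nu$ part I evaluate using $\nabla R_{d-2}(x,x_0)\cdot\nu(x)=(d-2)R_d(x,x_0)|x_0-(x_0\cdot x)x|$ on $\partial D_r(x_0)$, simplified by $|x_0-(x_0\cdot x)x|^2=\tfrac12(1+(x_0\cdot x))|x-x_0|^2$ exactly as in \eqref{lbop through laplace eq2 interm 6bis}.

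The point of the two extra radial integrations is then an averaging trick: the identity above holds for every $r\in(0,r_0)$, so I multiply by $r^{d-1}$ and integrate $\int_0^{r_0}\!\cdots\,dr$; the factor $r^{d-1}$ converts the surface integrals over $\partial D_r(x_0)$ into solid integrals over $D_{r_0}(x_0)$ via the coarea formula $\int_0^{r_0}\!\int_{\partial D_r(x_0)}g\,d\sigma'\,dr=\int_{D_{r_0}(x_0)}g\,d\sigma$ (up to the Jacobian relating $\sigma'$ on geodesic spheres to $r^{d-1}$, which is why the claimed formula carries both a $\int_0^{r_0}\!\int_{D_r(x_0)}r^{d-1}(\cdots)$ term and a plain $\int_{D_{r_0}(x_0)}(\cdots)$ term), and the left-hand side picks up the normalizing constant $\int_0^{r_0}r^{d-1}\,dr=r_0^{d}/d$, accounting for the prefactor $d/(C_d r_0^{d})$ throughout. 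After collecting: the volume term splits into the first two lines of \eqref{lbop through laplace form1} (the $R_{s+2}$ line and the $R_{d-2}R_s$ line) together with contributions folded into the last two lines; the boundary term furnishes the $r^{2-d}$ subtraction inside the first line, the $\int_0^{r_0}\!\int r R_s\,d\sigma\,dr$ line (from the Laplacian-of-$R_s$ rewrite of the normal-derivative-of-$f$ piece), and the final $\int_{D_{r_0}(x_0)}(1-\tfrac14|x-x_0|^2)R_s(x,x_1)\,d\sigma$ line (from the normal-derivative-of-$R_{d-2}$ piece, using the $\tfrac12(1+(x_0\cdot x))=1-\tfrac14|x-x_0|^2$ identity). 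The main obstacle I anticipate is purely bookkeeping: tracking the several coefficient combinations (the $(d-s)(d-2-s)$, $s(2d-2-s)$, $s(d-2-s)$ constants) through the algebraic rearrangement of $\big(\!-\lbop+\tfrac14(d-2)d\big)R_s$ and through the boundary-term simplification so that every constant lands exactly as written, together with correctly normalizing the geodesic-sphere surface measure against $r^{d-1}$ and $C_d$; there is no conceptual difficulty beyond Lemmas~\ref{laplacian_of_riesz} and the coarea formula, but the computation must be done carefully.
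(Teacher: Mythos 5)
Your plan is correct and is essentially the paper's proof: apply \eqref{lbop through laplace eq3} on $\Omega=D_r(x_0)$ with $f=R_s(\cdot,x_1)$, rewrite the Helmholtz volume term and the $R_{d-2}\,\nabla f\cdot\nu$ boundary piece via \eqref{lbop through laplace eq1} and the divergence theorem, evaluate the $f\,\nabla R_{d-2}\cdot\nu$ piece exactly as in \eqref{lbop through laplace eq2 interm 6bis}, then multiply by $r^{d-1}$, integrate over $r\in(0,r_0)$, and use the coarea formula. One small bookkeeping fix: the coefficient identity is exact, namely $(d-2)d-s(2d-2-s)=(d-s)(d-2-s)$, so there is no leftover ``multiple of $s$'' to absorb.
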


\begin{proof}
Let $0<r<r_0$. Applying \eqref{lbop through laplace eq3} with $\Omega=D_r(x_0)$ and 
$f(x)=R_s(x,x_1)$ we get
\begin{equation}\label{lbop through laplace eq2 interm 7}
\begin{split}
C_d (d-2)R_s(x_0,x_1)&=
\int_{D_r(x_0)}R_{d-2}(x,x_0)
\,\Big(\!\!-\lbop + \frac{1}{4}\,(d-2)d\Big)R_s(x,x_1)\,d\sigma(x)\\
&\quad+\int_{\partial D_r(x_0)}\!\! 
R_{d-2}(x,x_0)\,\nabla R_s(x,x_1)\!\cdot\! \nu(x)\,d\sigma'(x)\\
&\quad-\int_{\partial D_r(x_0)}\!\! R_s(x,x_1)\,\nabla R_{d-2}(x,x_0)
\!\cdot\! \nu(x)\,d\sigma'(x)\\
&=:I_1(r)+I_2(r)+I_3(r),
\end{split}
\end{equation}
where, as before, $\sigma'$ stands for the $(d-1)$-dimensional Hausdorff measure.
The proof of \eqref{lbop through laplace form1} is based on multiplying \eqref{lbop through laplace eq2 interm 7} by $r^{d-1}$ and integrating over all $r\in(0,r_0)$. We deal with the three terms on the right hand side of \eqref{lbop through laplace eq2 interm 7} separately. On one hand, using \eqref{lbop through laplace eq1} we get
\begin{equation}\label{lbop through laplace eq2 interm 8}
\begin{split}
I_1(r)=(d-2-s)\int_{D_r(x_0)}R_{d-2}(x,x_0)
\,\Big(\frac{d-s}{4}\,R_s(x,x_1)+sR_{s+2}(x,x_1)\Big)\,d\sigma(x).
\end{split}
\end{equation}
Regarding $I_2(r)$, since $R_{d-2}(x,x_0)=r^{2-d}$ for all $x\in\partial D_r(x_0)$, the divergence theorem and \eqref{lbop through laplace eq1}
yield
\begin{equation}\label{lbop through laplace eq2 interm 9bis}
\begin{split}
I_2(r)&=r^{2-d}\int_{D_r(x_0)}\lbop R_s(x,x_1)\,d\sigma(x)\\
&=r^{2-d}s\int_{D_r(x_0)}
\Big(\frac{1}{4}\,(2d-2-s)R_s(x,x_1)-(d-2-s)R_{s+2}(x,x_1)\Big)\,d\sigma(x).
\end{split}
\end{equation}
Finally, arguing as in \eqref{lbop through laplace eq2 interm 6bis} we deduce that
\begin{equation}\label{lbop through laplace eq2 interm 6}
\begin{split}
I_3(r)&=\frac{d-2}{r^{d-1}}\int_{\partial D_r(x_0)}\!\!
\Big(\frac{1+(x_0\cdot x)}{2}\Big)^{1/2}\,R_s(x,x_1)\,d\sigma'(x).
\end{split}
\end{equation}
A combination of \eqref{lbop through laplace eq2 interm 6} and the smooth coarea formula, see \cite[page 160]{Chavel}, leads to
\begin{equation}\label{lbop through laplace eq2 interm 10}
\begin{split}
\int_0^{r_0} r^{d-1}I_3(r)\,dr
&=(d-2)\int_{D_{r_0}(x_0)}\frac{1+(x_0\cdot x)}{2}\,R_s(x,x_1)\,d\sigma(x)\\
&=(d-2)\int_{D_{r_0}(x_0)}\Big(1-\frac{|x-x_0|^2}{4}\Big)R_s(x,x_1)\,d\sigma(x).
\end{split}
\end{equation}
Therefore, if we multiply \eqref{lbop through laplace eq2 interm 7} by $r^{d-1}$ and we integrate over all $r\in(0,r_0)$, using \eqref{lbop through laplace eq2 interm 8},
\eqref{lbop through laplace eq2 interm 9bis} and 
\eqref{lbop through laplace eq2 interm 10} we finally get 
\eqref{lbop through laplace form1}.
\end{proof}


\begin{proposition}\label{prop lbop through laplace main estimate ds1}
Let $d>2$ and $0< s<d$. There exists $C>0$ only depending on $d$ and $s$ such that 
\begin{equation}\label{lbop through laplace main estimate ds1}
\bigg|R_s(x_0,x_1)-\dashint_{D_{r_0}(x_0)}\!R_s(x,x_1)\,d\sigma(x) \bigg|
\leq C\big(\psi_{d-2}(s)2^{2k}+1\big)2^{ks}r_0^{2}
\end{equation}
for all $k\geq0$, all $x_0,\,x_1\in\sph^{d}$ with $|x_0-x_1|\geq2^{-k}$ and all $0<r_0\leq2^{-k-2}$, where we have set $\psi_{d-2}(s)=0$ if $s=d-2$ and 
$\psi_{d-2}(s)=1$ otherwise.
\end{proposition}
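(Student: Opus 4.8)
The plan is to insert the representation formula \eqref{lbop through laplace form1} of Lemma \ref{lm 32} into the left-hand side of \eqref{lbop through laplace main estimate ds1}. The formula applies with the $r_0$ of the statement, since the hypotheses force $|x_0-x_1|\ge 2^{-k}>2^{-k-2}\ge r_0$. Denote the four terms on the right of \eqref{lbop through laplace form1} by $T_1,T_2,T_3,T_4$, in order of appearance, and write $M:=\dashint_{D_{r_0}(x_0)}R_s(x,x_1)\,d\sigma(x)$ for the target average. The idea is that $T_4$ equals $M$ up to an error $O(2^{ks}r_0^2)$, that $T_3$ and $T_2$ are also $O(2^{ks}r_0^2)$, and that $T_1$ — the only term carrying $R_{s+2}(\cdot,x_1)$ — is $O(2^{k(s+2)}r_0^2)=O(2^{2k}\cdot 2^{ks}r_0^2)$ but vanishes identically when $s=d-2$ because of its prefactor $s(d-2-s)$; this last fact is exactly what $\psi_{d-2}(s)$ encodes. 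Summing the four estimates then gives $|R_s(x_0,x_1)-M|\lesssim\psi_{d-2}(s)2^{2k}\cdot 2^{ks}r_0^2+2^{ks}r_0^2$, which is \eqref{lbop through laplace main estimate ds1}.

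First I would record the pointwise bounds used throughout. Since $r_0\le 2^{-k-2}=\tfrac14\,2^{-k}\le\tfrac14|x_0-x_1|$, every $x\in D_{r_0}(x_0)$ satisfies $|x-x_1|\ge|x_0-x_1|-|x-x_0|>\tfrac34|x_0-x_1|\ge\tfrac34\,2^{-k}$, so that
\[
R_s(x,x_1)\le\bigl(\tfrac43\bigr)^{\!s}2^{ks},\qquad R_{s+2}(x,x_1)\le\bigl(\tfrac43\bigr)^{\!s+2}2^{k(s+2)}
\]
uniformly on $D_{r_0}(x_0)$, hence on every $D_r(x_0)$ with $0<r\le r_0$. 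I would also use three elementary estimates on $\sph^d$ with constants depending only on $d$: $\sigma(D_r(x_0))=\tfrac{C_d}{d}r^d(1+O(r^2))$, $\int_{D_r(x_0)}|x-x_0|^{2-d}\,d\sigma(x)\lesssim r^2$, and $\int_{D_r(x_0)}|x-x_0|^{2}\,d\sigma(x)\lesssim r^{d+2}$, all of which follow by passing to geodesic polar coordinates about $x_0$.

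Next I would treat $T_4=\frac{d}{C_dr_0^d}\int_{D_{r_0}(x_0)}(1-\tfrac14|x-x_0|^2)R_s(x,x_1)\,d\sigma(x)$. Writing $M=\sigma(D_{r_0}(x_0))^{-1}\int_{D_{r_0}(x_0)}R_s(x,x_1)\,d\sigma(x)$, the difference $T_4-M$ splits as
\[
\bigl(\tfrac{d}{C_dr_0^d}-\sigma(D_{r_0}(x_0))^{-1}\bigr)\!\int_{D_{r_0}(x_0)}\!\!R_s(x,x_1)\,d\sigma(x)\;-\;\tfrac{d}{4C_dr_0^d}\!\int_{D_{r_0}(x_0)}\!\!|x-x_0|^2R_s(x,x_1)\,d\sigma(x).
\]
For the second piece, $|R_s(\cdot,x_1)|\lesssim 2^{ks}$ on $D_{r_0}(x_0)$ and $\int_{D_{r_0}(x_0)}|x-x_0|^2\,d\sigma\lesssim r_0^{d+2}$ give a bound $\lesssim 2^{ks}r_0^2$; for the first, the volume asymptotics give $\sigma(D_{r_0}(x_0))^{-1}=\tfrac{d}{C_dr_0^d}(1+O(r_0^2))$, hence $|\tfrac{d}{C_dr_0^d}-\sigma(D_{r_0}(x_0))^{-1}|\lesssim r_0^{2-d}$, and multiplying by $|\int_{D_{r_0}(x_0)}R_s(\cdot,x_1)\,d\sigma|\lesssim 2^{ks}\sigma(D_{r_0}(x_0))\lesssim 2^{ks}r_0^d$ again yields $\lesssim 2^{ks}r_0^2$. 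Thus $|T_4-M|\lesssim 2^{ks}r_0^2$.

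Finally I would bound $T_1,T_2,T_3$. For $T_3$ (prefactor $s(2d-2-s)$): $|T_3|\lesssim 2^{ks}r_0^{-d}\int_0^{r_0}r\,\sigma(D_r(x_0))\,dr\lesssim 2^{ks}r_0^{-d}\int_0^{r_0}r^{d+1}\,dr\lesssim 2^{ks}r_0^2$. For $T_2$ (prefactor $(d-s)(d-2-s)$, so $T_2\equiv0$ when $s=d-2$): bounding $R_{d-2}(x,x_0)=|x-x_0|^{2-d}$, $|T_2|\lesssim 2^{ks}r_0^{-d}\int_0^{r_0}r^{d-1}\!\int_{D_r(x_0)}|x-x_0|^{2-d}\,d\sigma\,dr\lesssim 2^{ks}r_0^{-d}\int_0^{r_0}r^{d+1}\,dr\lesssim 2^{ks}r_0^2$. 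For $T_1$ (prefactor $s(d-2-s)$, so $T_1\equiv0$ when $s=d-2$): since $t\mapsto t^{2-d}$ is decreasing, $0\le R_{d-2}(x,x_0)-r^{2-d}=|x-x_0|^{2-d}-r^{2-d}$ on $D_r(x_0)$, and $\int_{D_r(x_0)}(|x-x_0|^{2-d}-r^{2-d})\,d\sigma\le\int_{D_r(x_0)}|x-x_0|^{2-d}\,d\sigma+r^{2-d}\sigma(D_r(x_0))\lesssim r^2$; combined with $R_{s+2}(\cdot,x_1)\lesssim 2^{k(s+2)}$ this gives $|T_1|\lesssim 2^{k(s+2)}r_0^{-d}\int_0^{r_0}r^{d-1}\cdot r^2\,dr\lesssim 2^{2k}\cdot 2^{ks}r_0^2$, and $T_1=0$ when $s=d-2$. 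Adding the four estimates yields \eqref{lbop through laplace main estimate ds1}. I do not expect a genuine obstacle here: the substance is entirely in the representation formula \eqref{lbop through laplace form1}. The one point demanding care is the bookkeeping that confirms the $2^{2k}$-growth is confined to $T_1$ (and $T_2$) and is switched off precisely at the critical exponent by the factor $(d-2-s)$, which is what legitimizes the indicator $\psi_{d-2}(s)$; the remaining work is standard size estimation of integrals of $|x-x_0|^{2-d}$, $|x-x_0|^2$ and $1$ over small geodesic discs.
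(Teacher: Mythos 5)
Your proof is correct and follows essentially the same route as the paper: both start from the representation formula of Lemma \ref{lm 32}, decompose the difference $R_s(x_0,x_1)-\dashint_{D_{r_0}(x_0)}R_s(\cdot,x_1)\,d\sigma$ into the contributions of the four terms (your $T_4-M$ being the paper's $S_4+S_5$), bound each term using $|x-x_1|\gtrsim 2^{-k}$ on $D_{r_0}(x_0)$ together with the elementary integral estimates over small spherical caps, and observe that the two terms carrying the factor $(d-2-s)$ — and only those — account for the $\psi_{d-2}(s)2^{2k}$ growth. The bookkeeping and the final bounds $|T_1|\lesssim\psi_{d-2}(s)2^{k(s+2)}r_0^2$, $|T_2|,|T_3|,|T_4-M|\lesssim 2^{ks}r_0^2$ match the paper's $S_1,\dots,S_5$ exactly.
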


\begin{proof}
Thanks to \eqref{lbop through laplace form1}, we can write
\begin{equation}\label{lbop through laplace main estimate ds1 aux}
\begin{split}
R_s(x_0,x_1)&-\frac{1}{|D_{r_0}(x_0)|}\int_{D_{r_0}(x_0)}\!R_s(x,x_1)\,d\sigma(x)\\
&=\frac{ds(d-2-s)}{(d-2)C_d r_0^{d}}
\int_0^{r_0}\!\int_{D_r(x_0)}\!\!r^{d-1}
\big(R_{d-2}(x,x_0)-r^{2-d}\big)R_{s+2}(x,x_1)\,d\sigma(x)\,dr\\
&\quad+\frac{d(d-s)(d-2-s)}{4(d-2)C_d r_0^{d}}
\int_0^{r_0}\!\int_{D_r(x_0)}r^{d-1}R_{d-2}(x,x_0)R_s(x,x_1)\,d\sigma(x)\,dr\\
&\quad+\frac{ds(2d-2-s)}{4(d-2)C_d r_0^{d}}
\int_0^{r_0}\!\int_{D_r(x_0)}rR_s(x,x_1)\,d\sigma(x)\,dr\\
&\quad-\frac{d}{4C_d r_0^{d}}\int_{D_{r_0}(x_0)}\!
|x-x_0|^2R_s(x,x_1)\,d\sigma(x)\\
&\quad+\Big(\frac{d}{C_d r_0^{d}}-\frac{1}{|D_{r_0}(x_0)|}\Big)\int_{D_{r_0}(x_0)}\!R_s(x,x_1)\,d\sigma(x)\\
&=:S_1+S_2+S_3+S_4+S_5.
\end{split}
\end{equation}
We are going to estimate the terms $S_1,\ldots,S_5$ separately. However, all the estimates rely basically on the assumptions $|x_0-x_1|\geq2^{-k}$ and $r_0\leq2^{-k-2}$. On one hand, we easily see that
\begin{equation}\label{lbop through laplace main estimate ds1 s1}
\begin{split}
|S_1|&\leq C\psi_{d-2}(s)2^{k(s+2)}r_0^{-d}
\int_0^{r_0}\!\int_{D_r(x_0)}\!\!r^{d-1}(|x-x_0|^{2-d}-r^{2-d})\,d\sigma(x)\,dr\\
&\leq C\psi_{d-2}(s)2^{k(s+2)}r_0^{2}.
\end{split}
\end{equation}
Similarly,
\begin{equation}\label{lbop through laplace main estimate ds1 s2s3s4}
|S_2|\leq C\psi_{d-2}(s)2^{ks}r_0^{2},
\qquad|S_3|\leq C2^{ks}r_0^{2}
\qquad\text{and}\qquad|S_4|\leq C2^{ks}r_0^{2}.
\end{equation}
Finally, by taking local chards in $\sph^d$, one can show that
$\big||D_{r}(x)|-C_d r^{d}/d\big|\leq C r^{d+2}$ for all $0\leq r\leq2$ and all $x\in\sph^d$. Hence,
\begin{equation}\label{lbop through laplace main estimate ds1 s5}
|S_5|\leq Cr_0^{-2d}\big||D_{r}(x)|-C_d r^{d}/d\big|
\int_{D_{r_0}(x_0)}\!R_s(x,x_1)\,d\sigma(x)
\leq C2^{ks}r_0^2.
\end{equation}
Plugging \eqref{lbop through laplace main estimate ds1 s1}, 
\eqref{lbop through laplace main estimate ds1 s2s3s4} and 
\eqref{lbop through laplace main estimate ds1 s5} in 
\eqref{lbop through laplace main estimate ds1 aux}, we obtain
\eqref{lbop through laplace main estimate ds1}, as desired.
\end{proof}


\begin{theorem}								\label{teo_estimate_difference}
Let $d>2$, $0< s<d$, and $\rho>0$. There exists $C>0$ only depending on $d$, $s$ and $\rho$ such that 
\begin{equation}\label{estimate double sum}
\begin{split}
\frac{1}{N^2}\sum_{i\neq j}\bigg|R_s(x_i,x_j)
-\dashint_{D_j}\dashint_{D_i}\!R_s(x,y)\,d\sigma(x)\,d\sigma(y)\bigg|
\leq C\epsilon^2\big(N^{-\frac{2}{d}}+N^{-1+\frac{s}{d}}\big),
\end{split}
\end{equation}
for all $0<\epsilon\leq\rho/8$, all $N\in\N$ and every sequence of points 
$\{x_j\}_{j=1,\ldots,N}\subset\sph^d$ such that
$|x_i-x_j|\geq \rho N^{-1/d}$ for all $i\neq j$, where we have set $D_j=D_{\epsilon N^{-1/d}}(x_j)$ for $j=1,\ldots,N$.
\end{theorem}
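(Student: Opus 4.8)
The plan is to localize Proposition~\ref{prop lbop through laplace main estimate ds1} to each pair of points at a dyadic scale adapted to their distance, and then to sum the resulting bounds using the separation hypothesis. Set $\delta_{ij}:=|x_i-x_j|\ge\rho N^{-1/d}$ and $r_0:=\epsilon N^{-1/d}$, so that $r_0\le\delta_{ij}/8$. First I would reduce the double-disc average to single-disc ones: by the triangle inequality through $\dashint_{D_i}R_s(x,x_j)\,d\sigma(x)$, together with the symmetry $R_s(x,y)=R_s(y,x)$ and the disjointness of $D_i$ and $D_j$ (which validates Fubini), the summand in \eqref{estimate double sum} is bounded, for each $i\ne j$, by the sum of
\[
\Big|R_s(x_i,x_j)-\dashint_{D_i}R_s(x,x_j)\,d\sigma(x)\Big|
\quad\text{and}\quad
\sup_{x\in D_i}\Big|R_s(x_j,x)-\dashint_{D_j}R_s(y,x)\,d\sigma(y)\Big|.
\]
In both expressions the distance between the disc centre and the remaining point lies in $[\tfrac{7}{8}\delta_{ij},\tfrac{9}{8}\delta_{ij}]$, since every point in play is within $r_0$ of $x_i$ or of $x_j$. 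Using $\epsilon\le\rho/8$ (hence $\delta_{ij}\ge 8r_0$) one checks that for each pair there is an integer $k=k(i,j)\ge0$ with $4r_0\le 2^{-k}\approx\delta_{ij}$ not exceeding that distance, so that Proposition~\ref{prop lbop through laplace main estimate ds1} applies with this $k$ and this $r_0$.

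Feeding $2^{-k}\approx\delta_{ij}$ and $r_0=\epsilon N^{-1/d}$ into Proposition~\ref{prop lbop through laplace main estimate ds1}, each of the two quantities above is
\[
\lesssim\big(\psi_{d-2}(s)\,2^{2k}+1\big)2^{ks}r_0^2
\approx\epsilon^2 N^{-2/d}\big(\psi_{d-2}(s)\,\delta_{ij}^{-s-2}+\delta_{ij}^{-s}\big),
\]
and therefore
\[
\Big|R_s(x_i,x_j)-\dashint_{D_j}\dashint_{D_i}R_s(x,y)\,d\sigma(x)\,d\sigma(y)\Big|
\lesssim\epsilon^2 N^{-2/d}\big(\psi_{d-2}(s)\,\delta_{ij}^{-s-2}+\delta_{ij}^{-s}\big)
\]
for every $i\ne j$, with constants depending only on $d$, $s$, $\rho$.

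It then remains to divide by $N^2$ and sum over pairs. Here I would invoke the classical bound for Riesz energies of separated configurations: grouping, for each fixed $i$, the remaining points into dyadic annuli around $x_i$ and bounding the number in the $m$-th annulus by $\lesssim 2^{-md}N$ via the disjointness of the caps of radius $\sim\rho N^{-1/d}$ centred at the points, one obtains
\[
\sum_{i\ne j}\delta_{ij}^{-t}\lesssim
\begin{cases}
N^2,&0<t<d,\\
N^2\log N,&t=d,\\
N^{1+t/d},&t>d,
\end{cases}
\]
with constants depending on $t$, $d$, $\rho$. We apply this with $t=s$ (always admissible, giving $\lesssim N^2$) and, in the term carrying $\psi_{d-2}(s)$, with $t=s+2$, distinguishing: $s<d-2$ gives $s+2<d$ and a sum $\lesssim N^2$; $s=d-2$ makes $\psi_{d-2}(s)=0$, so the term drops; $s>d-2$ gives $s+2>d$ and a sum $\lesssim N^{1+(s+2)/d}$. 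Dividing by $N^2$, the $\delta_{ij}^{-s}$ part contributes $\epsilon^2 N^{-2/d}$, while the $\delta_{ij}^{-s-2}$ part contributes $\epsilon^2 N^{-2/d}$ when $s\le d-2$ and $\epsilon^2 N^{-2/d-1+(s+2)/d}=\epsilon^2 N^{-1+s/d}$ when $s>d-2$; adding up yields the claimed $C\epsilon^2(N^{-2/d}+N^{-1+s/d})$.

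The main points needing care are the admissibility of the dyadic scale $k$ in the first step — this is exactly where the smallness $\epsilon\le\rho/8$ enters, the mild complication being that one of the two discs is displaced by $r_0$ relative to the other — and the observation that it is the supercritical energy sum $\sum_{i\ne j}\delta_{ij}^{-(s+2)}$ with exponent $s+2>d$ that produces the second term $N^{-1+s/d}$; the rest is routine triangle-inequality bookkeeping and the standard separated-set counting argument.
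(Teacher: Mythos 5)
Your proof is correct and follows essentially the same route as the paper: decompose the double average by the triangle inequality into two single-disc averages, apply Proposition~\ref{prop lbop through laplace main estimate ds1} pointwise, and sum via the dyadic annulus counting $\#\{j:|x_i-x_j|\sim 2^{-k}\}\lesssim 2^{-kd}N$. Your repackaging of the final summation as the classical separated-set Riesz energy bound $\sum_{i\ne j}\delta_{ij}^{-t}\lesssim N^{1+t/d}$ for $t>d$ is cosmetic; the paper performs the same dyadic sum explicitly in \eqref{estimate double sum almost there}--\eqref{estimate double sum almost there 2}.
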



\begin{proof}
First of all, note that $D_i\cap D_j=\emptyset$ for all $i\neq j$ since 
$0<\epsilon\leq\rho/8$ and $|x_i-x_j|\geq \rho N^{-1/d}$. Therefore, the left hand side of \eqref{estimate double sum} is well defined and finite for all $s\geq0$. 

Given $i\neq j$, since $\rho N^{-1/d}\leq|x_i-x_j|\leq2$, there exists some integer 
$k\geq0$ such that 
\begin{equation}\label{estimate double sum pointwise number 2}
\rho N^{-1/d}/2\leq 2^{-k}\leq|x_i-x_j|\leq2^{-k+1}.
\end{equation} 
Then, using the triangle inequality and \eqref{lbop through laplace main estimate ds1}
we can estimate
\begin{equation}\label{estimate double sum pointwise ij}
\begin{split}
\bigg|R_s(x_i,x_j)-\dashint_{D_j}\dashint_{D_i}\!R_s&(x,y)\,d\sigma(x)\,d\sigma(y)\bigg|\\
&\leq\bigg|R_s(x_i,x_j)-\dashint_{D_j}R_s(x_i,y)\,d\sigma(y)\bigg|\\
&\quad+\dashint_{D_j}\bigg|R_s(x_i,y)-\dashint_{D_i}\!R_s(x,y)
\,d\sigma(x)\bigg|\,d\sigma(y)\\
&\leq C\big(\psi_{d-2}(s)2^{2k}+1\big)2^{ks}\epsilon^2 N^{-2/d}.
\end{split}
\end{equation}
In addition, due to the constraint $|x_i-x_j|\geq \rho N^{-1/d}$ for all $i\neq j$, it is not hard to show that there exists $C>0$ only depending on $d$ such that, for every $i$ and $k$,
\begin{equation}\label{estimate double sum pointwise number}
\#\{j:\,2^{-k}\leq|x_i-x_j|\leq2^{-k+1}\}\leq C2^{-kd}\rho^{-d}N.
\end{equation} 
Therefore, a combination of \eqref{estimate double sum pointwise number 2}, 
\eqref{estimate double sum pointwise ij} and 
\eqref{estimate double sum pointwise number} leads to
\begin{equation}\label{estimate double sum almost there}
\begin{split}
\sum_{i\neq j}\bigg|&R_s(x_i,x_j)
-\dashint_{D_j}\dashint_{D_i}\!R_s(x,y)\,d\sigma(x)\,d\sigma(y)\bigg|\\
&\leq C\sum_{1\leq i\leq N}\,\sum_{0\leq k\leq \log_2(\frac{2N^{1/d}}{\rho})}
\,\sum_{j:\,2^{-k}\leq|x_i-x_j|\leq2^{-k+1}}
\big(\psi_{d-2}(s)2^{2k}+1\big)2^{ks}\epsilon^2 N^{-2/d}\\
&\leq C\rho^{-d}\epsilon^2 N^{2-2/d}\sum_{0\leq k\leq \log_2(\frac{2N^{1/d}}{\rho})}
2^{-k(d-s)}\big(\psi_{d-2}(s)2^{2k}+1\big).
\end{split}
\end{equation}
Recall that if $s=d-2$ then $\psi_{d-2}(s)=0$, thus from 
\eqref{estimate double sum almost there} we obtain in this case
\begin{equation}\label{estimate double sum almost there 1}
\begin{split}
\sum_{i\neq j}\bigg|&R_{d-2}(x_i,x_j)
-\dashint_{D_j}\dashint_{D_i}\!R_{d-2}(x,y)\,d\sigma(x)\,d\sigma(y)\bigg|
\leq C\epsilon^2 N^{2-2/d}
\end{split}
\end{equation}
for some $C>0$ only depending on $d$, $s$ and $\rho$. On the other hand, if 
$s\neq d-2$ then $\psi_{d-2}(s)=1$, and from 
\eqref{estimate double sum almost there} we get
\begin{equation}\label{estimate double sum almost there 2}
\begin{split}
\sum_{i\neq j}\bigg|R_s(x_i,x_j)&
-\dashint_{D_j}\dashint_{D_i}\!R_s(x,y)\,d\sigma(x)\,d\sigma(y)\bigg|\\
&\leq C\rho^{-d}\epsilon^2 N^{2-2/d}\sum_{0\leq k\leq \log_2(\frac{2N^{1/d}}{\rho})}
2^{-k(d-2-s)}\\
&\leq C\epsilon^2 N^{2-2/d}(1+N^{(s-(d-2))/d})
= C\epsilon^2(N^{2-2/d}+N^{1+s/d})
\end{split}
\end{equation}
for some $C>0$ only depending on $d$, $s$ and $\rho$, as before. In any case, \eqref{estimate double sum} follows directly from 
\eqref{estimate double sum almost there 1} and 
\eqref{estimate double sum almost there 2}.
\end{proof}

\begin{remark}{\em
The estimate \eqref{estimate double sum} may not seem sharp but, as far as one bases it on a pointwise estimate of the factor inside the sum independently of $i$, in the spirit of \eqref{estimate double sum almost there}, one cannot expect anything better than \eqref{estimate double sum}. This is essentially because the estimate in \eqref{lbop through laplace main estimate ds1} is sharp for points in the sphere satisfying $2^{-k}\leq|x_i-x_j|\leq2^{-k+1}$. That is to say, there exists $C>0$ such that 
\begin{equation}\label{prop lbop through laplace main estimate ds1 remark}
\begin{split}
C^{-1}2^{k(s+2)}r_0^{2}
\leq\bigg|R_s(x_0,x_1)-\dashint_{D_{r_0}(x_0)}\!R_s(x,x_1)\,d\sigma(x)\bigg|
\leq C2^{k(s+2)}r_0^{2}
\end{split}
\end{equation}
whenever $s\neq d-2$ and $2^{-k}\leq|x_0-x_1|\leq2^{-k+1}$, where $x_0$, $x_1$ and $r_0$ are as in 
Proposition \ref{prop lbop through laplace main estimate ds1}. To see this simply note that $|S_1|$ in \eqref{lbop through laplace main estimate ds1 aux}, which is comparable to $2^{k(s+2)}r_0^{2}$ and has a positive integrand, is the dominant term in the decomposition given in \eqref{lbop through laplace main estimate ds1 aux} as $k\to+\infty$. Thus, all the other terms $S_j$ can be absorbed by $S_1$ for $k$ big enough, and everything is comparable for $k$ small. This reasoning gives the lower bound in \eqref{prop lbop through laplace main estimate ds1 remark}.
}\end{remark}

\begin{remark}\label{rmk separation thm asimpt}{\em
It is not hard to extend Theorem \ref{teo_estimate_difference} to the more general case $d\geq2$ and $0\leq s<d$ by a suitable modification of Lemma \ref{lm 32} and Proposition \ref{prop lbop through laplace main estimate ds1} using the corresponding identities from Lemma \ref{laplacian_of_riesz}, for example \eqref{log laplace eq3} instead of \eqref{lbop through laplace eq3} when $d=2$, or \eqref{log laplace eq1} instead of \eqref{lbop through laplace eq1} when $s=0$. We omit the details for the sake of shortness.
}\end{remark}


\begin{corollary}				\label{corollary_bounds_on_means}
Given $0\le s<d$, let $\{ x_i \}_{i=1}^N$ be an $N$ point set of minimizers of the Riesz $s$-energy. Then, there exist 
$\epsilon_0=\epsilon_0(s,d)>0$ such that, if $0<\epsilon<\epsilon_0$
and $D_j=D_{\epsilon N^{-1/d}}(x_j)$ for $j=1,\ldots,N$, 
\begin{equation}					\label{estimate-minimal-energy}
\begin{split}
 \frac{1}{N^2}\sum_{i\neq j} \dashint_{D_j}\dashint_{D_i}\!R_s(x,y)\,
d\sigma(x)\,d\sigma(y)
\leq  \frac{\mathcal{E}_s(N)}{N^2}+ C \epsilon^2 \big(N^{-\frac{2}{d}}+N^{-1+\frac{s}{d}}\big),
\end{split}
\end{equation}
for some constant $C>0$ depending only on $d$ and $s$.
\end{corollary}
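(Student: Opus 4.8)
The plan is to reduce the statement to a purely local estimate and then split into two regimes according to whether the minimizers are known to be separated. Since the $x_i$ are minimizers, \eqref{s_energy}, \eqref{log_energy} and \eqref{min energy discrete} give $\sum_{i\neq j}R_s(x_i,x_j)=E_s(X_N)=\mathcal{E}_s(N)$, so \eqref{estimate-minimal-energy} follows once we bound the averaged sum $\frac{1}{N^2}\sum_{i\neq j}\big(\dashint_{D_j}\dashint_{D_i}R_s(x,y)\,d\sigma(x)\,d\sigma(y)-R_s(x_i,x_j)\big)$ from above by $C\epsilon^2(N^{-2/d}+N^{-1+s/d})$ for $\epsilon$ small and all $N$, with $C$ depending only on $d$ and $s$.

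In the \emph{separated regime} — namely $d=2$ with $0\le s<2$, and $d\ge3$ with $d-2\le s<d$ — I would use that, as recalled in the introduction, the minimizers satisfy $|x_i-x_j|\ge\rho N^{-1/d}$ for some $\rho=\rho(s,d)>0$. Taking $\epsilon_0=\rho/8$, Theorem \ref{teo_estimate_difference} (supplemented by Remark \ref{rmk separation thm asimpt} to cover the case $d=2$ and the logarithmic endpoint $s=0$) applied with this $\rho$ bounds $\frac{1}{N^2}\sum_{i\neq j}\big|\dashint_{D_j}\dashint_{D_i}R_s(x,y)\,d\sigma(x)\,d\sigma(y)-R_s(x_i,x_j)\big|$ by $C\epsilon^2(N^{-2/d}+N^{-1+s/d})$, the constant depending only on $d$ and $s$ (absorbing the dependence on $\rho$); dropping the absolute value in each summand finishes this case.

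In the \emph{non-separated regime} $d\ge3$, $0\le s<d-2$, where separation of the minimizers is not available and Theorem \ref{teo_estimate_difference} cannot be used, I would instead invoke Lemma \ref{lemma_mean}: superharmonicity of $R_s(\cdot,x_0)$ near $x_0$ yields $\delta=\delta(s,d)>0$ such that $\dashint_{D_r(a)}\dashint_{D_r(b)}R_s(x,y)\,d\sigma(x)\,d\sigma(y)\le R_s(a,b)+Cr^2$ for \emph{all} $a,b\in\sph^d$ whenever $0<r<\delta/100$ (and for all $0<r\le1$ when $s=0$). Setting $\epsilon_0=\delta/100$ and noting that $r=\epsilon N^{-1/d}\le\epsilon<\delta/100$, applying this with $a=x_i$, $b=x_j$, summing over $i\neq j$, and dividing by $N^2$ gives $\frac{1}{N^2}\sum_{i\neq j}\dashint_{D_j}\dashint_{D_i}R_s(x,y)\,d\sigma(x)\,d\sigma(y)\le\mathcal{E}_s(N)/N^2+C\epsilon^2N^{-2/d}$, which is in fact stronger than required since $N^{-2/d}$ dominates $N^{-1+s/d}$ in this range. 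As the two regimes exhaust all $(s,d)$ with $d\ge2$ and $0\le s<d$, this completes the argument.

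The main obstacle is exactly the passage to the subcritical range $0\le s<d-2$: there only the weaker separation of order $N^{-1/(s+2)}$ is known, which is of smaller order than $N^{-1/d}$ and hence too crude to feed into Theorem \ref{teo_estimate_difference}; the key realization is that the mean-value bound of Lemma \ref{lemma_mean} needs no separation at all, resting on superharmonicity rather than on a pointwise oscillation estimate. Everything else is routine bookkeeping of the error terms and of the dependence of the constants on $s$ and $d$.
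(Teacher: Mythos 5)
Your proposal is correct and matches the paper's own argument: the paper likewise splits into the separated regime ($d-2\le s<d$, all $d\ge2$), where it invokes Theorem \ref{teo_estimate_difference} (plus Remark \ref{rmk separation thm asimpt} for $d=2$ and $s=0$), and the subcritical regime ($d\ge 3$, $0\le s<d-2$), where it applies the superharmonicity-based mean-value inequality of Lemma \ref{lemma_mean}. Your identification of the reason for the dichotomy — that only the weaker $N^{-1/(s+2)}$ separation is known below $s=d-2$, and that Lemma \ref{lemma_mean} sidesteps the issue entirely — is exactly the point the paper itself makes.
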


\proof
If $d\geq2$ and $d-2\leq s<d$ the minimizers are separated $|x_i-x_j|\ge c N^{-1/d}$, 
and by taking $\epsilon_0>0$ small enough according to the separation the result follows from Theorem \ref{teo_estimate_difference} and Remark \ref{rmk separation thm asimpt}. 
For $d>2$ and $0\leq s<d-2$ we apply Lemma \ref{lemma_mean} to spherical caps of radius $\epsilon N^{-1/d}$
for $\epsilon>0$ small enough.
\qed


\section{Estimates of the Sobolev discrepancy} \label{s4}

In this section we derive the estimates of the Sobolev discrepancy stated in Theorem \ref{teo_sobolev_discrepancy}, which are sharp for the range $d-2\leq s<d$. 
The first result, that can be 
seen as a sort of Stolarky's invariance principle, generalizes Wolff's result on the Sobolev discrepancy for $\sph^2$ and $s=0$.

\begin{lemma}						\label{lemma:decomposition}
Let $D_1,\dots, D_N$ be spherical caps in $\sph^d$ of the same radius $r>0.$ Consider the measures
$$\mu_i=\frac{\chi_{D_i}}{\sigma(D_i)} \,\sigma,\qquad\mu=\frac{1}{N}\sum_{i=1}^N \mu_i -\frac{\sigma}{\omega_{d}}.$$ 
In particular, $\mu(\sph^d)=0.$ Let $K(x,y)=K(|x-y|)$ be a rotation invariant integrable kernel. Then,
\begin{equation}
\begin{split}
\frac{1}{N^2}  \sum_{i\neq j} \int_{\sph^d}\!\int_{\sph^d} K(x,y)\,d\mu_i(x)\,d\mu_j(y)
&=\frac{1}{\omega_d^2}\int_{\sph^d}\!\int_{\sph^d} K(x,y)\,d\sigma(x)\,d\sigma(y)
\\
& \quad+
\int_{\sph^d}\!\int_{\sph^d} K(x,y)\,d\mu(x)\,d\mu(y)\\
& \quad-\frac{1}{N}\dashint_{D}\dashint_{D} K(x,y)\,d\sigma(x)\,d\sigma(y),
\end{split}
\end{equation}
where $D$ is a spherical cap of radius $r$ centered at the north pole. 
\end{lemma}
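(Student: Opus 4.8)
The plan is to expand everything in terms of the probability measure $\nu:=\frac1N\sum_{i=1}^N\mu_i$, so that $\mu=\nu-\sigma/\omega_d$, and then to isolate the diagonal ($i=j$) contribution. First I would write
\[
\int_{\sph^d}\!\!\int_{\sph^d} K\,d\nu\,d\nu=\frac{1}{N^2}\sum_{i,j}\int_{\sph^d}\!\!\int_{\sph^d} K\,d\mu_i\,d\mu_j=\frac{1}{N^2}\sum_{i\neq j}\int_{\sph^d}\!\!\int_{\sph^d} K\,d\mu_i\,d\mu_j+\frac{1}{N^2}\sum_{i=1}^N\int_{\sph^d}\!\!\int_{\sph^d} K\,d\mu_i\,d\mu_i,
\]
and observe that, since $\mu_i=\chi_{D_i}\sigma/\sigma(D_i)$, the $i$-th diagonal term equals $\dashint_{D_i}\dashint_{D_i} K(x,y)\,d\sigma(x)\,d\sigma(y)$; because $K$ and $\sigma$ are rotation invariant and every $D_i$ is a spherical cap of the same radius $r$, this in turn equals $\dashint_D\dashint_D K(x,y)\,d\sigma(x)\,d\sigma(y)$, with $D$ a cap of radius $r$ centered at the north pole. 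Hence the diagonal sum contributes exactly $\frac1N\dashint_D\dashint_D K\,d\sigma\,d\sigma$, and the off-diagonal sum appearing in the statement equals $\int\!\int K\,d\nu\,d\nu-\frac1N\dashint_D\dashint_D K\,d\sigma\,d\sigma$.

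It then remains to relate $\int\!\int K\,d\nu\,d\nu$ to $\int\!\int K\,d\mu\,d\mu$. Expanding the bilinear form in $\mu=\nu-\sigma/\omega_d$ and using the symmetry $K(x,y)=K(y,x)$ gives
\[
\int_{\sph^d}\!\!\int_{\sph^d} K\,d\mu\,d\mu=\int_{\sph^d}\!\!\int_{\sph^d} K\,d\nu\,d\nu-\frac{2}{\omega_d}\int_{\sph^d}\!\!\int_{\sph^d} K\,d\nu\,d\sigma+\frac{1}{\omega_d^2}\int_{\sph^d}\!\!\int_{\sph^d} K\,d\sigma\,d\sigma.
\]
Here I would use that $x\mapsto\int_{\sph^d}K(x,y)\,d\sigma(y)$ is constant on $\sph^d$ by rotation invariance; calling this constant $c_K$ and using that $\nu$ is a probability measure and $\sigma(\sph^d)=\omega_d$, one gets $\int\!\int K\,d\nu\,d\sigma=c_K=\frac1{\omega_d}\int\!\int K\,d\sigma\,d\sigma$. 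Substituting this into the displayed identity collapses the cross term and yields $\int\!\int K\,d\nu\,d\nu=\int\!\int K\,d\mu\,d\mu+\frac1{\omega_d^2}\int\!\int K\,d\sigma\,d\sigma$. Combining this with the first paragraph gives precisely the claimed formula.

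There is no genuine obstacle here: the lemma is a bookkeeping identity that separates the ``continuous'' self-energy $\omega_d^{-2}\int\!\int K\,d\sigma\,d\sigma$, the discrepancy term $\int\!\int K\,d\mu\,d\mu$, and the diagonal correction $-\frac1N\dashint_D\dashint_D K\,d\sigma\,d\sigma$. The only point requiring a little care is integrability: since $K$ is merely assumed integrable, one should note that all the double integrals above are absolutely convergent — for $\int\!\int K\,d\mu_i\,d\mu_j$ because $\mu_i,\mu_j$ are bounded multiples of $\sigma$ and $\int\!\int|K|\,d\sigma\,d\sigma<\infty$ by rotation invariance and Fubini — so that the rearrangements of finite sums and integrals are justified. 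In the intended application $K=R_s$ with $0\le s<d$ this is automatic.
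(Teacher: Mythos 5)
Your proof is correct and follows essentially the same route as the paper: both are the same bilinear expansion, with the diagonal terms recognized as all equal by rotation invariance of $K$ and of the caps' radii, and the cross term collapsed by noting that $\int_{\sph^d}K(x,y)\,d\sigma(y)$ is constant in $x$. The paper applies that constancy via $\int\!\int K\,d\mu\,d\sigma=0$ (since $\mu(\sph^d)=0$), whereas you apply it via $\int\!\int K\,d\nu\,d\sigma=\omega_d^{-1}\int\!\int K\,d\sigma\,d\sigma$; this is a cosmetic difference in bookkeeping, not a different idea.
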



\proof
Writing the measure $\mu$ in terms of its summands we have
\begin{equation}
\begin{split}
 \int_{\sph^d}\!\int_{\sph^d}   K(x,y)\,d\mu (x)\,d\mu (y)
 &=\frac{1}{N^2} \sum_{i,j} \int_{\sph^d}\!\int_{\sph^d} K(x,y)\,d\mu_i(x)\,d\mu_j(y) 
\\
&\quad
-\frac{2}{N{\omega_d}} \sum_{i} \int_{\sph^d}\!\int_{\sph^d} K(x,y)\,d\mu_i(x)\, d\sigma(y)\\ 
&\quad+
\frac{1}{\omega_d^2}\int_{\sph^d}\!\int_{\sph^d} K(x,y)\,d\sigma(x)\,d\sigma(y).
\end{split}
\end{equation}
Now, observing that
$$2\Big(\frac{1}{N}\sum_i d\mu_i(x)\Big)\frac{d\sigma(y)}{\omega_d}-\frac{d\sigma(x)\,d\sigma(y)}{\omega_d^2}=
\frac{d\sigma(x)\,d\sigma(y)}{\omega_d^2}+
2\,\frac{d\mu(x)\,d\sigma(y)}{\omega_d},$$
we obtain
\begin{equation}
\begin{split}
 \int_{\sph^d}\!\int_{\sph^d}  K(x,y)\,d\mu (x)\,d\mu (y)
 &=\frac{1}{N^2} \sum_{i\neq j} \int_{\sph^d}\!\int_{\sph^d} K(x,y)\,d\mu_i(x)\,d\mu_j(y) \\
&\quad+\frac{1}{N^2} \sum_{i} \int_{\sph^d}\!\int_{\sph^d} K(x,y)\,d\mu_i(x)\,d\mu_i(y)\\ 
&\quad -\frac{1}{\omega_d^2}\int_{\sph^d}\!\int_{\sph^d} K(x,y)\,d\sigma(x)\,d\sigma(y)\\
&\quad-\frac{2}{\omega_d}\int_{\sph^d}\!\int_{\sph^d} K(x,y)\,d\mu(x)\,d\sigma(y).
\end{split}
\end{equation}
The last integral vanishes because $\mu(\sph^d)=0$ and $K(x,y)$ is rotation invariant. Moreover, by rotation invariance, the integrals
$$\int_{\sph^d}\!\int_{\sph^d} K(x,y)\,d\mu_i(x)\,d\mu_i(y)$$
are all equal and independent of the center of the spherical cap. 
\qed


\begin{proposition}				\label{prop_decomposition}
Let $D_1,\dots, D_N$ be spherical caps in $\sph^d$ of the same radius $\epsilon N^{-1/d}.$ Consider the measures
$$\mu_i=\frac{\chi_{D_i}}{\sigma(D_i)} \,\sigma,\qquad\mu=\frac{1}{N}\sum_{i=1}^N \mu_i -\frac{\sigma}{\omega_{d}}.$$ 
Then, 
\begin{equation}
 E_s(\widetilde{\sigma})+E_s(\mu)-\frac{1}{N^2} 
\sum_{i\neq j}  \int_{\sph^d}\!\int_{\sph^d} R_s(x,y) \,d\mu_i(x)\,d\mu_j(y)\approx \epsilon^{-s} N^{-1+\frac{s}{d}},
\end{equation}
for $0<s<d,$ where $\widetilde{\sigma}=\sigma/\omega_d$ is the normalized surface measure in $\sph^d$. If $s=0$, then
\begin{equation}
\begin{split}
 \frac{1}{N^2} \sum_{i\neq j}  \int_{\sph^d}\!\int_{\sph^d} R_0(x,y) & \,d\mu_i(x)\,d\mu_j(y)
=E_0(\widetilde{\sigma})+E_0(\mu)-\frac{1}{d}\frac{\log N}{N}+O(N^{-1}).
\end{split}
\end{equation}
\end{proposition}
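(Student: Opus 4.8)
The plan is to deduce the claimed formulas directly from Lemma~\ref{lemma:decomposition} and then reduce everything to a single scalar estimate. I would apply Lemma~\ref{lemma:decomposition} with the rotation invariant, integrable kernel $K(x,y)=R_s(x,y)$ (integrability holds since $0\le s<d$). Since $\widetilde{\sigma}=\sigma/\omega_d$, the first term on the right hand side of that lemma is exactly $E_s(\widetilde{\sigma})$, and the second is $E_s(\mu)$ by the definition \eqref{riesz energy definition measure}. Rearranging, this gives
\begin{equation*}
E_s(\widetilde{\sigma})+E_s(\mu)-\frac{1}{N^2}\sum_{i\neq j}\int_{\sph^d}\!\int_{\sph^d}R_s(x,y)\,d\mu_i(x)\,d\mu_j(y)=\frac{1}{N}\,I_s(r),\qquad I_s(r):=\dashint_D\dashint_D R_s(x,y)\,d\sigma(x)\,d\sigma(y),
\end{equation*}
where $D$ is a spherical cap of radius $r:=\epsilon N^{-1/d}$ (its centre is immaterial by rotation invariance). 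Hence it suffices to show that $I_s(r)\approx r^{-s}$ for $0<s<d$ and $I_0(r)=-\log r+O(1)$ as $r\to0$, and then to substitute $r=\epsilon N^{-1/d}$.

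For the cap estimate I would use that, for $r$ small, $D$ is a mild perturbation of a flat $d$-ball of radius comparable to $r$: fixing a parametrization of $D$ (orthogonal projection onto the tangent hyperplane at the centre, say), one gets $\sigma(D)=c_dr^d(1+O(r^2))$ and, for $x,y\in D$, the chordal distance $|x-y|$ is comparable to the Euclidean distance of the parameters with multiplicative error $1+O(r^2)$; equivalently, $\sigma(\{y\in D:\,|x-y|<\rho\})\lesssim\rho^d$ uniformly for $x\in D$ and $0<\rho\le2r$. For $0<s<d$ the diameter bound $|x-y|\le2r$ gives at once $I_s(r)\ge(2r)^{-s}$, while the distribution estimate just mentioned together with the convergence of $\int_{\{|u|\le1\}}|u|^{-s}\,du$ (which is where $s<d$ enters) gives, via the layer-cake formula, $I_s(r)\le C_{d,s}\,r^{-s}$; hence $I_s(r)\approx r^{-s}=\epsilon^{-s}N^{s/d}$, and dividing by $N$ yields the claimed $\epsilon^{-s}N^{-1+s/d}$.

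For $s=0$ I would write $R_0(x,y)=-\log r-\log(|x-y|/r)$: the first summand contributes exactly $-\log r$ to $I_0(r)$, and since $|x-y|/r\le2$ on $D\times D$ and $-\log$ is locally integrable, the averaged second summand is $O(1)$ (with a constant depending on $d$ and $\epsilon$). Thus $I_0(r)=-\log r+O(1)=\tfrac1d\log N-\log\epsilon+O(1)$, so $\tfrac1N I_0(r)=\tfrac1d\tfrac{\log N}{N}+O(N^{-1})$, and inserting this into the identity above gives the stated expansion for $s=0$.

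The only genuinely technical point is the passage from the curved cap to the flat model in the estimates for $I_s(r)$; since $r\to0$ the curvature corrections are of strictly lower order than the main terms, so this step is routine bookkeeping rather than a real obstacle, and everything else is just algebra with Lemma~\ref{lemma:decomposition}.
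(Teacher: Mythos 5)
Your proposal is correct and follows the same structural path as the paper: apply Lemma~\ref{lemma:decomposition} with $K=R_s$, identify the three terms, and reduce everything to the one self-interaction average $\dashint_D\dashint_D R_s\,d\sigma\,d\sigma$ over a single cap of radius $r=\epsilon N^{-1/d}$. The only difference is technical and in the last step: the paper parametrizes the cap by normal (geodesic) coordinates, shows the cap integral is comparable to its flat-ball analogue, and then uses the exact scaling $\dashint_{B(0,r)}\dashint_{B(0,r)}R_s = r^{-s}\dashint_{B(0,1)}\dashint_{B(0,1)}R_s$ (and its logarithmic variant), whereas you obtain the same two-sided bound intrinsically on the sphere via the diameter bound for the lower estimate and a layer-cake argument using $\sigma(\{y\in D:|x-y|<\rho\})\lesssim\rho^d$ for the upper estimate, with the analogous splitting $R_0=-\log r-\log(|x-y|/r)$ for $s=0$. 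Both routes are elementary and equally rigorous; yours avoids explicit chart computations, while the paper's gives a cleaner exact constant for the flat model (not needed for the stated $\approx$ and $O(\cdot)$ conclusions). Either way, the claim follows.
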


\proof
For $0<s<d$ we take $K=R_s$ in Lemma \ref{lemma:decomposition} to deduce that
$$ \frac{1}{N^2} \sum_{i\neq j} \! \int_{\sph^d}\int_{\sph^d} R_s(x,y) \,d\mu_i(x)\,d\mu_j(y)
=  {E_s(\widetilde\sigma)}+E_s(\mu)-\frac{1}{N}\dashint_{D}\dashint_{D} R_s(x,y)\,d\sigma(x)\,d\sigma(y),
$$
where $D$ denotes a spherical cap of radius $\epsilon N^{-1/d}$ centered at the north pole $n=(0,\dots, 0,1).$
To estimate this last integral 
we use normal coordinates around
the north pole, say,
$$
\Phi(x)=
\begin{cases}
(0,\dots , 0,1), &\text{if } x=0,\\
\big(\frac{x}{|x|}\sin |x|,\cos |x|\big), &\text{if } x\ne 0,
\end{cases}
$$
for $x\in \R^d$ with $|x|\le \pi.$
For every $r>0$, we have
\begin{equation}
\begin{split}
&\int_{D(n,r) }\!\int_{D(n,r) }R_s (x,y)\,d\sigma(x)\,d\sigma(y)\\
&\quad=\int_{B\big(0,\,\arccos(1-\frac{r^2}{2})\big) } \!\int_{B\big(0,\,\arccos(1-\frac{r^2}{2})\big) } \!\!R_s (\Phi(x),\Phi(y))\Big(\frac{\sin |x|}{|x|}\Big)^{d-1} \Big(\frac{\sin |y|}{|y|}\Big)^{d-1} dx\, dy,
\end{split}
\end{equation}
where $B(a,r)$ is the ball in $\R^d$ of center $a\in \R^d$ and radius $r>0.$
As there exist constants $C,\,c>0$ such that $c|x-y|\le |\Phi(x)-\Phi(y)|\le C|x-y|$ 
and $1/2\le \sin t/t\le  1$ for all $|t|\le \pi/2$, we deduce that 
$$\int_{D(n,r) }\!\int_{D(n,r) }R_s (x,y)\,d\sigma(x)\,d\sigma(y)\approx 
\int_{B\big(0,\,\arccos(1-\frac{r^2}{2})\big) }\! \int_{B\big(0,\,\arccos(1-\frac{r^2}{2})\big) } R_s (x,y)\,  dx\, dy.$$
Finally, it is easy to check that, for $0<s<d$,
$$\dashint_{B(0,r) }\dashint_{B(0,r) }R_s (x-y)\, dx\, dy=  r^{-s}  \dashint_{B(0,1) }\dashint_{B(0,1) }R_s (x-y)\, dx\, dy,$$
and
$$\dashint_{B(0,r) }\dashint_{B(0,r) }R_0 (x-y)\, dx\, dy= \log \frac{1}{r}+ \dashint_{B(0,1) }\dashint_{B(0,1) }R_s (x-y)\, dx\, dy,$$
where we used that $|B(0,1)|=\omega_{d-1}/d.$ Then, since $\frac{1}{2}|B(0,r)|\le \sigma(D(n,r))\le |B(0,r)|$, we conclude that, for $0<s<d$,
\begin{equation}			\label{low:estimate}
 C \frac{N^{s/d}}{\epsilon^s}\le \dashint_{D}\dashint_{D} R_s(x,y)\,d\sigma(x)\,d\sigma(y)\le C^{-1} \frac{N^{s/d}}{\epsilon^s}
\end{equation}
for some constant $C >0$ depending only on $s$ and $d.$ 
In the case $s=0$, we get that
$$\dashint_{D}\dashint_{D} R_0(x,y)\,d\sigma(x)\,d\sigma(y)=\frac{1}{d} \log N +C+o(1),\;\;N \to +\infty,$$
where $C\in \R$ depends only on $d.$
\qed

\proof[Proof of Theorem \ref{teo_sobolev_discrepancy}] 
We first show that the Sobolev discrepancy of the measures associated to any set of $N$ points is bounded below. 
  Given $X_N=\{x_1,\dots , x_N \}\subset \sph^d$ and $\epsilon>0$ define the measures
$$\mu_i=\frac{\chi_{D_i}}{\sigma(D_i)} \,\sigma,\qquad\mu_{X_N,\epsilon}=\frac{1}{N}\sum_{i=1}^N \mu_i -\frac{\sigma}{\omega_{d}},$$ 
where $D_j=D_{\epsilon N^{-1/d}}(x_j).$
Since the $\mu_i$ are probability measures,
we have

\begin{equation}    \label{second:low:estimate}
\begin{split}
 \mathcal{E}_s(N) & \le \int_{\sph^d}\dots \int_{\sph^d} \Big( \sum_{i\neq j}R_s(x_i,x_j)  \Big) \,d\mu_1(x_1)\dots d\mu_N(x_N)
\\
&
=
\sum_{i\neq j}\iint R_{s}(x,y)\,d\mu_i(x)\,d\mu_j(y).
\end{split}
\end{equation}
Combining (\ref{second:low:estimate}), the lower estimate for the minimal energy in (\ref{knownboundsenergy}), (\ref{knownboundsenergy2}), Proposition 
\ref{prop_decomposition}, and Lemma \ref{lemma_equivalence} we get the lower bound
\begin{equation}  \label{bound:name}
 D_{s,d}^\epsilon(X_N)^2\ge (-c+C\epsilon^{-s})N^{-1+\frac{s}{d}},
\end{equation}
where $c>0$ is the constant in (\ref{knownboundsenergy}) and $C>0$ is the constant in (\ref{low:estimate}).
Observe that the bound in (\ref{bound:name}) is not trivial  for $\epsilon$ small enough.

For the upper bound we use again Proposition \ref{prop_decomposition}, Corollary \ref{corollary_bounds_on_means}, the
upper estimates for the minimal energy in (\ref{knownboundsenergy}), (\ref{knownboundsenergy2}) and Lemma \ref{lemma_equivalence}. We obtain that, for all $0<\epsilon<\epsilon_0(s,d)$ and $0<s<d$,
$$D_{s,d}^\epsilon(X_N)^2\le C \epsilon^2 N^{-2/d}+(C+C\epsilon^{-s}+C\epsilon^2)N^{-1+\frac{s}{d}},$$
and for $s=0$ and $d>2$ 
$$D_{0,d}^\epsilon(X_N)^2\le (C \epsilon^2+C) N^{-2/d}.$$
\qed

\begin{remark}   \label{remark_historico}
{\em 
In the manuscript \cite{Wolff}, Wolff uses the asymptotic expansion of the discrete minimal energy 
\begin{equation}   				\label{new_bound_energy_log}
\sum_{i\ne j}\log \frac{1}{|x_i-x_j|}=\frac{N^2}{(4\pi)^2}\int_{\sph^2}\!\int_{\sph^2}
\log \frac{1}{|x-y|}\,d\sigma(x)\,d\sigma(y)-\frac{N}{2} \log N+O(N),
\end{equation}
as $N\to +\infty,$ which it seems it was not known at that time. In fact, in the manuscript Wolff mentions that he borrows the direction $\ge$ in (\ref{new_bound_energy_log}) from 
Elkies \cite[page 150]{Lan88} and proves the other, hence the  manuscript must precede 
Wagner's bound \cite{Wag92}. This agrees with the information we have from Eremenko about Wolff giving him the manuscript around 1992. 
We will schetch now the main ideas in the manuscript to prove the inequality $\le$ in (\ref{new_bound_energy_log}).
First, Wolff constructs area regular partitions on the sphere with pieces satisfying the Poincar\'e inequality. He calls
a set of points {\em allowable} if it is defined by taking one point on each of these pieces of the area regular partition.
Using Poincar\'e inequality he proves that allowable sets have minimal Sobolev discrepancy, i.e., of order $N^{-1}$. Finally, by using the case 
$s=0$ and $d=2$ of the decomposition 
in Lemma \ref{lemma:decomposition} and Lemma \ref{lemma_mean}, he proves that
allowable sets (and therefore Fekete points too) have logarithmic energy bounded above by the right hand side of (\ref{new_bound_energy_log}).
}\end{remark}


\section{From the Sobolev discrepancy to the spherical cap discrepancy}\label{s5}

The final ingredient to prove Theorem \ref{teo:main} is to estimate the spherical cap discrepancy using the bounds on the Sobolev discrepancy and a suitable test function. The 
main difficulty compared to Wolff's case \cite{Wolff}
is that we need the following result on interpolation to be able to estimate the test function.

\begin{lemma}					\label{lemma:interpolation}.
If $0\leq s<d$, there exists $C>0$ only depending on 
$d$ and $s$ such that
\begin{equation}\label{holder estimates 2}
\Big|\int fg\,d\sigma\Big|\leq C\|f\|^{\theta}_{\mathbb{H}^{[(d-s)/2]+1}(\sph^d)}
\|f\|^{1-\theta}_{\mathbb{H}^{[(d-s)/2]}(\sph^d)}  \|g \|_{\mathbb{H}^{\frac{s-d}{2}}(\sph^d)}
\end{equation}
for all $f\in \CC^\infty(\sph^d)$ and $g\in L^2(\sph^d)$, where 
$\theta=(d-s)/2-[(d-s)/2]\in[0,1)$.
\end{lemma}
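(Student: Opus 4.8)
The plan is to reduce the statement to a standard interpolation inequality between Sobolev norms of consecutive integer orders. Write $m:=[(d-s)/2]$, so that $(d-s)/2=m+\theta$ with $\theta\in[0,1)$, and set $r:=(d-s)/2\ge0$. We may assume $f\not\equiv0$, since otherwise the inequality is trivial. By the very definition of the dual Sobolev norm $\|\cdot\|_{\mathbb{H}^{-r}(\sph^d)}=\|\cdot\|_{\mathbb{H}^{(s-d)/2}(\sph^d)}$, applied to the smooth function $f/\|f\|_{\mathbb{H}^{r}(\sph^d)}$, which has unit $\mathbb{H}^{r}$-norm, we obtain at once the duality bound
\[
\Big|\int fg\,d\sigma\Big|\le\|f\|_{\mathbb{H}^{r}(\sph^d)}\,\|g\|_{\mathbb{H}^{\frac{s-d}{2}}(\sph^d)}.
\]
Hence it suffices to prove the interpolation estimate $\|f\|_{\mathbb{H}^{r}(\sph^d)}\le C\,\|f\|^{\theta}_{\mathbb{H}^{m+1}(\sph^d)}\,\|f\|^{1-\theta}_{\mathbb{H}^{m}(\sph^d)}$ with $C$ depending only on $d$ and $s$.

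Next I would prove this last inequality on the Fourier side. Since $m+\theta=\theta(m+1)+(1-\theta)m$, one has the exact factorization $(1+\ell^2)^{m+\theta}=\big[(1+\ell^2)^{m+1}\big]^{\theta}\big[(1+\ell^2)^{m}\big]^{1-\theta}$, and (after possibly replacing the $\mathbb{H}^{\rho}$-weights by the comparable expressions allowed in the text, with constants depending only on the order) I would apply Hölder's inequality with conjugate exponents $1/\theta$ and $1/(1-\theta)$ to
\[
\sum_{\ell,k}(1+\ell^2)^{m+\theta}|f_{\ell,k}|^2
=\sum_{\ell,k}\big[(1+\ell^2)^{m+1}|f_{\ell,k}|^2\big]^{\theta}\,\big[(1+\ell^2)^{m}|f_{\ell,k}|^2\big]^{1-\theta},
\]
which bounds the left-hand side by $\|f\|^{2\theta}_{\mathbb{H}^{m+1}(\sph^d)}\,\|f\|^{2(1-\theta)}_{\mathbb{H}^{m}(\sph^d)}$ up to a constant. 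Taking square roots yields the claimed interpolation estimate. The degenerate case $\theta=0$ (that is, $d-s$ even) requires no interpolation at all: then $r=m$ and the estimate is trivial, with the convention $\|f\|^{0}_{\mathbb{H}^{m+1}(\sph^d)}=1$ for $f\not\equiv0$.

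I expect no serious obstacle here. The only points needing a little care are the passage through the definition of the dual norm — immediate once $f$ is normalized — and the bookkeeping of the comparability constants among the equivalent expressions for $\|\cdot\|_{\mathbb{H}^{\rho}(\sph^d)}$, none of which depends on $f$ or $g$. It is worth noting why the statement is phrased with the two consecutive integer orders $m$ and $m+1$ rather than with the single fractional order $r=(d-s)/2$: in Section \ref{s5} the norms $\|\cdot\|_{\mathbb{H}^{m}(\sph^d)}$ and $\|\cdot\|_{\mathbb{H}^{m+1}(\sph^d)}$ of an explicit test function will be estimated through the pure-derivative expressions of Lemma \ref{expressions in pure derivatives}, whereas a fractional-order Sobolev norm would be considerably harder to control directly.
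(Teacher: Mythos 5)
Your proof is correct and follows essentially the same route as the paper's: pass to the Fourier side, separate $f$ and $g$, and interpolate the fractional Sobolev norm of $f$ between the integer orders $m=[(d-s)/2]$ and $m+1$ via H\"older. Your version is marginally cleaner in two cosmetic respects---you invoke the definition of the dual norm directly rather than spelling out Cauchy--Schwarz on the coefficients, and you exploit the exact multiplicative identity $(1+\ell^2)^{m+\theta}=\big[(1+\ell^2)^{m+1}\big]^{\theta}\big[(1+\ell^2)^{m}\big]^{1-\theta}$, whereas the paper works with the comparable weight $1+\ell^{d-s}$ and so needs the auxiliary bound $1+\ell^{d-s}\leq(1+\ell^{m})^2(1+\ell^{2\delta})$ before applying H\"older---but the underlying argument is the same.
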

\begin{proof}
We decompose $f$ and $g$ in spherical harmonics as 
\begin{equation}
f=\sum_{\ell,k}f_{\ell,k}\,Y_{\ell,k},\qquad
g=\sum_{\ell,k}g_{\ell,k}\,Y_{\ell,k},
\end{equation}
where
$f_{\ell,k}=\int f\,Y_{\ell,k}\,d\sigma$ and
$g_{\ell,k}=\int g\,Y_{\ell,k}\,d\sigma$. Then, using Cauchy-Schwarz inequality, \eqref{symbol Riesz3} and \eqref{symbol Riesz1}, we easily get
\begin{equation}\label{holder estimates eq1}
\begin{split}
\Big|\int fg\,d\sigma\Big|^2&\leq\Big(\sum_{\ell,k}|f_{\ell,k}||g_{\ell,k}|\Big)^2
\lesssim \sum_{\ell,k}(1+\ell^{d-s})|f_{\ell,k}|^2\sum_{\ell,k}A_{\ell,s}|g_{\ell,k}|^2\\
&= \|g \|_{\mathbb{H}^{\frac{s-d}{2}}(\sph^d)}^2   \sum_{\ell,k}(1+\ell^{d-s})|f_{\ell,k}|^2.
\end{split}
\end{equation}
For a general $s$, the last sum above may correspond to a Sobolev norm of noninteger order. We are going to estimate it, by interpolation, in terms of Sobolev norms of integer order. Let $m\in\N\cup\{0\}$ be such that $s\in[d-2(m+1),d-2m)$ and set $\delta=(d-s)/2-m$, hence $0<\delta\leq1$ and $d-s=2(m+\delta)$. 

Assume first that $\delta=1$, thus $d-s=2(m+1)$ and 
\begin{equation}
\sum_{\ell,k}(1+\ell^{d-s})|f_{\ell,k}|^2=\sum_{\ell,k}(1+\ell^{2(m+1)})|f_{\ell,k}|^2
\approx\|f\|^2_{\mathbb{H}^{m+1}(\sph^d) }=\|f\|^2_{\mathbb{H}^{(d-s)/2}(\sph^d)}.
\end{equation}
With this at hand, \eqref{holder estimates eq1} leads to \eqref{holder estimates 2} when $\theta=0$.

Assume now that $0<\delta<1$. Using that 
$\ell\geq0$, we can estimate 
\begin{equation}\label{holder estimates eq2}
1+\ell^{d-s}=1+\ell^{2(m+\delta)}\leq(1+\ell^{m})^2(1+\ell^{2\delta}).
\end{equation}
Since $1<1/\delta<+\infty$, a combination of \eqref{holder estimates eq2} with  
H\"older inequality yields
\begin{equation}
\begin{split}
\sum_{\ell,k}(1+\ell^{d-s})|f_{\ell,k}|^2
&\leq\sum_{\ell,k}\big((1+\ell^{m})|f_{\ell,k}|\big)^{2\delta}
(1+\ell^{2\delta})\big((1+\ell^{m})|f_{\ell,k}|\big)^{2-2\delta}\\
&\leq\Big(\sum_{\ell,k}\big((1+\ell^{m})|f_{\ell,k}|\big)^{2}
(1+\ell^{2\delta})^{\frac{1}{\delta}}\Big)^{\delta}
\Big(\sum_{\ell,k}\big((1+\ell^{m})|f_{\ell,k}|\big)^{2}
\Big)^{1-\delta}\\
&\lesssim\Big(\sum_{\ell,k}\big(1+\ell^{2(m+1)}\big)|f_{\ell,k}|^2\Big)^{\delta}
\Big(\sum_{\ell,k}\big(1+\ell^{2m}\big)|f_{\ell,k}|^2\Big)^{1-\delta}\\
&\approx\|f\|^{2\delta}_{\mathbb{H}^{m+1}(\sph^d)}\|f\|^{2-2\delta}_{\mathbb{H}^{m}(\sph^d)}. 
\end{split}
\end{equation}
The fact that $\delta<1$ leads to $m=[(d-s)/2]$, and therefore we conclude that
\begin{equation}
\Big(\sum_{\ell,k}(1+\ell^{d-s})|f_{\ell,k}|^2\Big)^{1/2}
\lesssim\|f\|^{(d-s)/2-[(d-s)/2]}_{\mathbb{H}^{[(d-s)/2]+1}(\sph^d)}
\|f\|^{1+[(d-s)/2]-(d-s)/2}_{\mathbb{H}^{[(d-s)/2]}(\sph^d)}.
\end{equation}
This, together with \eqref{holder estimates eq1}, proves \eqref{holder estimates 2} when $0<\theta<1$.
\end{proof}

Finally, the following proposition combined with Theorem \ref{teo_sobolev_discrepancy} proves Theorem \ref{teo:main}.

\begin{proposition}					\label{sobolev_SCdiscrepancy}
Given $0\leq s<d$, $\epsilon_0>0$ and $C_1>0$, there exists $C_2>0$ only depending on $d$, $s$, $\epsilon_0$, and $C_1$ 
such that, for every set
$X_N=\{x_1,\dots , x_N\} \subset\sph^d$ with Sobolev discrepancy
\begin{equation}\label{sph cap discr main assumption}
D_{s,d}^{\epsilon_0}(X_N)  \le C_1 \big(N^{-\frac{1}{d}}+N^{-\frac{1}{2}+\frac{s}{2d}}\big),
\end{equation}
the spherical cap discrepancy of $X_N$ satisfies
\begin{equation}\label{sph cap discr main estimate}  
\sup_D\, \Bigl| \frac{\# (X_N\cap D)}{N}- 
\frac{\sigma(D)}{\sigma(\sph^d)} \Bigr|
\leq C_2
\Big( \chi_{[0,d-2]}(s) N^{-\frac{2}{d(d-s+1)}}
+\chi_{(d-2,d)}(s) N^{-\frac{2(d-s)}{d(d-s+4)}} \Big),
\end{equation}
where the supremum in the left hand side of \eqref{sph cap discr main estimate} runs over all spherical caps $D\subset \sph^d.$ 
\end{proposition}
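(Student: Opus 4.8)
The plan is to bound, for each spherical cap $D=D_r(x_0)\subset\sph^d$, the quantity $\nu(D):=\frac1N\#(X_N\cap D)-\widetilde{\sigma}(D)$ from above and below by testing the measure $\mu_{X_N,\epsilon_0}$ against a smooth bump adapted to $D$ and then using the Sobolev duality together with the hypothesis \eqref{sph cap discr main assumption}. I may assume $N$ is large (for bounded $N$ the left hand side of \eqref{sph cap discr main estimate} is $\le1$ and there is nothing to prove) and, by a harmless reduction, that $r$ is bounded away from $0$ and from the diameter of $\sph^d$, since otherwise $\widetilde{\sigma}(D)$ or $\widetilde{\sigma}(\sph^d\setminus D)$ is already of the desired order.

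Fix a scale $\delta>0$, to be optimised at the end. I would construct $\psi^{\pm}\in\CC^\infty(\sph^d)$ with $0\le\psi^{\pm}\le1$ and $\|\nabla^i\psi^{\pm}\|_{L^\infty(\sph^d)}\lesssim\delta^{-i}$ for all $i\ge0$, chosen so that $\psi^{+}\equiv1$ on $D_{r+\epsilon_0N^{-1/d}}(x_0)$ and $\supp\psi^{+}\subset D_{r+\epsilon_0N^{-1/d}+\delta}(x_0)$, while $\psi^{-}\equiv1$ on $D_{r-\epsilon_0N^{-1/d}-\delta}(x_0)$ and $\supp\psi^{-}\subset D_{r-\epsilon_0N^{-1/d}}(x_0)$. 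Because each disc $D_j=D_{\epsilon_0N^{-1/d}}(x_j)$ with $x_j\in D$ is contained in $D_{r+\epsilon_0N^{-1/d}}(x_0)$, and each $D_j$ with $x_j\notin D$ is contained in $\sph^d\setminus D_{r-\epsilon_0N^{-1/d}}(x_0)$, we get $\chi_D(x_j)\le\dashint_{D_j}\psi^{+}\,d\sigma$ and $\dashint_{D_j}\psi^{-}\,d\sigma\le\chi_D(x_j)$ for every $j$. Averaging over $j$ and using that $\mu_{X_N,\epsilon_0}=\bigl(\tfrac1N\sum_j\tfrac{\chi_{D_j}}{\sigma(D_j)}-\tfrac1{\omega_d}\bigr)\sigma$ then yields
\[
|\nu(D)|\le\max_{\pm}\Bigl|\int_{\sph^d}\psi^{\pm}\,d\mu_{X_N,\epsilon_0}\Bigr|+C\bigl(\epsilon_0N^{-1/d}+\delta\bigr),
\]
the error term being $\bigl|\tfrac1{\omega_d}\int\psi^{\pm}\,d\sigma-\widetilde{\sigma}(D)\bigr|$, i.e. the surface measure of the transition annuli. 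Note that this avoids any $\|\nabla\psi^{\pm}\|_{L^\infty}$ term, which is crucial.

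The core estimate is for $\bigl|\int\psi^{\pm}\,d\mu_{X_N,\epsilon_0}\bigr|$. Writing $\mu_{X_N,\epsilon_0}=h\sigma$ with $h\in L^2(\sph^d)$, Lemma \ref{lemma:interpolation} gives
\[
\Bigl|\int_{\sph^d}\psi^{\pm}h\,d\sigma\Bigr|\lesssim\|\psi^{\pm}\|_{\mathbb{H}^{m_0+1}(\sph^d)}^{\theta}\,\|\psi^{\pm}\|_{\mathbb{H}^{m_0}(\sph^d)}^{1-\theta}\,D_{s,d}^{\epsilon_0}(X_N),
\]
with $m_0=[(d-s)/2]$ and $\theta=(d-s)/2-m_0\in[0,1)$. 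Using Lemma \ref{expressions in pure derivatives} to express $\|\cdot\|_{\mathbb{H}^{m_0}}$ and $\|\cdot\|_{\mathbb{H}^{m_0+1}}$ through $L^2$ norms of pure derivatives, together with $\|\nabla^i\psi^{\pm}\|_{L^\infty}\lesssim\delta^{-i}$ and the fact that these derivatives are supported in an annulus of surface measure $\lesssim\delta$, one obtains $\|\psi^{\pm}\|_{\mathbb{H}^{m_0}(\sph^d)}\lesssim1+\delta^{1/2-m_0}$ and $\|\psi^{\pm}\|_{\mathbb{H}^{m_0+1}(\sph^d)}\lesssim\delta^{-1/2-m_0}$. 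Feeding in \eqref{sph cap discr main assumption} and distinguishing the case $0\le s\le d-2$ (where $m_0\ge1$, so the order-$0$ term is negligible and $\|\psi^{\pm}\|_{\mathbb{H}^{m_0}}\lesssim\delta^{1/2-m_0}$, while $N^{-1/d}$ is the leading term of \eqref{sph cap discr main assumption}) from the case $d-2<s<d$ (where $m_0=0$, so $\|\psi^{\pm}\|_{\mathbb{H}^{0}}\lesssim1$ and $N^{-1/2+s/(2d)}$ leads), a short computation of exponents gives
\[
\Bigl|\int_{\sph^d}\psi^{\pm}\,d\mu_{X_N,\epsilon_0}\Bigr|\lesssim\chi_{[0,d-2]}(s)\,\delta^{\frac{s-d+1}{2}}N^{-\frac1d}+\chi_{(d-2,d)}(s)\,\delta^{-\frac{d-s}{4}}N^{-\frac12+\frac{s}{2d}}.
\]

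Finally I would optimise $\delta$. For $0\le s\le d-2$, balancing $\delta$ against $\delta^{(s-d+1)/2}N^{-1/d}$ forces $\delta\approx N^{-2/(d(d-s+1))}$ and produces the bound $N^{-2/(d(d-s+1))}$; for $d-2<s<d$, balancing $\delta$ against $\delta^{-(d-s)/4}N^{-1/2+s/(2d)}$ forces $\delta\approx N^{-2(d-s)/(d(d-s+4))}$ and produces $N^{-2(d-s)/(d(d-s+4))}$. In both cases the residual $\epsilon_0N^{-1/d}$ is of smaller order than the chosen bound, since $\tfrac{2}{d(d-s+1)}\le\tfrac1d$ when $d-s\ge1$ and $\tfrac{2(d-s)}{d(d-s+4)}\le\tfrac1d$ when $d-s\le4$, both of which hold in the respective ranges; taking the supremum over caps $D$ gives \eqref{sph cap discr main estimate}. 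The main obstacle I anticipate is the non-integrality of $(d-s)/2$: one cannot pair $\|\mu_{X_N,\epsilon_0}\|_{\mathbb{H}^{(s-d)/2}}$ directly with a conveniently estimable Sobolev norm of the test function, and it is precisely Lemma \ref{lemma:interpolation} that lets us route the estimate through the integer orders $m_0$ and $m_0+1$, whose norms are then computed elementarily via Lemma \ref{expressions in pure derivatives}. A secondary nuisance is the bookkeeping that ensures enlarging/shrinking $D$ by the disc radius $\epsilon_0N^{-1/d}$ and by the transition width $\delta$ contributes only negligible surface-measure errors, so that the final choice of $\delta$ is governed solely by the two competing terms above.
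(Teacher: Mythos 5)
Your proof is correct and follows essentially the same route as the paper's: construct inner/outer smooth cutoffs for the cap, bound the test-pairing via Lemma \ref{lemma:interpolation} and the Sobolev-norm estimates of the cutoffs (with the same exponents $\delta^{1/2-m}$), and optimize the transition width. The only cosmetic difference is that you write the inflation radius as $\epsilon_0 N^{-1/d}+\delta$ and optimize over $\delta$, whereas the paper uses a single parameter $\epsilon>\epsilon_0 N^{-1/d}$, which amounts to the same thing. One small inaccuracy: the reduction ``assume $r$ bounded away from $0$'' is not justified the way you state it (small $\widetilde\sigma(D)$ does not a priori control $\#(X_N\cap D)/N$); but it is also unnecessary, since your $\psi^+$ already gives the one-sided bound for all $r$, and for $r\lesssim\epsilon_0 N^{-1/d}+\delta$ the other side is trivial because $\nu(D)\geq-\widetilde\sigma(D)\gtrsim-\delta$.
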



\begin{proof}
Let $D=D_r(z)$ for $z\in \sph^d$ and $r>0.$ Given $0<\epsilon<r/2$ let 
$f^\pm_\epsilon\in \CC^\infty(\sph^d)$ be such that $0\leq f_\epsilon^\pm\leq1$,
\begin{equation}\label{test functions defi}
f^+_\epsilon(x)=
\begin{cases}
1, &\text{if }|x-z|<r+\epsilon,\\
0, &\text{if }|x-z|>r+2\epsilon,
\end{cases}\qquad
f^-_\epsilon(x)=
\begin{cases}
1, &\text{if }|x-z|<r-2\epsilon,\\
0, &\text{if }|x-z|>r-\epsilon.
\end{cases}
\end{equation}
Note that 
\begin{equation}\label{sph cap discr main estimate sobolev test2}
\sigma(D)-C\epsilon\leq\int f_\epsilon^-\,d\sigma
\leq\int f_\epsilon^+\,d\sigma\leq \sigma(D)+C\epsilon
\end{equation}
for some constant $C>0$ only depending on $d$.
It is not hard to see that $f^\pm_\epsilon$ can be taken in such a way that
\begin{equation}\label{sph cap discr main estimate sobolev test}
\|f^\pm_\epsilon\|_{\mathbb{H}^{m}(\sph^d)}
\leq C\big(1+\epsilon^{-m+\frac{1}{2}}\big)\qquad\text{for all $m\in\N\cup\{0\}$.}
\end{equation}
For example, take a smooth function $\phi:\R\to\R$ such that 
$\chi_{(-\infty,0]}\leq\phi\leq \chi_{(-\infty,1)}$ and set 
$f^+_\epsilon(x)=\phi\big(\epsilon^{-1}(|x-z|-r-\epsilon)\big)$ and
$f^-_\epsilon(x)=\phi\big(\epsilon^{-1}(|x-z|-r+2\epsilon))\big)$. We leave the details of checking \eqref{sph cap discr main estimate sobolev test} for the reader.

Observe that if $\epsilon> \epsilon_0 N^{-1/d}$ then $f^+_\epsilon\equiv 1$ in 
$D_{\epsilon_0 N^{-1/d}}(x_j)$ for all $x_j\in D$.
Recall also form Definition \ref{defi sobolev discrep mu} that $\mu_{X_N,\epsilon_0}= h\sigma$ with 
\begin{equation}\label{sph cap discr main estimate sobolev test5}
h=\frac{1}{N}\sum_{j=1}^N \frac{\chi_{D_j}}{\sigma(D_j)}-\frac{1}{\sigma(\sph^d)},
\qquad\text{$D_j=D_{\epsilon_0 N^{-1/d}}(x_j)$ for all $j=1,\ldots,N$.}
\end{equation}
Therefore,
\begin{equation}\label{sph cap discr main estimate sobolev test3}
\begin{split}
\frac{\# \big(D \cap X_N \big)}{N}
&\leq\frac{1}{N}\sum_{j=1}^N\frac{1}{\sigma(D_j)}\int_{D_j}f_\epsilon^+\,d\sigma
=\int f_\epsilon^+\,d\mu_{X_N,\epsilon_0}+\frac{1}{\sigma(\sph^d)}\int f_\epsilon^+\,d\sigma.
\end{split}
\end{equation}
Similarly, since $f^-_\epsilon\leq 1$ and it is supported on the spherical cap or radius $r-\epsilon$ centered at $z$, we deduce that
\begin{equation}\label{sph cap discr main estimate sobolev test4}
\begin{split}
\frac{\# \big(D \cap X_N \big)}{N}
&\geq\frac{1}{N}\sum_{j=1}^N\frac{1}{\sigma(D_j)}\int_{D_j}f_\epsilon^-\,d\sigma
=\int f_\epsilon^-\,d\mu_{X_N,\epsilon_0}+\frac{1}{\sigma(\sph^d)}\int f_\epsilon^-\,d\sigma.
\end{split}
\end{equation} 

We have all the ingredients to prove \eqref{sph cap discr main estimate}. On one hand, if we first combine \eqref{sph cap discr main estimate sobolev test3} and
\eqref{sph cap discr main estimate sobolev test2}, and then we use 
that $\mu_{X_N,\epsilon_0}= h\sigma$, \eqref{holder estimates 2}, \eqref{sph cap discr main estimate sobolev test} and \eqref{sph cap discr main assumption},
we get
\begin{equation}
\begin{split}
\frac{\# \big(D \cap X_N \big)}{N} & -\frac{\sigma(D)}{\sigma(\sph^d)}
\leq\int f_\epsilon^+\,d\mu_{X_N,\epsilon_0}+C\epsilon
=\int f_\epsilon^+h\,d\sigma+C\epsilon\\
&\leq C\,\|f_\epsilon^+\|^{(d-s)/2-[(d-s)/2]}_{\mathbb{H}^{[(d-s)/2]+1}(\sph^d)  }
\|f_\epsilon^+\|^{1-(d-s)/2+[(d-s)/2]}_{ \mathbb{H}^{[(d-s)/2]}(\sph^d)   } \|h\|_{\mathbb{H}^{(s-d)/2}(\sph^d) }+C\epsilon\\
&\leq C\big(1+\epsilon^{-[\frac{d-s}{2}]-\frac{1}{2}}\big)^{\frac{d-s}{2}-[\frac{d-s}{2}]}
\big(1+\epsilon^{-[\frac{d-s}{2}]+\frac{1}{2}}\big)^{1-\frac{d-s}{2}+[\frac{d-s}{2}]}\\
&\hskip160pt\times\big(N^{-\frac{2}{d}}+N^{-1+\frac{s}{d}}\big)^{\frac{1}{2}}
+C\epsilon\\
&\leq C\,\frac{1+\epsilon^{-([\frac{d-s}{2}]-\frac{1}{2})(1-\frac{d-s}{2}+[\frac{d-s}{2}])}}
{\epsilon^{([\frac{d-s}{2}]+\frac{1}{2})(\frac{d-s}{2}-[\frac{d-s}{2}])}}
\big(N^{-\frac{1}{d}}+N^{-\frac{1}{2}+\frac{s}{2d}}\big)
+C\epsilon.
\end{split}
\end{equation}

On the other hand, combining \eqref{sph cap discr main estimate sobolev test4} and
\eqref{sph cap discr main estimate sobolev test2}, and then using
that $\mu_{X_N,\epsilon_0}= h\sigma$, \eqref{holder estimates 2}, \eqref{sph cap discr main estimate sobolev test} and \eqref{sph cap discr main assumption},
we obtain
\begin{equation}
\begin{split}
\frac{\# \big(D \cap X_N \big)}{N}  -\frac{\sigma(D)}{\sigma(\sph^d)}
&\geq\int f_\epsilon^-\,d\mu_{X_N,\epsilon_0}-C\epsilon
=\int f_\epsilon^-h\,d\sigma-C\epsilon\\
&\geq -C\,\frac{1+\epsilon^{-([\frac{d-s}{2}]-\frac{1}{2})(1-\frac{d-s}{2}+[\frac{d-s}{2}])}}
{\epsilon^{([\frac{d-s}{2}]+\frac{1}{2})(\frac{d-s}{2}-[\frac{d-s}{2}])}}
\big(N^{-\frac{1}{d}}+N^{-\frac{1}{2}+\frac{s}{2d}}\big)
-C\epsilon.
\end{split}
\end{equation}
In conclusion, we obtain the estimate
\begin{equation}\label{almost there}
\begin{split}
\Big|\frac{\# \big(D \cap X_N \big)}{N} & -\frac{\sigma(D)}{\sigma(\sph^d)} \Big|\\
&\leq C\,\frac{1+\epsilon^{-([\frac{d-s}{2}]-\frac{1}{2})(1-\frac{d-s}{2}+[\frac{d-s}{2}])}}
{\epsilon^{([\frac{d-s}{2}]+\frac{1}{2})(\frac{d-s}{2}-[\frac{d-s}{2}])}}
\big(N^{-\frac{1}{d}}+N^{-\frac{1}{2}+\frac{s}{2d}}\big)+C\epsilon.
\end{split}
\end{equation}
In order to deal with the right hand side of \eqref{almost there}, we consider two different cases: $d-2<s<d$ and $0\leq s\leq d-2$.

Assume first that $d-2<s<d$, thus $[(d-s)/2]=0$. Then, \eqref{almost there} leads to
\begin{equation}\label{almost there 1}
\begin{split}
\Big|\frac{\# \big(X_N \cap  D \big)}{N} & -\frac{\sigma(D)}{\sigma(\sph^d)} \Big|
\leq C\epsilon^{-\frac{d-s}{4}}N^{-\frac{d-s}{2d}}+C\epsilon.
\end{split}
\end{equation}
Remember that this estimate holds whenever $\epsilon>\epsilon_0 N^{-1/d}$, hence we can take
$$\epsilon=N^{-\frac{2(d-s)}{d(d-s+4)}}$$ for all $N$ big enough, and then 
\eqref{almost there 1} yields \eqref{sph cap discr main estimate}.

Let us deal now with the case $0\leq s\leq d-2$. From \eqref{almost there} we get
\begin{equation}\label{almost there 2}
\begin{split}
\Big|\frac{\# \big(X_N \cap  D \big)}{N} & -\frac{\sigma(D)}{\sigma(\sph^d)} \Big|
\leq C\epsilon^{-\frac{d-s-1}{2}}N^{-\frac{1}{d}}+C\epsilon.
\end{split}
\end{equation}
As before, this holds whenever $\epsilon>\epsilon_0 N^{-1/d}$, hence we can take $\epsilon=N^{-\frac{2}{d(d-s+1)}}$ for all $N$ big enough, and then 
\eqref{almost there 2} yields \eqref{sph cap discr main estimate}.
\end{proof}

\end{document}